\numberwithin{equation}{section}
\begin{document}
	\theoremstyle{plain}
	\newtheorem{thm}{Theorem}[section]
	\newtheorem{lem}[thm]{Lemma}
	\newtheorem{cor}[thm]{Corollary}
	\newtheorem{cor*}[thm]{Corollary*}
	\newtheorem{prop}[thm]{Proposition}
	\newtheorem{prop*}[thm]{Proposition*}
	\newtheorem{conj}[thm]{Conjecture}	
	\theoremstyle{definition}
	\newtheorem{construction}{Construction}
	\newtheorem{notations}[thm]{Notations}
	\newtheorem{question}[thm]{Question}
	\newtheorem{prob}[thm]{Problem}
	\newtheorem{rmk}[thm]{Remark}
	\newtheorem{remarks}[thm]{Remarks}
	\newtheorem{defn}[thm]{Definition}
	\newtheorem{claim}[thm]{Claim}
	\newtheorem{assumption}[thm]{Assumption}
	\newtheorem{assumptions}[thm]{Assumptions}
	\newtheorem{properties}[thm]{Properties}
	\newtheorem{exmp}[thm]{Example}
	\newtheorem{comments}[thm]{Comments}
	\newtheorem{blank}[thm]{}
	\newtheorem{observation}[thm]{Observation}
	\newtheorem{defn-thm}[thm]{Definition-Theorem}
	\newtheorem*{Setting}{Setting}

	\newcommand{\sA}{\mathscr{A}}	
	\newcommand{\sB}{\mathscr{B}}
	\newcommand{\sC}{\mathscr{C}}
	\newcommand{\sD}{\mathscr{D}}
	\newcommand{\sE}{\mathscr{E}}
	\newcommand{\sF}{\mathscr{F}}
	\newcommand{\sG}{\mathscr{G}}
	\newcommand{\sH}{\mathscr{H}}
	\newcommand{\sI}{\mathscr{I}}
	\newcommand{\sJ}{\mathscr{J}}
	\newcommand{\sK}{\mathscr{K}}
	\newcommand{\sL}{\mathscr{L}}
	\newcommand{\sM}{\mathscr{M}}
	\newcommand{\sN}{\mathscr{N}}
	\newcommand{\sO}{\mathscr{O}}
	\newcommand{\sP}{\mathscr{P}}
	\newcommand{\sQ}{\mathscr{Q}}
	\newcommand{\sR}{\mathscr{R}}
	\newcommand{\sS}{\mathscr{S}}
	\newcommand{\sT}{\mathscr{T}}
	\newcommand{\sU}{\mathscr{U}}
	\newcommand{\sV}{\mathscr{V}}
	\newcommand{\sW}{\mathscr{W}}
	\newcommand{\sX}{\mathscr{X}}
	\newcommand{\sY}{\mathscr{Y}}	\newcommand{\sZ}{\mathscr{Z}}
	\newcommand{\bZ}{\mathbb{Z}}
	\newcommand{\bN}{\mathbb{N}}
	\newcommand{\bQ}{\mathbb{Q}}
	\newcommand{\bC}{\mathbb{C}}
	\newcommand{\bR}{\mathbb{R}}
	\newcommand{\bH}{\mathbb{H}}
	\newcommand{\bD}{\mathbb{D}}
	\newcommand{\bE}{\mathbb{E}}
	\newcommand{\bV}{\mathbb{V}}
	\newcommand{\cV}{\mathcal{V}}
	\newcommand{\cF}{\mathcal{F}}
	\newcommand{\bfM}{\mathbf{M}}
	\newcommand{\bfN}{\mathbf{N}}
	\newcommand{\bfX}{\mathbf{X}}
	\newcommand{\bfY}{\mathbf{Y}}
	\newcommand{\spec}{\textrm{Spec}}
	\newcommand{\dbar}{\bar{\partial}}
	\newcommand{\ddbar}{\partial\bar{\partial}}
	\newcommand{\redref}{{\color{red}ref}}
	\title[Koll\'ar's package for polystable parabolic Higgs bundles] {Koll\'ar's package for polystable locally abelian parabolic Higgs bundles}
	
	\author[Junchao Shentu]{Junchao Shentu}
	\email{stjc@ustc.edu.cn}
	\address{School of Mathematical Sciences, University of Science and Technology of China, Hefei, 230026, China}
	\author[Chen Zhao]{Chen Zhao}
	\email{czhao@ustc.edu.cn}
	\address{School of Mathematical Sciences, University of Science and Technology of China, Hefei, 230026, China}
	\begin{abstract}
		We generalize Koll\'ar's package (including torsion freeness, injectivity theorem, vanishing theorem and decomposition theorem) to polystable locally abelian parabolic Higgs bundles twisted by a multiplier ideal sheaf associated with an $\bR$-divisor. This gives a uniform treatment for various kinds of Koll\'ar's package in different topics in complex geometry. As applications, the weakly positivity (in the sense of Viehweg) and the generic vanishing property for higher direct image sheaves are deduced.
	\end{abstract}
	\maketitle
	\section{Introduction}
	Everything is defined over the complex number field $\bC$.
	Let $f:X\rightarrow Y$ be a proper surjective morphism from a projective variety $X$ to a complex space $Y$. We say that a coherent sheaf $\sF$ on $X$ satisfies \emph{Koll\'ar's package} with respect to $f$ if the following statements hold.
	\begin{description}
		\item[Torsion Freeness] $R^qf_\ast (\sF)$ is torsion free for every $q\geq0$ and vanishes if $q>\dim X-\dim Y$.
		\item[Injectivity Theorem] If $L$ is a semi-positive (Definition \ref{defn_semipositive_divisor}) holomorphic line bundle on $X$ so that $L^{\otimes l}$ admits a nonzero holomorphic global section $s$ for some $l>0$, then the canonical morphism
		$$R^qf_\ast(\times s):R^qf_\ast(\sF\otimes L^{\otimes k})\to R^qf_\ast(\sF\otimes L^{\otimes (k+l)})$$
		is injective for every $q\geq0$ and every $k\geq1$.
		\item[Vanishing Theorem] If $Y$ is a projective algebraic variety and $L$ is an ample line bundle on $Y$, then
		$$H^q(Y,R^pf_\ast(\sF)\otimes L)=0,\quad \forall q>0,\forall p\geq0.$$
		\item[Decomposition Theorem] $Rf_\ast (\sF)$ splits in $D(Y)$ the derived category of $\sO_Y$-modules, that is, 
		$$Rf_\ast(\sF)\simeq \bigoplus_{q} R^qf_\ast(\sF)[-q]\in D(Y).$$
		As a consequence, the spectral sequence
		$$E^{pq}_2:H^p(Y,R^qf_\ast(\sF))\Rightarrow H^{p+q}(X,\sF)$$
		degenerates at the $E_2$ page.
	\end{description}
	These statements date back to J. Koll\'ar \cite{Kollar1986_1,Kollar1986_2}, who proved that the dualizing sheaf $\omega_X$ satisfies Koll\'ar's package when $X$ is smooth and $Y$ is projective. Koll\'ar's results have since been generalized in two directions to aim for various geometric applications.
	
	The first direction is Koll\'ar's package for the dualizing sheaf twisted by a $\bQ$-divisor, or more generally, a multiplier ideal sheaf. 
	This particular type of Koll\'ar's package has significant applications in various areas of research. For example, E. Viehweg's work on the quasi-projective moduli of polarized manifolds \cite{Viehweg1995,Viehweg2010}, O. Fujino's project on the minimal model program for log-canonical varieties \cite{Fujino2017}, and the Koll\'ar-Kov\'acs' splitting criterion for du Bois singularities \cite{Kollar2010} all make use of this type of Koll\'ar's package. K. Takegoshi \cite{Takegoshi1995} has also established a proof of Koll\'ar's package for the dualizing sheaf twisted by a Nakano semi-positive vector bundle. In addition, S. Matsumura \cite{Matsumura2018} and Fujino-Matsumura \cite{Matsumura2021} have investigated the injectivity theorem for the dualizing sheaf twisted by a general multiplier ideal sheaf. Recently, Cao-P\u aun \cite{Paun2022,Paun2023} extended Koll\'ar's injectivity theorem to the log canonical bundle twisted by a semi-positive line bundle, thus confirming a conjecture proposed by Fujino \cite{Fujino2017}. Subsequently, Chan-Choi-Matsumura \cite{CCM2023} provided another proof and generalized Fujino's conjecture to the scenario of log canonical pairs (also refer to \cites{Matsumura2019,Matsumura2018}). However, a complete proof of Koll\'ar's package, especially the decomposition theorem, for the dualizing sheaf twisted by a multiplier ideal sheaf remains elusive.

	The other direction is to generalize Koll\'ar's package to certain Hodge-theoretic objects, such as variations of Hodge structure and Hodge modules. Let's assume that $f:X\to Y$ is a morphism between projective varieties. 
	Suppose that $\mathbb{V}$ is an $\mathbb{R}$-polarized variation of Hodge structure on some dense Zariski open subset $X^o\subset X_{\text{reg}}$ of the regular locus $X_{\text{reg}}$. In his work, M. Saito \cite{MSaito1991} constructs a coherent sheaf $S_X(\mathbb{V})$ (as the lowest Hodge piece of the Hodge module $IC_X(\mathbb{V})$) and shows that $S_X(\mathbb{V})$ satisfies Koll\'ar's package with respect to $f$. When $\mathbb{V}$ is the trivial variation of Hodge structure, $S_X(\mathbb{V})\simeq \omega_X$. Saito's work provides an affirmative answer to Koll\'ar's conjecture \cite[\S 4]{Kollar1986_2}. In addition to other deep results on Hodge modules, Koll\'ar's package for $S_X(\mathbb{V})$ proves to be instrumental in the series of works by Popa-Schnell \cite{PS2013,PS2014,PS2017}. 
	Recently, the authors of this article give an $L^2$-theoretic proof to Saito's result in \cite{SC2021_kollar}.
	
	The purpose of the present article is to demonstrate that Koll\'ar's package holds for specific subsheaves of a polystable parabolic Higgs bundle, which is twisted by a multiplier ideal sheaf associated with an $\mathbb{R}$-divisor.  
	This approach provides a unified and systematic treatment for various versions of Koll\'ar's package. Notably, even when considering the dualizing sheaf twisted by a multiplier ideal sheaf, this package yields novel results. The main arguments rely on the $L^2$-theoretic method developed by Andreotti-Vesentini \cite{AV1965} and H\"ormander \cite{Hormander1965}, as well as the non-abelian Hodge theory developed by Simpson \cite{Simpson1990} and Mochizuki \cite{Mochizuki2006,Mochizuki20072}.
	\subsection{Main results}
Let $X$ be a smooth projective variety and $D$ be a simple normal crossing divisor on $X$. Consider a locally abelian parabolic Higgs bundle $(H,\{{_E}H\}_{E\in{\rm Div}_D(X)},\theta)$ on $(X,D)$, which consists of the following data. 
\begin{itemize}
	\item A locally abelian parabolic vector bundle $(H,\{{_E}H\}_{E\in{\rm Div}_D(X)})$ with parabolic structures on $D$, where the filtration $\{{_E}H\}$ is indexed by the set ${\rm Div}_D(X)$ of $\bR$-divisors whose supports lie in $D$.
	\item A Higgs field $\theta:H|_{X\backslash D}\to H|_{X\backslash D}\otimes \Omega_{X\backslash D}$ which has regular singularities along $D$, meaning that $\theta({_E}H) \subset {_E}H \otimes \Omega_X(\log D)$ for every $E \in {\rm Div}_D(X)$.
\end{itemize}
This parabolic Higgs bundle is required to have vanishing first and second parabolic Chern classes and to be polystable with respect to an ample line bundle $A$ on $X$. Readers may refer to \S \ref{section_NAH} for the detailed notations regarding parabolic Higgs bundles.
	
The main object of study is a specific extension, denoted as $P_{E,(2)}(H)$, of $H|_{X\backslash D}$. To define this extension, let $E$ be an $\mathbb{R}$-divisor supported on $D$. We denote $\cup_{E'<E}{_{E'}}H$ as ${_{<E}}H$ .
The coherent sheaf $P_{E,(2)}(H)$ is determined by the following conditions.
	\begin{enumerate}
		\item ${_{<E}}H\subset P_{E,(2)}(H)\subset {_{E}}H$. In particular $P_{E,(2)}(H)|_{X\backslash D}=H|_{X\backslash D}$.
		\item Take $x$ to be a point on $D$, and let $(U;z_1,\dots, z_n)$ be holomorphic local coordinates on an open neighborhood $U$ of $x$ in $X$, such that $D=\{z_1\cdots z_r=0\}$. Let $D_i=\{z_i=0\}$, $i=1,\dots,r$. Let $N_i$ be the nilpotent part of the residue map ${\rm Res}_{D_i}(\theta)$ of the Higgs field along $D_i$. For any subset $I\subset\{1,\dots,r\}$, let $\{W(\sum_{i\in I}N_i)_{m}\}_{m\in\bZ}$ represent the monodromy weight filtration on ${_{E}}H|_U$ at $x$ with respect to $\sum_{i\in I}N_i$. 
			Then we have
			$$P_{E,(2)}(H)={_{<E}}H+\sum_{\emptyset\neq I\subset\{1,\dots,r\}}{_{\leq_IE}}H\cap\bigcap_{J\subset I}W(\sum_{i\in J}N_i)_{-\#(J)-1}$$
			on $U$ (please refer to \S \ref{section_L2_prolongation} for the notation ${_{\leq_IE}}H$).
	\end{enumerate}
When $E=D$, $P_{D,(2)}(H)$ is the sheaf of $L^2$-holomorphic sections with coefficients in $H$ (see Proposition \ref{prop_P(2)_L2}). Originally introduced by S. Zucker \cite{Zucker1979} on algebraic curves, this kind of construction involves Higgs bundles $H$ that arises from a variation of Hodge structure, making it a significant subject of study in $L^2$-cohomology of a variation of Hodge structure. The construction by Zucker has been extended to higher dimensions by Kashiwara and Kawai \cite{Kashiwara1986}. Recently, Mochizuki \cite{Mochizuki2023} provided a further generalization of the characterization of the $L^2$ de Rham complex to $\lambda$-connections (or twistor $\sD$-modules). This advancement represents a significant step in establishing the hard Lefschetz theorem for the pushforward of a twistor $\sD$-module within the framework of K\"ahler geometry. It is noteworthy that our construction $P_{D,(2)}(H)$ diverges from those presented in \cite{Zucker1979,Kashiwara1986,Mochizuki2023}, as we take into account the $L^2$-sections associated with a Hermitian metric on $X$, while other authors focus on Poincar\'e-type metrics.

To generalize Zucker's construction \cite{Zucker1979} to higher-dimensional bases and non-canonical indexed extensions, we have the sheaf $P_{E,(2)}(H)$. In particular, $P_{D-E,(2)}(H)$ combines elements of both $P_{D,(2)}(H)$ and the multiplier ideal sheaf associated with $E$ when $E\geq0$. This aspect makes $P_{E,(2)}(H)$ more convenient in applications when $E\neq D$. It can be proven that $P_{E,(2)}(H)$ is always locally free (Proposition \ref{prop_P(2)_locally_free}).
	
According to the non-abelian Hodge theory of Simpson \cite{Simpson1988, Simpson1990} and Mochizuki \cite{Mochizuki2006, Mochizuki20071}, there exists a $\mu_A$-polystable regular parabolic flat bundle $(V, \{{_E}V\}_{E\in{\rm Div}_D(X)}, \nabla)$ associated with $(H,\{{_E}H\}_{E\in{\rm Div}_D(X)},\theta)$. Furthermore, there is an isomorphism between the $C^\infty$ complex bundles:
$$\rho:H|_{X\backslash D}\otimes_{\sO_{X\backslash D}}\sC^\infty_{X\backslash D}=V|_{X\backslash D}\otimes_{\sO_{X\backslash D}}\sC^\infty_{X\backslash D} \quad (\S \ref{section_Simpson_Mochizuki}).$$
In particular, the $C^\infty$ complex bundle associated with $H|_{X\backslash D}$ has two complex structures. One is the complex structure $\dbar$ of the Higgs bundle $H|_{X\backslash D}$, and the other is the complex structure $\nabla^{0,1}$ (the $(0,1)$-part of $\nabla$) of the flat bundle $V|_{X\backslash D}$.

	The main result of the present article is the following.
	\begin{thm}\label{thm_main}	
		Let $K$ be a locally free subsheaf of $H|_{X\backslash D}$ satisfying the following conditions:
		\begin{itemize}
			\item \emph{Holomorphicity:} $\nabla^{0,1}(K) = 0$, meaning that $K$ is holomorphic with respect to both the complex structures $\dbar$ and $\nabla^{0,1}$.
			\item \emph{Weak transversality}\footnote{This condition is referred to as weak transversality due to Griffiths's transversality when $H$ arises from a variation of Hodge structure with $\{F^p\}_{p\in\bZ}$ as the Hodge filtration and $K = F^p$ for some $p$.}: $(\nabla - \theta)(K) \subset K \otimes \sA^{1,0}_{X\backslash D}$. 
		\end{itemize}
	Let $L$ be a line bundle on $X$ such that $L\simeq_{\mathbb{R}}B+N$, where $B$ is a semi-positive $\mathbb{R}$-divisor (see Definition \ref{defn_semipositive_divisor}) and $N$ is an $\mathbb{R}$-divisor on $X$ supported on $D$. Let $F$ be a Nakano semi-positive vector bundle on $X$.
	Then, the sheaf $\omega_X\otimes (P{_{D-N,(2)}}(H)\cap j_\ast K)\otimes F\otimes L$  satisfies Koll\'ar's package with respect to any proper surjective holomorphic morphism $X\to Y$ to a complex space $Y$, where $j:X\backslash D\to X$ is the immersion. 
	\end{thm}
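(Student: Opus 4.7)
\emph{Plan.} The strategy is to realize $\omega_X \otimes (P_{D-N,(2)}(H) \cap j_\ast K) \otimes F \otimes L$ as the sheaf of $L^2$ holomorphic $n$-forms with values in an appropriately metrized bundle on $X\setminus D$, and then reduce Koll\'ar's package to the standard $L^2$-analytic machinery (Andreotti-Vesentini, H\"ormander, Takegoshi, Enoki) applied through the harmonic metric supplied by non-abelian Hodge theory. This is the same philosophy as the authors' treatment \cite{SC2021_kollar} of the variation-of-Hodge-structure case, but now carried out for a general tame polystable parabolic Higgs bundle.

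First I would invoke the Simpson-Mochizuki correspondence in the parabolic setting (\S \ref{section_Simpson_Mochizuki}): since $(H,\{{_E}H\},\theta)$ is $\mu_A$-polystable with vanishing parabolic Chern classes, it carries a pluriharmonic metric $h$, adapted to the parabolic structure, and satisfying the Higgs-type curvature identity $\Theta(H,h)=-[\theta,\theta^\ast_h]$. I would combine $h$ with a complete Poincar\'e-type K\"ahler metric $\omega$ on $X\setminus D$, a smooth Nakano semi-positive metric $h_F$ on $F$, a semi-positive (possibly singular) Hermitian metric on $\mathcal{O}_X(B)$ coming from Definition \ref{defn_semipositive_divisor}, and the tautological singular metric on $\mathcal{O}_X(N)$. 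Using Mochizuki's norm estimates expressing asymptotic behavior of $h$ in terms of the KMS spectrum, together with Proposition \ref{prop_P(2)_L2} which identifies $P_{D,(2)}(H)$ with the $L^2$-sheaf for $(H,h)$, I would establish an isomorphism
\begin{equation*}
\mathscr{H}^{n,0}_{(2)}\big(K\otimes F\otimes L, h\otimes h_F\otimes h_L;\omega\big) \simeq \omega_X\otimes (P_{D-N,(2)}(H)\cap j_\ast K)\otimes F\otimes L.
\end{equation*}
The twist by $\mathcal{O}_X(-N)$ in the metric precisely converts the boundary filtration index from $D$ to $D-N$, while intersection with $j_\ast K$ reflects that we restrict sections to the holomorphic subbundle $K\subset H$.

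Once this $L^2$-identification is in place, the four statements of Koll\'ar's package are extracted by running the now-standard $L^2$ arguments. The crucial algebraic input is the weak transversality $(\nabla-\theta)(K)\subset K\otimes\sA^{1,0}_{X\setminus D}$ combined with $\nabla^{0,1}(K)=0$: these imply that the second fundamental form of $K\hookrightarrow H$ with respect to the harmonic metric has the correct sign, so that the Bochner-Kodaira-Nakano identity for $K$-valued forms inherits semi-positivity from $\Theta(H,h)=-[\theta,\theta^\ast_h]$ up to terms absorbed by the semi-positive curvature contributions of $F$ and $B$. From this, Takegoshi's relative vanishing yields $R^qf_\ast=0$ for $q>\dim X-\dim Y$ and torsion freeness for all $q$; Kodaira-Nakano vanishing on the projective base $Y$ yields the vanishing theorem; Enoki's harmonic-representative argument applied to multiplication by $s\in H^0(X,L^{\otimes l})$ yields the injectivity theorem; and hard Lefschetz for tame harmonic bundles (via Mochizuki's polarized twistor structure) combined with the Deligne-Koll\'ar criterion yields the decomposition theorem.

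The main obstacle I expect is the $L^2$-identification itself. The control of $L^2$ sections of $K$ near $D$ with respect to the harmonic metric requires tracking how the weak transversality interacts with the KMS filtration, and verifying that the singular factor of the metric on $L$ coming from $N$ combines with the parabolic norm growth to produce exactly $P_{D-N,(2)}(H)$ rather than some neighboring extension. All other steps are by now essentially well-developed once a clean $L^2$-framework is established.
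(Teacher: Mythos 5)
Your proposal follows essentially the same route as the paper: establish Nakano semi-positivity of $(K,h)$ from holomorphicity plus weak transversality via the vanishing of the second fundamental form and the self-dual equation, use Mochizuki's norm estimates to identify $\omega_X\otimes (P_{D-N,(2)}(H)\cap j_\ast K)\otimes F\otimes L$ with the $L^2$-sheaf $S_X(K\otimes F\otimes L, hh_Fh_L)$ and to verify tameness, and then invoke the $L^2$ machinery. The only difference is presentational: where you propose to re-run Takegoshi/Enoki-type arguments directly, the paper simply cites its ``meta Koll\'ar's package'' (Theorem \ref{thm_abstract_Kollar_package} from \cite{SC2021_kollar}), which packages exactly those arguments for tame Nakano semi-positive Hermitian bundles.
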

The following examples demonstrate how Theorem \ref{thm_main} can be used to derive known results on Koll\'ar's packages with coefficients in a Hodge module or a multiplier ideal sheaf (or a combination of both).
	\subsubsection{Example: parabolic bundle}\label{example_parabolic_bundle}
	Let $X$ be a smooth projective variety and $D\subset X$ a simple normal crossing divisor on $X$. Let $(H,\{{_E}H\}_{E\in{\rm Div}_D(X)})$ be a locally abelian parabolic bundle on $(X,D)$ with vanishing first and second parabolic Chern classes, which is polystable with respect to an ample line bundle $A$ on $X$. 
	In this case, one considers $(H,\{{_E}H\}_{E\in{\rm Div}_D(X)})$ as a parabolic Higgs bundle with a vanishing Higgs field. As a result, $P_{E,(2)}(H)={_{<E}}H$. By taking $K=H|_{X\backslash D}$ in Theorem \ref{thm_main}, the holomorphicity and weak transversality conditions hold for $K$. Therefore, we obtain Koll\'ar's package with coefficients in a polystable locally abelian parabolic bundle with vanishing first and second parabolic Chern classes.
	\begin{thm}
	Let $L$ be a line bundle on $X$ such that $L\simeq_{\bR}B+N$, where $B$ is a semi-positive $\bR$-divisor and $N$ is an $\bR$-divisor on $X$ supported on $D$. Let $F$ be an arbitrary Nakano semi-positive vector bundle on $X$. 
		Then, $\omega_X\otimes {_{<D-N}}H\otimes F\otimes L$ satisfies Koll\'ar's package with respect to any proper, surjective holomorphic morphism $X\to Y$ to a complex space $Y$.
	\end{thm}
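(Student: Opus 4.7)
The plan is to derive this theorem as a direct specialization of Theorem~\ref{thm_main}, regarding $(H,\{{_E}H\}_{E\in{\rm Div}_D(X)})$ as a locally abelian parabolic Higgs bundle with vanishing Higgs field $\theta=0$. Polystability with respect to $A$ and the vanishing of parabolic Chern classes are assumed, so the hypotheses on the parabolic Higgs bundle imposed by Theorem~\ref{thm_main} are satisfied.

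First I would compute $P_{E,(2)}(H)$ in this special case. Since $\theta=0$, the residue ${\rm Res}_{D_i}(\theta)$ along every component $D_i$ of $D$ vanishes, so every nilpotent part $N_i$ is zero. The monodromy weight filtration associated with the zero nilpotent is trivial: $W(N_i)_m=0$ for $m<0$ and $W(N_i)_m={_E}H|_U$ for $m\geq 0$. Consequently $\bigcap_{i=1}^r W(N_i)_{-2}=0$ and the defining formula collapses to $P_{E,(2)}(H)|_U={_{<E}}H$. Specializing to $E=D-N$ yields $P_{D-N,(2)}(H)={_{<D-N}}H$.

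Next I would take $K=H|_{X\backslash D}$, which is trivially a locally free subsheaf of itself, and verify the holomorphicity and weak transversality hypotheses. By the Simpson--Mochizuki non-abelian Hodge correspondence (see \S\ref{section_Simpson_Mochizuki}), a polystable parabolic Higgs bundle with vanishing parabolic Chern classes and $\theta=0$ carries a Hermite--Einstein (in fact flat) metric $h$, and the associated flat connection is the Chern connection $\nabla=\dbar+\partial_h$. Restricted to $X\setminus D$ this yields $\nabla^{0,1}=\dbar$, so the two complex structures on the underlying $C^\infty$ bundle coincide there. Both conditions then follow at once: $\nabla^{0,1}(K)=0$ is tautological, and $(\nabla-\theta)(K)=\nabla(K)\subset K\otimes\sA^{1,0}_{X\backslash D}$ since $\dbar$ annihilates sections of $K$ while $\partial_h$ has values in $K\otimes\sA^{1,0}_{X\backslash D}=H|_{X\backslash D}\otimes\sA^{1,0}_{X\backslash D}$.

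Finally, since ${_{<D-N}}H$ is contained in $j_\ast(H|_{X\backslash D})=j_\ast K$ tautologically, one has $P_{D-N,(2)}(H)\cap j_\ast K={_{<D-N}}H$, and Theorem~\ref{thm_main} delivers Koll\'ar's package for $\omega_X\otimes{_{<D-N}}H\otimes F\otimes L$. The reduction is essentially mechanical once Theorem~\ref{thm_main} is in hand; the only substantive points requiring care are the computation of the monodromy weight filtration in the zero-nilpotent case and the identity $\nabla^{0,1}=\dbar$ that falls out of non-abelian Hodge theory when $\theta=0$.
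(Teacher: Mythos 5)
Your proposal is correct and follows exactly the paper's own route: viewing the parabolic bundle as a parabolic Higgs bundle with $\theta=0$, observing that the vanishing of the residues forces $\bigcap_i W(N_i)_{-2}=0$ so that $P_{D-N,(2)}(H)={_{<D-N}}H$, taking $K=H|_{X\backslash D}$, and checking holomorphicity and weak transversality before invoking Theorem~\ref{thm_main}. The paper states these steps more tersely, but the substance is identical.
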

	\subsubsection{Example: twisted Koll\'ar-Saito's $S$-sheaf}\label{example_twisted_Ssheaf}
	Let $\bV$ be a variation of Hodge structure on a regular Zariski open subset of a projective variety $X$. Koll\'ar \cite{Kollar1986_2} introduced a coherent sheaf $S_X(\bV)$ that generalizes the dualizing sheaf. He conjectured that $S_X(\bV)$ satisfies Koll\'ar's package. This conjecture was later proven by Saito \cite{MSaito1991} using the theory of mixed Hodge modules. In \cite{SC2021_kollar}, the authors provide a new proof of Koll\'ar's conjecture using the $L^2$-method. Theorem \ref{thm_main} allows us to extend Koll\'ar's conjecture to $S_X(\bV)$ twisted by a multiplier ideal sheaf.
	
	Let $X$ be a projective variety, and let $\bV=(\cV,\nabla,\{\cV^{p,q}\},Q)$ be a polarized complex variation of Hodge structure (Definition \ref{defn_CVHS}) on a dense regular Zariski open subset $X^o$ of $X$. For an $\bR$-Cartier divisor $N$ on $X$, we define a coherent sheaf $S_X(\bV,-N)$ on $X$ (see \S \ref{section_Twisted_Saito}) with the following properties:
		\begin{enumerate}
		\item $S_X(\bV,0)$ is canonically isomorphic to Koll\'ar-Saito's $S_X(\bV)$ (see \cite{MSaito1991}). For an $\bR$-Cartier divisor $N\geq 0$ on $X$, $S_X(\bV,-N)$ is a combination of $S_X(\bV)$ and the multiplier ideal sheaf associated with $N$.
		\item When $X$ is smooth and $X\backslash X^o$ is a simple normal crossing divisor such that ${\rm supp}(N)\subset X\backslash X^o$, we have $S_X(\bV,-N)\simeq\omega_X\otimes (P_{D-N,(2)}(H)\cap j_\ast S(\bV))$, where $(H,\{{_E}H\}_{E\in{\rm Div}_D(X)},\theta)$ is the parabolic Higgs bundle associated with $\bV$. Here $j:X\setminus D\rightarrow X$ is the immersion and $S(\bV)$ is the top indexed nonzero Hodge bundle.
	\end{enumerate} 
	As a consequence of Theorem \ref{thm_main}, we obtain the following.
	\begin{thm}\label{thm_main_CVHS}
		Let $f:X\to Y$ be a proper surjective holomorphic morphism to a complex space $Y$. Let $L$ be a line bundle on $X$ such that $L\simeq_{\bR}B+N$ where $B$ is a semi-positive $\bR$-Cartier divisor and $N$ is an $\bR$-Cartier divisor on $X$. Let $F$ be an arbitrary Nakano semi-positive vector bundle on $X$. Then $S_{X}(\bV,-N)\otimes F\otimes L$ satisfies Koll\'ar's package with respect to $f$.
	\end{thm}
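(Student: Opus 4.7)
The plan is to reduce Theorem~\ref{thm_main_CVHS} to Theorem~\ref{thm_main} via a log resolution of $X$, to verify that the smallest nonzero Hodge filtration step of $\bV$ satisfies the hypotheses of Theorem~\ref{thm_main}, and finally to descend Kollár's package back to $X$.

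First I would take a log resolution $\pi\colon\tilde X\to X$, isomorphic over $X^o$, such that $D:=\pi^{-1}(X\setminus X^o)$ is a simple normal crossing divisor containing $\mathrm{supp}(\pi^*N)$. The pullbacks $\pi^*L\simeq_\bR \pi^*B+\pi^*N$ and $\pi^*F$ preserve all positivity hypotheses, with $\pi^*N$ now supported on $D$. By the construction of the twisted S-sheaf on singular bases in \S\ref{section_Twisted_Saito} (which is designed to be compatible with log resolutions), one has $R\pi_*S_{\tilde X}(\pi^*\bV,-\pi^*N)\simeq S_X(\bV,-N)$ concentrated in degree zero. Combined with the projection formula, this reduces the problem to establishing Kollár's package for $S_{\tilde X}(\pi^*\bV,-\pi^*N)\otimes \pi^*F\otimes\pi^*L$ with respect to $f\circ\pi\colon\tilde X\to Y$, and then descending to $X$.

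On $\tilde X$, property~(2) of the twisted S-sheaf gives
\[
S_{\tilde X}(\pi^*\bV,-\pi^*N)\simeq \omega_{\tilde X}\otimes \bigl(P_{D-\pi^*N,(2)}(H)\cap j_*S(\pi^*\bV)\bigr),
\]
so I would apply Theorem~\ref{thm_main} with $K:=S(\pi^*\bV)\subset H|_{\tilde X\setminus D}$, namely the smallest nonzero step of the Hodge filtration of $\pi^*\bV$. Under the $C^\infty$-identification, $K$ corresponds to the direct summand $\cV^{p_{\max},q_{\min}}$ of $H=\bigoplus_p\cV^{p,w-p}$. For holomorphicity: $K$ is a $\nabla^{0,1}$-holomorphic subbundle of $\cV$ because the Hodge filtration consists of holomorphic subbundles of the flat-holomorphic bundle, and it is $\dbar$-holomorphic as a direct summand of $H$. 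For weak transversality: Griffiths transversality $\nabla(K)\subset F^{p_{\max}-1}\otimes\Omega^1$ combined with the identification of $\theta$ as exactly the component of $\nabla$ whose image modulo $K$ lies in the next graded Hodge piece yields $(\nabla-\theta)(K)\subset K\otimes\sA^{1,0}_{\tilde X\setminus D}$.

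The main obstacle will be the descent step: transferring each of the four Kollár properties from $f\circ\pi$ back to $f$ through the birational $\pi$. Torsion-freeness, vanishing, and the injectivity theorem follow essentially formally from the Leray spectral sequence together with the degree-zero concentration of $R\pi_*$ on the twisted sheaf, which gives $R^qf_*\simeq R^q(f\circ\pi)_*$ on the relevant objects. The decomposition theorem is the most delicate, as it is a splitting statement in $D(Y)$; however, the same degree-zero concentration allows the splitting of $R(f\circ\pi)_*$ produced by Theorem~\ref{thm_main} to be identified with one for $Rf_*$ applied to the pushforward. A related subtlety is the clean verification of the compatibility $R\pi_*S_{\tilde X}(\pi^*\bV,-\pi^*N)\simeq S_X(\bV,-N)$, which is intrinsic to the construction of the twisted S-sheaf on singular $X$ and must be established in \S\ref{section_Twisted_Saito}.
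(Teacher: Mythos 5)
Your proposal is correct and follows essentially the same route as the paper: resolve to the log smooth case, identify $S_{\tilde X}(\pi^{o*}\bV,-\pi^*N)$ with $\omega_{\tilde X}\otimes\bigl(P_{E-\pi^*N,(2)}(H)\cap j_*S(\pi^{o*}\bV)\bigr)$, check that $K=S(\bV)$ satisfies holomorphicity and weak transversality, and use the torsion-freeness part of Theorem~\ref{thm_main} applied to $\pi$ itself to get the degree-zero concentration $R\pi_*\simeq\pi_*$, whence $Rf_*\simeq R(f\circ\pi)_*$ and all four statements descend at once. The paper carries out exactly these steps, so no further comment is needed.
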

By setting $F = L = N=\mathcal{O}_X$, one can establish a proof for Koll\'ar's conjecture \cite[\S 5]{Kollar1986_2}.
    \begin{rmk}\label{rmk_Kahler_setting}
    The projectivity condition of $X$ in Theorem \ref{thm_main} can be relaxed to the condition of the existence of a tame harmonic metric on $H|_{X\backslash D}$, as proven in Theorem \ref{thm_Simpson-Mochizuki_harmonic_metric}. Therefore, when $X$ is a compact K\"ahler space, Theorem \ref{thm_main_CVHS} remains valid. This is because a polarized variation of Hodge structure allows for a tame harmonic metric, also known as the Hodge metric.
    \end{rmk}
	\subsubsection{Example: multiplier Grauert-Riemenschneider sheaf}
	When $\mathbb{V} = \mathbb{C}_{X_{\mathrm{reg}}}$ represents the trivial variation of Hodge structure and $N$ is an $\mathbb{R}$-Cartier divisor on $X$, the sheaf $\mathcal{K}_X(-N):= S_X(\mathbb{C}_{X_{\mathrm{reg}}}, -N)$\footnote{The sheaf $\mathcal{K}_X(-N)$ has been mentioned in the Nadel vanishing theorem on complex spaces \cite{Demailly2012}. When $X$ is smooth, it is referred to as the multiplier ideals by Viehweg \cite{Viehweg1995,Viehweg2010}.} is the Grauert-Riemenschneider sheaf twisted by the multiplier ideal sheaf (see \S \ref{section_GR_multiplier}) associated with $N$ when $N \geq 0$. 
	In fact, when $N = 0$, $\mathcal{K}_X(0)$ is the Grauert-Riemenschneider sheaf of $X$. 
	If $X$ is smooth and $N \geq 0$, then
	$\mathcal{K}_X(-N) \simeq \omega_X \otimes \sI(-N)$, 
	where $\sI(-N)$ is the multiplier ideal sheaf associated with $N$.
	
According to Theorem \ref{thm_main_CVHS}, we have the following theorem.
	\begin{thm}\label{thm_main_dualizing_sheaf}
		Let $f: X \to Y$ be a proper, surjective holomorphic morphism from a projective variety $X$ to a complex space $Y$. Let $L$ be a line bundle on $X$ such that $L \simeq_{\mathbb{R}} B + N$, where $B$ is a semi-positive $\mathbb{R}$-Cartier divisor, and $N$ is an $\mathbb{R}$-Cartier divisor on $X$. Let $F$ be an arbitrary Nakano semi-positive vector bundle on $X$. Then $\mathcal{K}_X(-N) \otimes F \otimes L$ satisfies Koll\'ar's package with respect to $f$.
	\end{thm}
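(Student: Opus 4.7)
The plan is to derive Theorem \ref{thm_main_dualizing_sheaf} as a direct specialization of Theorem \ref{thm_main_CVHS} to the trivial polarized variation of Hodge structure $\mathbb{V} = \mathbb{C}_{X_{\text{reg}}}$ on the regular locus $X_{\text{reg}}$ of $X$. By the defining equation $\mathcal{K}_X(-N) := S_X(\mathbb{C}_{X_{\text{reg}}}, -N)$ recorded immediately above the statement, the twisted Grauert--Riemenschneider sheaf is literally a particular instance of the twisted $S$-sheaf construction of \S \ref{section_Twisted_Saito}, so no further sheaf-theoretic construction needs to be introduced on top of what Theorem \ref{thm_main_CVHS} already produces.

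First, I would verify that the constant sheaf $\mathbb{C}_{X_{\text{reg}}}$, equipped with the flat connection $d$, the Hodge decomposition concentrated in bidegree $(0,0)$, and the standard Hermitian polarization $Q(u,v) = u\bar v$, satisfies the axioms of a polarized complex variation of Hodge structure in the sense of Definition \ref{defn_CVHS}. This is a short bookkeeping check: Griffiths transversality, flatness of $Q$, and the positivity condition for $Q$ become tautological because the Hodge filtration has a single step.

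Second, with this verification in hand, I would apply Theorem \ref{thm_main_CVHS} to the data $(\mathbb{V},N,F,L)$ that appear in the statement. The conclusion is that $S_X(\mathbb{C}_{X_{\text{reg}}}, -N)\otimes F\otimes L$ satisfies Koll\'ar's package with respect to $f$, and by the definitional identification this sheaf is precisely $\mathcal{K}_X(-N)\otimes F\otimes L$. Torsion freeness, the injectivity theorem, the Kodaira-type vanishing theorem, and the decomposition theorem therefore all transfer without modification.

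There is no substantive obstacle in this reduction: the analytic and geometric core of the argument---the parabolic Higgs bundle formalism, the $L^2$-Dolbeault description of $\omega_X\otimes (P_{D-N,(2)}(H)\cap j_\ast K)$, the use of Simpson--Mochizuki harmonic bundle theory to produce the subsheaf $K$ from the Hodge filtration, and the verification of the four statements constituting Koll\'ar's package---has already been carried out in the proofs of Theorems \ref{thm_main} and \ref{thm_main_CVHS}, including the reduction from a possibly singular projective variety $X$ to a log resolution. The role of Theorem \ref{thm_main_dualizing_sheaf} is merely to isolate the case of the trivial local system for use in Grauert--Riemenschneider-type applications and, when $N\geq 0$ and $X$ is smooth, for the reformulation $\mathcal{K}_X(-N)\simeq \omega_X\otimes \sI(-N)$ in terms of the multiplier ideal sheaf.
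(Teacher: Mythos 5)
Your proposal is correct and follows exactly the route the paper takes: the paper defines $\mathcal{K}_X(-N):=S_X(\mathbb{C}_{X_{\mathrm{reg}}},-N)$ and obtains the theorem by specializing Theorem \ref{thm_main_CVHS} to the trivial polarized variation of Hodge structure on $X_{\mathrm{reg}}$. Your additional check that $\mathbb{C}_{X_{\mathrm{reg}}}$ with the standard polarization satisfies Definition \ref{defn_CVHS} is a harmless (and correct) piece of bookkeeping that the paper leaves implicit.
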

Theorem \ref{thm_main_dualizing_sheaf} is applicable to Koll\'ar's package of pluricanonical bundles.
	\begin{cor}\label{cor_pluri_canonical_bundle}
		Let $f:X\to Y$ be a proper surjective holomorphic morphism from a smooth projective variety $X$ to a complex space $Y$. Let $A$ be a semi-positive line bundle on $X$ and $V\subset H^0(X,\omega^{\otimes km}_X\otimes A^{-1})$ a linear series for some positive integers $k$ and $m$.
		Let $F$ be an arbitrary Nakano semi-positive vector bundle on $X$. Then $\omega^{\otimes k+1}_X\otimes\sI(\frac{1}{m}|V|)\otimes F$ satisfies Koll\'ar's package with respect to $f$. 
	\end{cor}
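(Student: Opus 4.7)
The plan is to deduce Corollary \ref{cor_pluri_canonical_bundle} directly from Theorem \ref{thm_main_dualizing_sheaf} by choosing a suitable triple $(L,B,N)$ so that the twisted multiplier Grauert--Riemenschneider sheaf $\mathcal{K}_X(-N)\otimes F\otimes L$ becomes exactly the desired $\omega_X^{\otimes k+1}\otimes\sI(\tfrac{1}{m}|V|)\otimes F$. Since $X$ is smooth and the chosen $N$ will be effective, one may invoke the identification $\mathcal{K}_X(-N)\simeq\omega_X\otimes\sI(-N)$ from \S \ref{section_GR_multiplier}. Thus the task reduces to exhibiting an effective $\bR$-Cartier divisor $N$, a semi-positive $\bR$-Cartier divisor $B$, and a line bundle $L\simeq\omega_X^{\otimes k}$ satisfying $L\simeq_{\bR} N+B$ and $\sI(-N)=\sI(\tfrac{1}{m}|V|)$.

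The first step is to realize $\sI(\tfrac{1}{m}|V|)$ as the multiplier ideal of a single effective $\bR$-Cartier divisor. I would pick general members $D_1,\dots,D_M\in V$ and set $N:=\frac{1}{mM}\sum_{i=1}^{M}D_i$. A standard argument via a log resolution of the base ideal of $|V|$ yields
$$\sI\!\left(\tfrac{1}{m}|V|\right)=\sI(-N)$$
for $M$ sufficiently large; this is a classical property of multiplier ideals of linear series and is the only non-Hodge-theoretic ingredient in the derivation. Since each $D_i$ is the zero locus of a section of $\omega_X^{\otimes km}\otimes A^{-1}$, the divisor $N$ satisfies
$$N\simeq_{\bR} kK_X-\tfrac{1}{m}A.$$

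Finally, set $L:=\omega_X^{\otimes k}$ and $B:=\frac{1}{m}A$. Since $A$ is a semi-positive line bundle, $B$ is a semi-positive $\bR$-Cartier divisor in the sense of Definition \ref{defn_semipositive_divisor}, and
$$L\simeq_{\bR} kK_X=\left(kK_X-\tfrac{1}{m}A\right)+\tfrac{1}{m}A\simeq_{\bR} N+B.$$
All hypotheses of Theorem \ref{thm_main_dualizing_sheaf} are thus verified, and applying it to $(L,B,N)$, $F$ and $f$ gives Koll\'ar's package for
$$\mathcal{K}_X(-N)\otimes F\otimes L\simeq \omega_X^{\otimes k+1}\otimes\sI\!\left(\tfrac{1}{m}|V|\right)\otimes F,$$
which is the desired conclusion. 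I do not expect a substantive obstacle; the entire derivation is essentially formal bookkeeping once Theorem \ref{thm_main_dualizing_sheaf} is in hand, with the linear-series-to-single-divisor reduction being the only step that requires a classical outside fact.
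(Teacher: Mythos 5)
Your argument is correct, but it takes a genuinely different route from the paper's. You stay on $X$: you replace the linear series by a single effective $\bR$-divisor $N=\frac{1}{mM}\sum_{i=1}^M D_i$ built from general members, invoke the classical fact that general members compute the multiplier ideal of a linear series (Lazarsfeld, \emph{Positivity II}, Prop.~9.2.26; note $M>1/m$ already suffices, and one needs $V\neq 0$ to choose the $D_i$, the statement being vacuous otherwise), and then apply Theorem \ref{thm_main_dualizing_sheaf} directly with $L=\omega_X^{\otimes k}$, $B=\frac{1}{m}A$, using $\mathcal{K}_X(-N)\simeq\omega_X\otimes\sI(-N)$ from \S\ref{section_GR_multiplier}. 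The paper instead passes to a principalization $\pi:\widetilde{X}\to X$ of the base ideal of $V$, writes $\pi^\ast(kmK_X-A)=B+E$ with $B$ semi-ample and $E\geq 0$ exceptional, so that $\pi^\ast\omega_X^{\otimes k}\simeq_{\bQ}\frac{1}{m}(\pi^\ast A+B)+\frac{1}{m}E$; it then applies Theorem \ref{thm_main_dualizing_sheaf} once to $\pi$ itself (torsion freeness, to upgrade $\pi_\ast$ to $R\pi_\ast$ and identify $Rf_\ast$ of the downstairs sheaf with $R(f\circ\pi)_\ast$ of the upstairs one) and once to $f\circ\pi$. Your version is shorter and avoids the birational bookkeeping, at the cost of importing an external genericity statement and a choice of general members; the paper's version is choice-free, works with $\bQ$-divisors only, and is self-contained given the functoriality of multiplier ideals and the machinery already established in the article. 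Both are valid proofs of the corollary.
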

    \begin{rmk}
    	For the same reason as in Remark \ref{rmk_Kahler_setting}, Theorem \ref{thm_main_dualizing_sheaf} holds when $X$ is a compact K\"ahler space, and Corollary \ref{cor_pluri_canonical_bundle} holds when $X$ is a compact K\"ahler manifold.
    \end{rmk}
	\subsection{Applications}
	\subsubsection{Weakly positivity of higher direct images}
By applying Viehweg's trick, we can deduce the following result from Theorem \ref{thm_main}.
	\begin{thm}\label{thm_main_positivity}
		Notations as in Theorem \ref{thm_main_CVHS}. If $X\to Y$ is a surjective morphism between smooth projective varieties, then $R^qf_\ast(\omega_{X/Y}\otimes S_{X}(\bV,-N)\otimes F\otimes L)$ is weakly positive in the sense of Viehweg \cite{Viehweg1983}.
	\end{thm}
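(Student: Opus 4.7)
The plan is to combine Theorem \ref{thm_main} with Viehweg's classical fibre-product trick \cite{Viehweg1983}, where the vanishing and decomposition parts of Kollár's package play the role that Kollár's original vanishing for $\omega_X$ plays in Viehweg's proof. Write $\sF:=\omega_{X/Y}\otimes(P_{D-N,(2)}(H)\cap j_\ast K)\otimes F\otimes L$ and $\sG:=R^qf_\ast\sF$. By Viehweg's criterion for weak positivity, it suffices to produce a single ample line bundle $A_0$ on $Y$ such that for every $s\geq 1$ the sheaf $\sG^{\otimes s}\otimes A_0$ is generated by global sections over some dense Zariski open $Y^0\subset Y$; one then symmetrises and absorbs $A_0$ into sufficiently high tensor powers of any given ample class on $Y$ in the standard way.

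For each $s$, form the $s$-fold fibre product $f^{(s)}\colon X^{(s)}\to Y$ with projections $p_i\colon X^{(s)}\to X$, and choose a log resolution $\pi\colon\widetilde X^{(s)}\to X^{(s)}$ so that the total boundary $\widetilde D^{(s)}\subset\widetilde X^{(s)}$ (the union of the proper transforms of the $p_i^{-1}(D)$ with the exceptional locus of $\pi$) is simple normal crossing; write $\widetilde f^{(s)}:=f^{(s)}\circ\pi$. On $\widetilde X^{(s)}$ one forms the external tensor product of all the Higgs-theoretic data: the parabolic Higgs bundle $(H^{\boxtimes s},\theta^{\boxtimes s})$ with product parabolic filtration indexed by $\widetilde D^{(s)}$, the holomorphic subsheaf $K^{\boxtimes s}$, and the bundles $F^{\boxtimes s},L^{\boxtimes s}$. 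Tensor functoriality of non-abelian Hodge theory (\S\ref{section_NAH}) ensures that $H^{\boxtimes s}$ is again a locally abelian polystable parabolic Higgs bundle with vanishing parabolic Chern classes, that $K^{\boxtimes s}$ satisfies the holomorphicity and weak transversality conditions with respect to the product flat connection, that $L^{\boxtimes s}\simeq_{\bR}B^{\boxtimes s}+N^{\boxtimes s}$ retains the required splitting, and that $F^{\boxtimes s}$ remains Nakano semi-positive. Theorem \ref{thm_main} is therefore available on $\widetilde X^{(s)}\to Y$ for
\[
\widetilde\sF^{(s)}:=\omega_{\widetilde X^{(s)}/Y}\otimes\bigl(P_{\widetilde D^{(s)}-N^{\boxtimes s},(2)}(H^{\boxtimes s})\cap j_\ast K^{\boxtimes s}\bigr)\otimes F^{\boxtimes s}\otimes L^{\boxtimes s}.
\]

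The vanishing theorem from Kollár's package combined with Castelnuovo--Mumford regularity on $Y$ then produces one ample line bundle $A_0$ on $Y$, \emph{independent of $s$} (depending only on $\dim Y$), such that $R^{qs}\widetilde f^{(s)}_\ast(\widetilde\sF^{(s)})\otimes A_0$ is generated by global sections. Over the smooth locus $U\subset Y$ of $f$, where $\pi$ is an isomorphism and the fibre product is already smooth, flat base change combined with the decomposition theorem applied to each of the $s$ factors yields a Künneth decomposition
\[
R\widetilde f^{(s)}_\ast(\widetilde\sF^{(s)})|_U\ \simeq\ \bigoplus_{q_1,\dots,q_s\geq 0}\bigl(R^{q_1}f_\ast\sF\otimes\cdots\otimes R^{q_s}f_\ast\sF\bigr)|_U[-q_1-\cdots-q_s].
\]
Taking $\sH^{qs}$ and isolating the summand indexed by the partition $(q,q,\dots,q)$ exhibits $\sG^{\otimes s}|_U$ as a direct summand of $R^{qs}\widetilde f^{(s)}_\ast(\widetilde\sF^{(s)})|_U$, so the global generation obtained above descends to the required generic global generation of $\sG^{\otimes s}\otimes A_0$ over $U$.

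The main obstacle is the behaviour of the parabolic extension under the fibre product: one must arrange $\pi$ so that the indexing divisor $\widetilde D^{(s)}-N^{\boxtimes s}$ encodes precisely the external product of the indices $D-N$ on each factor, and verify that the $L^2$-type construction $P_{\bullet,(2)}(\cdot)\cap j_\ast(\cdot)$ commutes with external tensor products at least generically on $Y$ (a local computation near the crossings of $\widetilde D^{(s)}$ using the monodromy weight filtration). Because weak positivity is only required generically, the verification can be carried out over the smooth locus of $f$, reducing the functoriality to a smooth-base statement; still, the Hodge-theoretic checks that polystability with vanishing parabolic Chern classes is preserved under the box product, and that the weak transversality of $K$ lifts to $K^{\boxtimes s}$, are the technical heart of the argument and are what allow Viehweg's symmetrisation to be applied in this parabolic setting.
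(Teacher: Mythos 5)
Your outline — Viehweg's fibre-product trick, a Kollár package on a resolution of the fibre product, Castelnuovo--Mumford regularity to get generic global generation of $S^{[m]}\sG\otimes(\text{fixed line bundle})$ — is exactly the paper's strategy. But the step you yourself flag as "the technical heart" is a genuine gap, and it is precisely the step the paper is engineered to avoid. You propose to equip the resolved fibre product with an external product parabolic Higgs bundle $(H^{\boxtimes s},\theta^{\boxtimes s})$, a product parabolic filtration indexed by the total boundary, and the subsheaf $K^{\boxtimes s}$, and then to re-apply Theorem \ref{thm_main} there. This requires: (i) that $\mu$-polystability and vanishing of the first and second parabolic Chern classes are preserved under pullback to the (singular, then resolved) fibre product and under tensor product — neither is a formal consequence of "tensor functoriality," and polystability of tensor products is exactly the kind of statement that in this subject is \emph{deduced from} the existence of harmonic metrics rather than used to prove things about them; (ii) that the parabolic/weight-filtration structure, hence $P_{\bullet,(2)}(\cdot)$, is compatible with external products along the new crossings created by the fibre product and the resolution. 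None of this is established in your argument, and restricting to the smooth locus of $f$ does not help with (i), since polystability is a global condition on the fibre product.

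The paper's proof sidesteps all of this by never forming a parabolic Higgs bundle on the fibre product. It first shows (Corollary \ref{cor_locally_free}) that $\sE:=(P_{D-N,(2)}(H)\cap j_\ast K)\otimes F\otimes L$ is locally free and carries the singular Hermitian metric $hh_Fh_L$, which is Nakano semi-positive off $D$ and tame (Lemma \ref{lem_Nsp_tame}). On the resolution $X^{(m)}$ of the main component $X^{[m]}$ it takes the bundle $\sE^{(m)}=\pi^\ast\bigotimes_i p_i^\ast\sE$ with the pulled-back product metric $h_{\sE^{(m)}}$; tameness and Nakano semi-positivity are stable under tensor products of Hermitian bundles, so the \emph{meta} package Theorem \ref{thm_abstract_Kollar_package} applies directly to $S_{X^{(m)}}(\sE^{(m)},h_{\sE^{(m)}})$ — no polystability, parabolic structure, or weight filtration on the fibre product is ever needed. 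The comparison with $S^{[m]}R^qf_\ast(\omega_{X/Y}\otimes\sE)$ is then made via the trace map $\rho_\ast\omega_{X^{(m)}/Y}\to\omega_{X^{[m]}/Y}$, the inclusion $S_{X^{(m)}}(\sE^{(m)},h_{\sE^{(m)}})\subset\omega_{X^{(m)}}\otimes\sE^{(m)}$ coming from $h_0^{(m)}\lesssim h_{\sE^{(m)}}$, and flat base change over the open set where the direct images are locally free — rather than your Künneth decomposition, which would again require the decomposition theorem for the product object whose hypotheses you have not verified. To repair your proof you should either carry out the metric-level argument as the paper does, or actually prove the preservation of polystability, vanishing parabolic Chern classes, and the compatibility of $P_{\bullet,(2)}$ with external products; the latter is substantially harder than the theorem you are trying to prove.
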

It is worth noting that the weak positivity of $R^qf_\ast(\omega_{X/Y}\otimes F)$ has already been established by Mourougane-Takayama \cite{Takayama2009} using an alternative method.
	\subsubsection{Generic vanishing theorem}
	Together with Hacon's criterion (\cite{Hacon2004}, see also \cite{Popa2011, Schnell_GV}), one can deduce a generic vanishing theorem associated with a polystable parabolic Higgs bundle with vanishing first and second parabolic Chern classes.
	\begin{thm}\label{thm_main_GV}
	Notations as in Theorem \ref{thm_main}. Let $f:X\to A$ be a surjective morphism to an abelian variety. Then $R^qf_\ast(\omega_X\otimes(P{_{D-N,(2)}}(H)\cap j_\ast K)\otimes F\otimes L)$ is a GV-sheaf in the sense of Pareschi and Popa \cite{Popa2011}. As a consequence, 
		$${\rm codim}_{{\rm Pic}^0(A)}\left\{M\in{\rm Pic}^0(A)\mid H^i(X,\omega_X\otimes(P{_{D-N,(2)}}(H)\cap j_\ast K)\otimes F\otimes L\otimes f^\ast M)\neq0\right\}$$
		$$\geq i-(\dim X-\dim f(X)).$$
		for all $i$.
	\end{thm}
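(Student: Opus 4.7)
The plan is to invoke Hacon's criterion for GV-sheaves and reduce the statement to the vanishing clause of Theorem \ref{thm_main} via an étale base change by an isogeny. Set $\sG:=\omega_X\otimes (P_{D-N,(2)}(H)\cap j_*K)\otimes F\otimes L$. By the form of Hacon's criterion in \cite{Hacon2004, Popa2011, Schnell_GV}, to show that each $R^qf_*\sG$ is a GV-sheaf on $A$ it suffices to verify that, for any sufficiently positive ample line bundle $M$ on the dual abelian variety $\hat A$, with associated isogeny $\phi_M:\hat A\to A$, one has
$$H^i(\hat A,\phi_M^*(R^qf_*\sG)\otimes M)=0\quad\text{for all } i>0.$$

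To establish this vanishing, I would form the Cartesian square with $\tilde X:=X\times_A\hat A$, projections $\pi:\tilde X\to X$ and $\tilde f:\tilde X\to\hat A$. Since $\phi_M$ is finite étale, so is $\pi$; hence $\tilde X$ is smooth projective and $\tilde D:=\pi^{-1}(D)$ is simple normal crossing. Next I would transport all the data of Theorem \ref{thm_main} to $\tilde X$: the pulled-back parabolic Higgs bundle $(\pi^*H,\pi^*\theta)$ is again locally abelian, polystable with respect to the ample pullback $\pi^*A$, and has vanishing parabolic Chern classes by functoriality of non-abelian Hodge theory under étale maps; the subsheaf $\pi^*K$ inherits both holomorphicity and weak transversality; $\pi^*F$ remains Nakano semi-positive because $d\pi$ is an isomorphism; and $\pi^*L\simeq_{\bR}\pi^*B+\pi^*N$ with $\pi^*B$ semi-positive and $\pi^*N$ supported on $\tilde D$. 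The local coordinate description of $P_{D-N,(2)}(H)$ in terms of the monodromy weight filtrations of the residues of $\theta$ is étale-local, so $\pi^*\sG$ coincides with the sheaf $\tilde\sG$ manufactured from the pulled-back data on $\tilde X$. Thus Theorem \ref{thm_main} applies to $\tilde f:\tilde X\to\hat A$. Flat base change along the étale map $\phi_M$ gives $\phi_M^*R^qf_*\sG\simeq R^q\tilde f_*\tilde\sG$, and since $\hat A$ is projective with $M$ ample on it, the vanishing clause of Theorem \ref{thm_main} delivers $H^i(\hat A,R^q\tilde f_*\tilde\sG\otimes M)=0$ for $i>0$, which is Hacon's criterion. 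Hence every $R^qf_*\sG$ is a GV-sheaf on $A$.

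For the codimension inequality, I would combine the decomposition and torsion freeness clauses of Theorem \ref{thm_main}. The projection formula together with $Rf_*\sG\simeq\bigoplus_q R^qf_*\sG[-q]$ yields, for each $M\in\mathrm{Pic}^0(A)$,
$$H^i(X,\sG\otimes f^*M)\simeq\bigoplus_{q\ge0}H^{i-q}(A,R^qf_*\sG\otimes M).$$
The torsion freeness clause forces $R^qf_*\sG=0$ for $q>\dim X-\dim A=\dim X-\dim f(X)$, so only finitely many $q$ in that range contribute. The GV-property of each $R^qf_*\sG$ gives codimension at least $i-q$ for the locus where the $(i-q)$-th term is nonzero, and the union over $q\le \dim X-\dim f(X)$ produces the bound $i-(\dim X-\dim f(X))$.

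The main obstacle will be the verification that every ingredient of Theorem \ref{thm_main} pulls back intact along the étale cover $\pi:\tilde X\to X$, most notably the filtration $P_{D-N,(2)}(H)\cap j_*K$, whose definition involves the monodromy weight filtrations of residues of the Higgs field. One must confirm that these filtrations are étale-local invariants and that the polystability, vanishing parabolic Chern classes, holomorphicity, and weak transversality are all preserved under $\pi^*$. This is essentially formal from the functoriality of non-abelian Hodge theory and the étale-local nature of the combinatorial data, but it is the step where the various inputs of the main theorem must be assembled coherently on $\tilde X$ in order for Kollár vanishing to deliver exactly the statement Hacon's criterion requires.
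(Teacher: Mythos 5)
Your proposal is correct and follows essentially the same route as the paper: Hacon's criterion reduces the GV-property to Koll\'ar vanishing for the pulled-back data on the finite \'etale cover $X\times_A\hat A$ (the paper checks all finite \'etale covers $B\to A$ rather than only the isogenies $\phi_M$, but the verification via flat base change and Theorem \ref{thm_main} is identical), and the codimension inequality follows from the decomposition/degeneration together with the vanishing of $R^qf_\ast$ for $q>\dim X-\dim f(X)$, exactly as in the paper's spectral-sequence argument.
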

As a result, according to Theorem \ref{thm_main_CVHS}, $R^qf_\ast(\omega_X\otimes S_X(\bV,-N)\otimes F\otimes L)$ is a GV-sheaf. By setting $\bV=\bC$ as the trivial VHS, we can conclude that $R^qf_\ast(\mathcal{K}_X(-N)\otimes F\otimes L)$ is also a GV-sheaf. It is worth noting that Fujino-Matsumura \cite{Matsumura2021} have demonstrated that $R^qf_\ast(\omega_X\otimes L\otimes\sI(h))$ is a GV-sheaf for a line bundle $L$ equipped with a singular Hermitian metric $h$ with positive curvature current.
	\subsection{Organization of the article}
	In \S 2 we review the non-abelian Hodge theory established by Simpson \cite{Simpson1990} and Mochizuki \cite{Mochizuki2006,Mochizuki20072,Mochizuki20071,Mochizuki2009}. This theory serves as the primary connection between algebraic objects like polystable parabolic Higgs bundles and transcendental objects like tame harmonic bundles. It allows for the investigation of polystable parabolic Higgs bundles using the $L^2$-method.
	The proof of Theorem \ref{thm_main} is presented in \S 3. The main technical tools is the meta Koll\'ar's package (see \S 3.1) developed by the authors in \cite{SC2021_kollar} using $L^2$-methods.
\S 4 showcases examples and applications of Theorem \ref{thm_main}.
\subsection{Acknowledgement}
Both authors would like to express their sincere gratitude to Professor Takuro Mochizuki for pointing out an error in the characterization of $L^2$-sections, and for drawing their attention to the remarkable works \cite{Kashiwara1986,Mochizuki2023}. They also express appreciation to the anonymous referees, whose valuable comments contributed to the improvement of the article.

	\section{Nonabelian Hodge theory on a smooth quasi-projective variety}\label{section_NAH}
	In this section, we will review the knowledge on non-abelian Hodge theory over a smooth quasi-projective variety, as established by Simpson \cite{Simpson1990} and Mochizuki \cite{Mochizuki2006,Mochizuki20072,Mochizuki20071,Mochizuki2009}.
	
	Throughout this section, let $X$ be a smooth projective variety and $D=\sum_{i=1}^l D_i$ a reduced simple normal crossing divisor on $X$. The sheaf of rational functions on $X$ that are regular on $X\backslash D$ is denoted as $\sO_X[*D]$. Additionally, we let ${\rm Div}_D(X)$ be the $\bR$-vector space of $\bR$-divisors $E$ on $X$ where ${\rm supp}(E)\subset D$.
	Let $A=\sum_{i=1}^la_i D_i$ and $B=\sum_{i=1}^lb_i D_i$. We use the notation $A\leq (<)B$ to indicate that $a_i\leq (<)b_i$ for each $i=1,\dots,l$.
	\subsection{Parabolic Higgs bundle}
	We follow the definition given by Iyer-Simpson \cite{Simpson2007}, although the notations may differ slightly. A \emph{parabolic sheaf} ${_\ast}H$ on $(X,D)$ is a torsion free $\sO_X[*D]$-module $H$ with a collection of torsion free coherent $\sO_X$-submodules $\{{_E}H\mid E\in {\rm Div}_D(X)\}$, satisfying the following conditions:
	\begin{itemize}
		\item $H=\bigcup_{E\in {\rm Div}_D(X)}{_E}H$,
		\item ${_{E_1}}H\subset {_{E_2}}H$ if $E_1\leq E_2$,
		\item  ${_{E+\epsilon D_i}}H = {_E}H$ for any $E\in {\rm Div}_D(X)$, any $i=1,\dots,l$ and any constant $0<\epsilon\ll 1$,
		\item ${_{E+D_i}}H={_E}H\otimes \sO_X(D_i)$ for any $E\in {\rm Div}_D(X)$ and every $i=1,\dots, l$.
	\end{itemize} 
	Consequently, ${_E}H|_{X\backslash D}=H|_{X\backslash D}$ for every $E\in {\rm Div}_D(X)$. For every $E\in {\rm Div}_D(X)$, we define
	$$_{<E}H:=\bigcup_{E'<E}{_{E'}}H.$$
	\begin{defn}
		A parabolic sheaf ${_\ast}H=(H,\{{_E}H\}_{E\in{\rm Div}_D(X)})$ on $(X,D)$ is called a \emph{parabolic bundle} (resp. \emph{parabolic line bundle}) if each ${_E}H$ is a vector bundle (resp. line bundle).     
		A parabolic bundle ${_\ast}H$ on $(X,D)$ is called \emph{locally abelian} if there exists an isomorphism between ${_\ast}H$ and a direct sum of parabolic line bundles in a Zariski neighborhood of any point $x \in X$.
	\end{defn}
	\begin{defn}
		A \emph{Higgs bundle} $(H,\theta)$ on $X\backslash D$ consists of a holomorphic vector bundle $H$ on $X\backslash D$, along with an $\sO_{X\backslash D}$-linear map $\theta: H\to H\otimes \Omega_{X\backslash D}$, where $\theta$ is called a Higgs field on $H$, satisfying $\theta\wedge\theta=0$.
		
		A \emph{logarithmic Higgs bundle} $(H,\theta)$ on $(X,D)$ consists of a holomorphic vector bundle $H$ on $X$, along with an $\sO_{X}$-linear map $\theta: H\to H\otimes \Omega_{X}(\log D)$, where $\theta$ is called a Higgs field on $H$, satisfying $\theta\wedge\theta=0$.
	\end{defn}
	\begin{defn}\label{defn_parabolic_higgs_bundle}
		A \emph{locally abelian parabolic Higgs bundle} $({_\ast}H,\theta)$ on $(X,D)$ consists of a locally abelian parabolic bundle ${_\ast}H=(H,\{{_E}H\}_{E\in{\rm Div}_D(X)})$ on $(X,D)$ and a Higgs field $\theta: H|_{X\backslash D}\to H|_{X\backslash D}\otimes \Omega_{X\backslash D}$ such that $\theta$ extends to a logarithmic Higgs field
		$${_E}H\to{_E}H\otimes\Omega_X(\log D)$$
		for every $E\in{\rm Div}_D(X)$.
	\end{defn}
	\subsection{Tame harmonic bundle}
	Let $(H,\theta,h)$ be a Higgs bundle on $X\backslash D$ with a Hermitian metric $h$. Let $\overline{\theta}$ be the adjoint of $\theta$ with respect to $h$ and let $\partial$ be the unique $(1,0)$-connection such that $\partial+\dbar$ is compatible with $h$. 
	\begin{defn}
	The Higgs bundle $(H,\theta,h)$ is called a \emph{harmonic bundle} if $(\partial+\dbar+\theta+\overline{\theta})^2=0$. In this case, the Hermitian metric $h$ is called a \emph{harmonic metric}.
	\end{defn}
	Let $\nabla_h$ be the Chern connection on the harmonic bundle $H$ with respect to $h$ and let  $\Theta_h(H)=\nabla_h^2$ be its Chern curvature form. Then we have the self-dual equation
	\begin{align}\label{align_self-dual equation}
		\Theta_h(H)+\theta\wedge\overline{\theta}+\overline{\theta}\wedge\theta=0.
	\end{align}
	For the purpose of the present article, we are  focusing on  tame harmonic bundles in the sense of Simpson \cite{Simpson1990} and Mochizuki \cite{Mochizuki20072,Mochizuki20071}.
	\begin{defn}\label{defn_tame_harmonic}
		A harmonic bundle $(H,\theta,h)$ on $X\backslash D$ is called \emph{tame} if there exists a logarithmic Higgs bundle on $(X,D)$ that extends $(H,\theta)$. In this case we call $(H,\theta,h)$ a \emph{tame harmonic bundle} on $(X,D)$.
	\end{defn}
	\subsubsection{Analytic prolongations of tame harmonic bundles}
	\begin{defn}[Analytic prolongation](\cite{Mochizuki2002}, Definition 4.2)\label{defn_prolongation}
		Let $(H,h)$ be a Hermitian vector bundle on $X\backslash D$.
		Let $E\in{\rm Div}_D(X)$, $U$ be an open subset of $X$, and $s\in \Gamma(U\backslash D,H)$ be a holomorphic section. If $|s|_h=O(\prod_{i=1}^r |z_{i}|^{-a_{i}-\epsilon})$ for any positive number $\epsilon$, where $z_1,\dots,z_n$ are holomorphic local coordinates such that $E=\sum_{i=1}^ra_i\{z_i=0\}$, we denote $(s)\leq_h -E$.
		The $\sO_X$-module ${_E}H_h$ is defined as 
		$$\Gamma(U, {_E}H_h):=\{s\in\Gamma(U\backslash D,H)\mid (s)\leq_h -E\}$$
		for any open subset $U\subset X$.
	\end{defn}
Now let us recall the profound result that the set of analytic prolongations forms a locally abelian parabolic Higgs bundle (see Simpson \cite[Theorem 3]{Simpson1990} and Mochizuki \cite{Mochizuki20072}).
	\begin{thm}\label{thm_parabolic}
	 If $(H,\theta,h)$ is a tame harmonic bundle on $(X,D)$, then the bundle $ {_\ast}H_h=\left(H[\ast D],\{{_E}H_h\}_{E\in{\rm Div}_D(X)},\theta\right)$ is a locally abelian parabolic Higgs bundle. Here, $H[\ast D]=\bigcup_{E\in{\rm Div}_D(X)}{_E}H_h$ denotes the sheaf of meromorphic sections of $H$ whose poles lie in $D$.
	\end{thm}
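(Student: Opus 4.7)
The plan is to work locally near a boundary point $x\in D$ and produce, from the harmonic metric $h$, an explicit frame that simultaneously trivialises the parabolic filtration $\{_{E}H_{h}\}$ as a direct sum of parabolic line subbundles compatible with $\theta$. Pick holomorphic coordinates $(z_{1},\dots,z_{n})$ around $x$ with $D=\{z_{1}\cdots z_{r}=0\}$, and set $D_{i}=\{z_{i}=0\}$. Since $(H,\theta)$ already extends to a logarithmic Higgs bundle by the tameness hypothesis, the residues $\mathrm{Res}_{D_{i}}(\theta)$ are well-defined commuting endomorphisms on the restriction to $D_{i}$, and tameness forces their eigenvalues to grow at most polynomially. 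I would first carry out the spectral analysis: decompose each residue into its semisimple part $S_{i}$ and nilpotent part $N_{i}$, and use that the $S_{i}$ mutually commute to produce a simultaneous generalised eigenspace decomposition indexed by tuples $(\alpha_{1},\dots,\alpha_{r})$ of eigenvalues (the \emph{KMS spectrum}).

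The second step is Simpson's norm estimate, which is the technical heart. On a neighbourhood $U\setminus D$, I would compare the harmonic metric $h$ with the model metric $h_{\mathrm{model}}=\bigoplus\prod_{i=1}^{r}|z_{i}|^{-2\mathrm{Re}(\alpha_{i})}(-\log|z_{i}|)^{w}$ assembled from the KMS data and the weight filtrations $W(N_{i})$. The assertion is that $h$ and $h_{\mathrm{model}}$ are mutually bounded up to polynomial factors in $\log|z_{i}|$. Granting this comparison (which is Simpson's main norm estimate in the curve case, generalised by Mochizuki in several variables via a careful iteration over the strata $D_{I}=\bigcap_{i\in I}D_{i}$), the set of sections $s$ with $|s|_{h}=O\!\bigl(\prod|z_{i}|^{-a_{i}-\epsilon}\bigr)$ is identified with the $\sO_{X}$-module generated by the generalised eigensections whose weight under $W(N_{i})$ and whose real eigenvalue parameter satisfy the numerical inequalities prescribed by $E=\sum a_{i}D_{i}$.

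From this identification I would read off all the required properties in succession. Coherence of $_{E}H_{h}$ and the inclusions $_{E_{1}}H_{h}\subset{}_{E_{2}}H_{h}$ for $E_{1}\leq E_{2}$ are immediate from the explicit description. The semicontinuity relation $_{E+\epsilon D_{i}}H_{h}={}_{E}H_{h}$ for $0<\epsilon\ll 1$ follows because the KMS spectrum is discrete modulo $\bZ$. The shift relation $_{E+D_{i}}H_{h}={}_{E}H_{h}\otimes\sO_{X}(D_{i})$ is the tautological fact that multiplication by $z_{i}^{-1}$ improves the growth bound by exactly one unit along $D_{i}$. Compatibility of $\theta$ with the filtration, i.e.\ $\theta({}_{E}H_{h})\subset{}_{E}H_{h}\otimes\Omega_{X}(\log D)$, follows because the eigenvalue tuples and weight filtrations are preserved under the residue action and $\theta-\sum\mathrm{Res}_{D_{i}}(\theta)\frac{dz_{i}}{z_{i}}$ is holomorphic.

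The step I expect to be the main obstacle is \emph{local abelianness}: producing, Zariski-locally around $x$, an isomorphism between $_{\ast}H_{h}$ and a direct sum of parabolic line bundles. In one variable, the spectral decomposition of the residue does the job directly. In several variables one must simultaneously diagonalise the KMS structure across all the intersecting components $D_{i}$, and the commuting nilpotents $N_{i}$ a priori only yield a filtration rather than a splitting. I would follow Mochizuki's approach: first split off the generalised eigenspaces for the commuting semisimple parts $S_{i}$ to reduce to the unipotent case; then, in the unipotent case, use that the sum $\sum t_{i}N_{i}$ for generic $t_{i}>0$ has a weight filtration whose graded pieces can be lifted canonically, producing, after an inductive argument on the number of boundary components, a holomorphic frame in which the parabolic filtration is simultaneously diagonal. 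Combining this local decomposition with the previous steps yields the locally abelian parabolic Higgs bundle structure on ${}_{\ast}H_{h}$.
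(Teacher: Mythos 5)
First, a structural point: the paper does not prove this statement at all --- it is recalled as a known theorem of Simpson and Mochizuki (the text explicitly cites \cite[Theorem 3]{Simpson1990} and \cite{Mochizuki20072}), so there is no in-paper argument to compare yours against. Your outline is, in broad strokes, the correct roadmap for the actual proof in the literature: reduction to a local model near a stratum of $D$, the KMS spectral decomposition of the residues, the norm estimate comparing $h$ to a model metric, and an inductive construction of a compatible frame to get local abelianness. As a guide to where the difficulties lie (the multivariable norm estimate and the simultaneous splitting across intersecting components), it is accurate.

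There is, however, one genuine conceptual error and one structural gap. The error: you build the model metric with exponents $|z_i|^{-2\mathrm{Re}(\alpha_i)}$ where the $\alpha_i$ are eigenvalues of $\mathrm{Res}_{D_i}(\theta)$. For a tame harmonic bundle the KMS spectrum along $D_i$ consists of \emph{pairs} $(a_i,\alpha_i)$, where $a_i$ is the parabolic weight governing the polynomial growth of $|s|_h$ (this is exactly the exponent appearing in Theorem \ref{thm_tame_estimate}) and $\alpha_i$ is the residue eigenvalue of the Higgs field; these are independent parameters, and in general $a_i\neq\mathrm{Re}(\alpha_i)$ (that identification is correct for the residue of the flat connection $\nabla$ on the de Rham side, not for $\mathrm{Res}(\theta)$ on the Dolbeault side). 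Since the filtration $\{{_E}H_h\}$ is defined by metric growth, indexing the model by $\mathrm{Re}(\alpha_i)$ would produce the wrong prolongations. The structural gap: the two steps carrying all the analytic weight --- the several-variable norm estimate and the existence of a holomorphic frame simultaneously compatible with the weight filtrations of the commuting nilpotents $N_i$ --- are invoked by reference rather than argued, so the proposal is a proof outline of the Simpson--Mochizuki theorem rather than a proof; that is defensible here precisely because the paper itself treats the result as a black box, but the exponent issue above should be fixed even at the level of a sketch.
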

	\subsubsection{Norm estimate}
	Consider a tame harmonic bundle $(H,\theta,h)$ on $(X,D)$. In this subsection, we will review an important norm estimate for $h$ at a point $x\in D$ given by Mochizuki in \cite{Mochizuki20072}. For convenience, we can assume that $X$ is a polydisc.
	
	Let $0\leq r\leq n$ be an integer, and let $\Delta_{\frac{1}{2}}:=\{z\in\bC\mid |z|<\frac{1}{2}\}$ be the unit disc. We can express $X$ as $\Delta^r_{\frac{1}{2}}\times\Delta^{n-r}_{\frac{1}{2}}$, with $z_1,\dots, z_n$ as the standard coordinates. Define $D_i:=\{z_i=0\}\subset X$ for $i=1,\dots,r$, $D=\cup_{i=1}^rD_i$, and $X^\ast:=X\backslash D$. Additionally, consider the tame harmonic bundle $(H,\theta,h)$ on $X^\ast$. 
	If we consider the locally abelian parabolic Higgs bundle $(H[\ast D],\{{_E}H_h\}_{E\in{\rm Div}_D(X)},\theta)$ defined by the analytic prolongations (Theorem \ref{thm_parabolic}), we can assume that the eigenvalues of the residue map ${\rm Res}_{D_i}(\theta)$ are constant on $D_i$. This assumption holds because $(H,\theta,h)$ is a restriction of a tame harmonic bundle over a compact log smooth pair.
	
	Let $E_0=\sum_{i=1}^r a_iD_i\in {\rm Div}_{D}(X)$. Suppose ${\rm Res}_{D_i}(\theta)$ has the nilpotent part $N_i: {_{E_0}}H_h|_{D_i}\to{_{E_0}}H_h|_{D_i}$. Let ${W^{(k)}}$ $(1\leq k\leq r)$ represent the monodromy weight filtration on ${_{E_0}}H_h|_{D_1\cap\cdots\cap D_k}$ associated with $N_1+\cdots+N_k$.
	\begin{thm}\emph{(\cite[\S 13.3]{Mochizuki20072})}\label{thm_tame_estimate}
		Let  $v\in {_{E_0}}H_h$ be a holomorphic local section at ${\bf 0}=(0,\dots,0)$. Assume that the following conditions hold.
		\begin{itemize}
			\item $v\notin {_{E}}H_h$ for every $E\leq E_0$, $E\neq E_0$;
			\item $v|_{\bf 0}\in \bigcap_{k=1}^r W^{(k)}_{l_k}|_{\bf 0}$ for some $l_1,\dots,l_r\in\bZ$ and $0\neq [v]\in {\rm Gr}_{l_r}^{W^{(r)}}\cdots{\rm Gr}_{l_1}^{W^{(1)}}{\rm Gr}_{E_0}H_h|_{\bf 0}$.
		\end{itemize}
		Then
		$$|v|^2_h\sim|z_1|^{-2a_1}\cdots|z_r|^{-2a_r} \left(\frac{\log|z_1|}{\log|z_2|}\right)^{l_1}\cdots\left(-\log|z_r|\right)^{l_r}$$
		over any region of the form
		$$\left\{(z_1,\dots z_n)\in (\Delta^\ast_{\frac{1}{2}})^r\times \Delta^{n-r}_{\frac{1}{2}}\bigg|\frac{\log|z_1|}{\log|z_2|}>\epsilon,\dots,-\log|z_r|>\epsilon,(z_{r+1},\dots,z_{n})\in M\right\}$$
		for any $\epsilon>0$ and an arbitrary compact subset $M\subset \Delta^{n-r}_{\frac{1}{2}}$.
	\end{thm}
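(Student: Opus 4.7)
The plan is to reduce the general statement to the case $E_0=0$ and then deploy a multi-variable nilpotent orbit analysis for tame harmonic bundles. First I would tensor $(H,\theta,h)$ with the rank-one parabolic line bundle whose underlying bundle is trivial on $X$ and whose metric is $\prod_{i=1}^{r}|z_i|^{-2a_i}$. This operation preserves the tame harmonic property, shifts the parabolic filtration by $E_0$, and cleanly extracts the prefactor $|z_1|^{-2a_1}\cdots|z_r|^{-2a_r}$ from the estimate, reducing matters to the statement $|v|_h^2\sim (\log|z_1|/\log|z_2|)^{l_1}\cdots(-\log|z_r|)^{l_r}$ for $v$ a local frame of ${_0}H_h$ lying in the deepest filtered piece $\bigcap_k W^{(k)}_{l_k}$ at the origin.

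Next I would construct an explicit model metric $h_{\mathrm{mod}}$ on a nilpotent orbit determined by the residues $N_1,\dots,N_r$ and the weight filtrations $W^{(k)}$, and show that $h$ and $h_{\mathrm{mod}}$ are mutually bounded over the sector $\{\log|z_1|/\log|z_2|>\epsilon,\dots,-\log|z_r|>\epsilon\}$. The model is the pluriharmonic metric built from a simultaneous Jacobson--Morozov $\mathfrak{sl}_2$-triple compatible with the commuting nilpotents $N_i$, normalised so that sections in $\mathrm{Gr}^{W^{(k)}}_{l_k}$ pick up exactly the claimed logarithmic weight in the $k$-th direction. On this model the norm of $v$ can be computed directly, giving the right-hand side of the estimate.

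To prove mutual boundedness I would induct on $r$. The base case $r=1$ is Simpson's norm estimate on curves, proved through Zucker-type $L^2$ analysis and the one-variable $SL_2$-orbit theorem. For the inductive step, I would restrict to slices $\{z_r=\mathrm{const}\}$ and apply the inductive hypothesis with uniform constants, then use the self-dual equation \eqref{align_self-dual equation} together with a Bochner-type argument to propagate estimates in the $z_r$-direction; the sector condition $\log|z_{k}|/\log|z_{k+1}|>\epsilon$ is precisely what guarantees that the various weight filtrations can be simultaneously diagonalised up to bounded error, so that the induction closes.

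The main technical obstacle is the compatibility of the several monodromy weight filtrations $W^{(k)}$. The filtrations $W^{(k)}$ and $W^{(k+1)}$ are associated to different nilpotents and only become ``relatively compatible'' after one passes to an associated graded and applies a Cattani--Kaplan--Schmid-type multi-variable $SL_2$-orbit theorem in the harmonic-bundle category. Controlling the error between the true harmonic metric and the prescribed nilpotent-orbit model, uniformly over the sector and with estimates sharp enough to yield the asymptotic equivalence $\sim$ (rather than just an inequality $\lesssim$), is the delicate point and is where Mochizuki's refinement of Simpson's one-variable estimate does the real work.
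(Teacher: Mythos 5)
The paper does not prove this statement: Theorem \ref{thm_tame_estimate} is quoted directly from Mochizuki \cite[\S 13.3]{Mochizuki20072} and used as a black box, so there is no internal argument to compare yours against. As a summary of where the result comes from, your sketch tracks the broad strategy in the literature reasonably well: twisting by a model parabolic line bundle to normalize the weights $a_i$ and extract the factor $|z_1|^{-2a_1}\cdots|z_r|^{-2a_r}$ is standard, the comparison with a model metric built from the commuting nilpotent residues is indeed the shape of Mochizuki's argument, and you correctly identify the compatibility of the filtrations $W^{(1)},\dots,W^{(r)}$ as the crux.

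As a proof, however, the proposal has an essential gap: the step ``show that $h$ and $h_{\mathrm{mod}}$ are mutually bounded over the sector'' is not a step, it is the theorem, and the sketch offers no mechanism for it beyond naming tools. Two places where a literal execution would fail. First, the slicing induction: restricting to $\{z_r=\mathrm{const}\}$ does give a tame harmonic bundle on the slice, but the inductive hypothesis then produces constants depending on the slice, and uniformity as the slice approaches $D_r$ is exactly the content of the multi-variable estimate; moreover a Bochner-type argument from the self-dual equation (\ref{align_self-dual equation}) gives subharmonicity-type, one-sided bounds, hence at best $\lesssim$ in one direction, not the two-sided asymptotic $\sim$ claimed. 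Second, the ``simultaneous Jacobson--Morozov $\mathfrak{sl}_2$-triple compatible with all the $N_i$ and all the $W^{(k)}$'' is not formal linear algebra for commuting nilpotents: here $W^{(k)}$ is the weight filtration of $N_1+\cdots+N_k$, and the existence of a splitting compatible with the whole family is itself a consequence of the several-variable $SL_2$-orbit theorem (Cattani--Kaplan--Schmid in the Hodge-theoretic case, Mochizuki's analogue for tame harmonic bundles) — that is, of the very asymptotic analysis one is trying to establish, so invoking it at the outset is circular. Citing Mochizuki, as the paper does, is the appropriate way to use this statement; supplying a genuine proof would be a separate, memoir-length undertaking.
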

	\subsection{Nonabelian Hodge theory of Simpson-Mochizuki}\label{section_Simpson_Mochizuki}
	In the context of parabolic bundles, we have the concept of parabolic Chern classes. Consequently, we can define the notion of $\mu_A$-semistable (or $\mu_A$-stable, $\mu_A$-polystable) parabolic Higgs bundles where $A$ is an ample line bundle on $X$. For a detailed explanation of these definitions, please refer to \cite[Chapter 3]{Mochizuki2006} and \cite[Definition 2.7]{Simpson2007}. 
	
Let us recall a significant result that is essential to the nonabelian Hodge theory on smooth quasi-projective varieties (see Simpson \cite{Simpson1990} and Mochizuki \cite{Mochizuki2006}).
	\begin{thm}[\cite{Simpson1990,Mochizuki2006}]\label{thm_Simpson-Mochizuki_harmonic_metric}
		Let ${_\ast}H=(H,\{{_E}H\}_{E\in{\rm Div}_D(X)},\theta)$ be a locally abelian parabolic Higgs bundle on $(X,D)$ and $A$ be an ample line bundle on $X$.
		Then the following two statements are equivalent.
		\begin{enumerate}
			\item ${_\ast}H$ is $\mu_A$-polystable with vanishing first and second parabolic Chern
			classes.
			\item There exists a tame harmonic metric $h$ on $(H|_{X\backslash D},\theta)$ such that ${_\ast}H$ is isomorphic to $\left(H[\ast D],\{{_E}H_h\}_{E\in{\rm Div}_D(X)},\theta\right)$ as a parabolic Higgs bundle.
		\end{enumerate}
	\end{thm}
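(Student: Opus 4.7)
The plan is to prove the two implications separately, with the main content in $(1) \Rightarrow (2)$, which is the Kobayashi-Hitchin correspondence in the parabolic Higgs setting.

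\textbf{Direction $(2) \Rightarrow (1)$.} Assume $h$ is a tame harmonic metric and let ${_\ast}H_h$ be the parabolic Higgs bundle supplied by Theorem \ref{thm_parabolic}. First I would verify vanishing of the parabolic Chern classes. The self-dual equation \eqref{align_self-dual equation} gives $\Theta_h(H) = -\theta\wedge\overline{\theta}-\overline{\theta}\wedge\theta$, so ${\rm tr}\,\Theta_h(H)=0$ and a direct computation shows that $2\,{\rm tr}(\Theta_h^2)-({\rm tr}\,\Theta_h)^2$ is ${\rm d}$-exact on $X\setminus D$. Combining this with the tame norm estimates of Theorem \ref{thm_tame_estimate}, which give precise control of $h$ near $D$, an $L^2$ Chern-Weil argument in the spirit of Simpson pushes the exactness through integration by parts and yields $c_1^{\mathrm{par}}({_\ast}H_h)=0$ and $\int_X (2c_2^{\mathrm{par}}-(c_1^{\mathrm{par}})^2)\cdot A^{n-2}=0$. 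For $\mu_A$-polystability, let ${_\ast}H'\subset {_\ast}H_h$ be a saturated parabolic Higgs subsheaf; the restriction of $h$ induces a singular Hermitian metric whose parabolic degree can be computed via the second fundamental form. The Bogomolov-Gieseker inequality applied to ${_\ast}H'$, again using the tame asymptotics near $D$, gives $\mu_A({_\ast}H')\leq\mu_A({_\ast}H_h)=0$, with equality forcing the second fundamental form to vanish and hence producing a parabolic Higgs splitting.

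\textbf{Direction $(1) \Rightarrow (2)$.} The task is to construct a tame harmonic metric on $(H|_{X\setminus D},\theta)$. I would follow Mochizuki's parabolic extension of the Donaldson-Uhlenbeck-Yau-Simpson machine. Fix a K\"ahler form $\omega$ on $X\setminus D$ of Poincar\'e type along $D$ and choose an initial smooth Hermitian metric $h_0$ on $H|_{X\setminus D}$ which is adapted to the prescribed parabolic weights, i.e.\ any holomorphic section $s$ of ${_E}H$ satisfies $|s|_{h_0}\leq C\prod_i|z_i|^{-a_i-\epsilon}$ in local coordinates. Then evolve $h_0$ by the Donaldson heat flow
$$h_t^{-1}\partial_t h_t = -\Lambda_\omega\bigl(\Theta_{h_t}(H)+[\theta,\overline{\theta}_{h_t}]\bigr),$$
establish long-time existence, and exploit $\mu_A$-polystability together with the vanishing of $c_1^{\mathrm{par}}$ and $c_2^{\mathrm{par}}$ to show that the associated Donaldson functional is bounded below along the flow. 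This forces subconvergence to a stationary solution $h_\infty$, which by construction satisfies the harmonic bundle equation. The parabolic structure of $h_\infty$ would be controlled through the adapted initial data and the norm estimates of Theorem \ref{thm_tame_estimate}, giving precisely the prescribed ${_E}H$ as its analytic prolongations; polystable but not stable cases are handled by splitting into stable factors and invoking a uniqueness theorem for harmonic metrics on each simple summand.

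\textbf{Main obstacle.} The analytic construction in $(1) \Rightarrow (2)$ is the difficult part. Two interacting technical issues dominate: first, one must control the heat flow uniformly near the simple normal crossing divisor $D$, where both metric and curvature are singular; second, one must ensure that the limiting metric $h_\infty$ has \emph{exactly} the prescribed parabolic weights, not merely some parabolic extension. Matching these weights is what makes vanishing of both $c_1^{\mathrm{par}}$ and $c_2^{\mathrm{par}}$ (not just $c_1^{\mathrm{par}}$) the sharp numerical hypothesis. A full proof requires substantial estimates on weighted Sobolev spaces with Poincar\'e-type metrics, and this is where we would ultimately quote Mochizuki \cite{Mochizuki2006,Mochizuki20072} rather than reproduce the argument.
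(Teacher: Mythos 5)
The paper does not prove this theorem: it is quoted verbatim from Simpson and Mochizuki as a foundational input (the text introduces it with ``Let us recall a significant result\dots''), so there is no internal proof to compare against. Your outline is a reasonable summary of the strategy actually used in the cited references --- Chern--Weil plus a Bogomolov--Gieseker/second-fundamental-form argument for $(2)\Rightarrow(1)$, and a Donaldson--Uhlenbeck--Yau--Simpson continuity/heat-flow construction with parabolically adapted initial metric for $(1)\Rightarrow(2)$ --- and, like the paper, you ultimately defer the weighted analytic estimates to Mochizuki, so the proposal is in effect an expanded citation rather than a self-contained proof; that is consistent with how the paper treats the statement. One imprecision worth flagging: the vanishing of the second parabolic Chern character is not used to bound the Donaldson functional along the flow (stability and $c_1^{\mathrm{par}}=0$ handle existence of a Hermitian--Einstein metric); its role is to upgrade the limiting Hermitian--Einstein metric to a \emph{pluriharmonic} one, i.e.\ to force $(\partial+\dbar+\theta+\overline{\theta})^2=0$ rather than merely $\Lambda_\omega(\Theta+[\theta,\overline{\theta}])=0$, via Simpson's argument that the $L^2$-norm of the full curvature is controlled by $\int \mathrm{ch}_2^{\mathrm{par}}\cdot A^{n-2}$. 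Also note that Mochizuki's actual higher-dimensional argument proceeds by perturbing the parabolic weights to rational ones and taking limits of the resulting Hermitian--Einstein metrics, rather than by running a single heat flow on the Poincar\'e-type metric, but this is a difference of implementation, not of substance.
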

    Using the tame harmonic metric, one can establish a profound relation between the following categories.
    \begin{itemize}
    \item $\sD(X,D)$: the category of $\mu_A$-polystable locally abelian parabolic flat bundles on $(X,D)$ with vanishing first and second parabolic Chern
    classes;
    \item $\sH(X,D)$: the category of $\mu_A$-polystable locally abelian parabolic Higgs bundles on $(X,D)$ with vanishing first and second parabolic Chern
    classes.
    \end{itemize}
For the purpose of this article, let us briefly explain the equivalence functor $\sS\sM:\sH(X,D)\to \sD(X,D)$.
Let ${_\ast}H=(H,\{{_E}H\}_{E\in{\rm Div}_D(X)},\theta)\in \sH(X,D)$, and let $h$ be the tame harmonic metric as in Theorem \ref{thm_Simpson-Mochizuki_harmonic_metric}.
Let $\overline{\theta}$ be the adjoint of $\theta$, and let $\partial$ be the unique $(1,0)$-connection such that $\partial+\dbar$ is compatible with $h$. Then $(H|_{X\backslash D}\otimes_{\sO_{X\backslash D}}\sA^0_{X\backslash D},\nabla:=\partial+\dbar+\theta+\overline{\theta})$ is a meromorphic flat connection that is regular along the divisor $D$, where $\sA^0_{X\backslash D}$ is the sheaf of $C^\infty$-functions on $X\backslash D$.
Let $\nabla=\nabla^{1,0}+\nabla^{0,1}$ be the decomposition according to the bidegree. Then $V_0:=\ker(\nabla^{0,1}:H|_{X\backslash D}\otimes_{\sO_{X\backslash D}}\sA^0_{X\backslash D}\to H|_{X\backslash D}\otimes_{\sO_{X\backslash D}}\sA^{0,1}_{X\backslash D})$ is a holomorphic vector bundle endowed with the flat holomorphic connection $\nabla$ and the harmonic metric $h$.
The analytic prolongations of $(V_0,h)$ (Definition \ref{defn_prolongation}) give rise to a $\mu_A$-polystable parabolic flat connection $(V:=V_0[\ast D],\{{_E}V\}_{E\in {\rm Div}_D(X)},\nabla)$ with trivial parabolic characteristic numbers. Then $\sS\sM({_\ast}H)=(V,\{{_E}V\}_{E\in {\rm Div}_D(X)},\nabla)$.
	\begin{thm}[\cite{Simpson1988,Simpson1990,Mochizuki2006,Mochizuki2009}]\label{thm_nonabelian_Hodge}
		The functor $\sS\sM:\sH(X,D)\to \sD(X,D)$ is an equivalence of categories. Furthermore, if $(H,\{{_E}H\}_{E\in{\rm Div}_D(X)},\theta)\in \sH(X,D)$ and $$\sS\sM((H,\{{_E}H\}_{E\in{\rm Div}_D(X)},\theta))=(V,\{{_E}V\}_{E\in {\rm Div}_D(X)},\nabla),$$ then there exists an identity between $C^\infty$ complex bundles $$H|_{X\backslash D}\otimes_{\sO_{X\backslash D}}\sA^0_{X\backslash D}=V|_{X\backslash D}\otimes_{\sO_{X\backslash D}}\sA^0_{X\backslash D}.$$
	\end{thm}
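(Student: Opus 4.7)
The strategy is to construct a quasi-inverse functor $\sS\sM^{-1}: \sD(X,D)\to\sH(X,D)$ via tame harmonic metrics on flat bundles, and then exhibit that both functors correspond to the same passage through the intermediate category of tame harmonic bundles. The $C^\infty$ identity is essentially tautological from the definition of $\sS\sM$: the holomorphic subbundle $V_0=\ker(\nabla^{0,1}:H\otimes_{\sO_{X\backslash D}}\sA^0_{X\backslash D}\to H\otimes_{\sO_{X\backslash D}}\sA^{0,1}_{X\backslash D})$ is constructed as a new holomorphic structure on the underlying $C^\infty$ bundle of $H|_{X\backslash D}$, so after tensoring with $\sA^0_{X\backslash D}$ one recovers the same $C^\infty$ bundle. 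The analytic prolongation $V_0[\ast D]$ merely enlarges by meromorphic sections over $D$ and does not affect the $C^\infty$ identification over $X\setminus D$.

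For the inverse functor, given $(V,\{{_E}V\},\nabla)\in\sD(X,D)$, the flat-bundle analogue of Theorem \ref{thm_Simpson-Mochizuki_harmonic_metric} (due to Corlette-Simpson in the compact case and extended by Mochizuki to the tame parabolic setting) produces a tame pluri-harmonic metric $h$ on $V|_{X\backslash D}$. Writing $\nabla=d_h+\psi$ where $d_h$ is the unique $h$-unitary connection and $\psi$ is the self-adjoint one-form, one sets $\theta:=\psi^{1,0}$ and $\dbar:=d_h^{0,1}$; the pluri-harmonic equation guarantees that $(V_0':=\ker(\dbar),\theta,h)$ is a tame harmonic bundle. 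Its analytic prolongations $\{{_E}(V_0')_h\}$ (Definition \ref{defn_prolongation}) together with $\theta$ define the desired object $\sS\sM^{-1}(V,\{{_E}V\},\nabla)\in\sH(X,D)$ via Theorem \ref{thm_parabolic}.

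To verify $\sS\sM\circ\sS\sM^{-1}\simeq\mathrm{id}$ and $\sS\sM^{-1}\circ\sS\sM\simeq\mathrm{id}$, the key observation is that both functors are implemented by passing through the same tame harmonic bundle on $X\setminus D$, merely switching between the two holomorphic structures $\dbar$ (Higgs side) and $\nabla^{0,1}=\dbar+\overline{\theta}$ (flat side) on the common $C^\infty$ bundle. Uniqueness of the tame harmonic metric (up to positive scalars on simple factors), which follows from $\mu_A$-polystability and vanishing parabolic Chern classes, makes this passage well-defined on morphisms. Full faithfulness then reduces to the statement that a morphism of tame harmonic bundles commuting with $(\dbar,\theta)$ automatically commutes with $\nabla$, which is standard.

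The main obstacle is the compatibility of parabolic structures under $\sS\sM^{-1}$: one must show that the filtration $\{{_E}(V_0')_h\}$ obtained by analytic prolongation from the harmonic metric reproduces the originally given parabolic filtration $\{{_E}V\}$. This is the heart of Mochizuki's work and requires the fine asymptotic norm estimate (Theorem \ref{thm_tame_estimate}) together with the vanishing of parabolic $c_1$ and $c_2$, which rigidifies the growth of $h$ near $D$ so that the prolongation levels match the prescribed parabolic weights. Once this compatibility and its Higgs-side analogue are in place, essential surjectivity of $\sS\sM$ is immediate from Theorem \ref{thm_Simpson-Mochizuki_harmonic_metric}, and the equivalence of categories follows.
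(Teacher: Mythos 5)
The paper does not prove this theorem: it is imported wholesale from Simpson and Mochizuki via the bracketed citations, so there is no in-paper argument to compare your proposal against. Your sketch is a faithful outline of the strategy actually used in those references --- the $C^\infty$ identity is indeed tautological from the construction of $V_0=\ker(\nabla^{0,1})$ as a second holomorphic structure on the same underlying $C^\infty$ bundle, the quasi-inverse comes from the Corlette--Mochizuki existence of tame pluri-harmonic metrics on polystable parabolic flat bundles with vanishing parabolic Chern classes, and the genuinely hard step you correctly isolate (that the analytic prolongations of the harmonic metric recover the prescribed parabolic filtration, via the norm estimates of Theorem \ref{thm_tame_estimate}) is the technical core of Mochizuki's work. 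Since that last step is precisely what is deferred to the cited literature rather than reproved, your proposal is at the same level of completeness as the paper's treatment and is consistent with it.
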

	\subsection{Weighted $L^2$-prolongation}\label{section_L2_prolongation}
	Let $A=\sum_{i=1}^la_i D_i$ and $B=\sum_{i=1}^lb_i D_i$, and let $I\subset\{1,\dots,l\}$ be a subset. We use the notation $A<_I B$ to indicate that $a_i\leq b_i$ for each $i=1,\dots,l$ and $a_i< b_i$ for every $i\notin I$. For each $E\in{\rm Div}_D(X)$, we define
	$${_{<_IE}}H=\bigcup_{E'<_IE}{_{E'}}H.$$
	Suppose $A$ is an ample line bundle on $X$ and $(H,\{{_E}H\}_{E\in{\rm Div}_D(X)},\theta)$ is a $\mu_A$-polystable locally abelian parabolic Higgs bundle with vanishing first and second parabolic Chern classes. We define the coherent sheaf $P_{E,(2)}(H)$ which is determined by the following conditions:
	\begin{enumerate}
		\item ${_{<E}}H\subset P_{E,(2)}(H)\subset {_{E}}H$. 
		\item Take $x$ to be a point on $D$, and let $(U;z_1,\dots, z_n)$ be holomorphic local coordinates on an open neighborhood $U$ of $x$ in $X$, such that $D=\{z_1\cdots z_r=0\}$. Let $D_i=\{z_i=0\}$, $i=1,\dots,r$. Let $N_i$ be the nilpotent part of the residue map ${\rm Res}_{D_i}(\theta)$ of the Higgs field along $D_i$. For any subset $I\subset\{1,\dots,r\}$, let $\{W(\sum_{i\in I}N_i)_{m}\}_{m\in\bZ}$ represent the monodromy weight filtration on ${_{E}}H|_U$ at $x$ with respect to $\sum_{i\in I}N_i$. 
		Then we have
	$$P_{E,(2)}(H)={_{<E}}H+\sum_{\emptyset\neq I\subset\{1,\dots,r\}}{_{<_IE}}H\cap\bigcap_{J\subset I}W(\sum_{i\in J}N_i)_{-\#(J)-1}$$
    on $U$.
	\end{enumerate}
	\begin{prop}\label{prop_P(2)_locally_free}
	The sheaf	$P_{E,(2)}(H)$ is a locally free $\sO_X$-module for every $E\in{\rm Div}_D(X)$.
	\end{prop}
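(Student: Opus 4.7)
The question is local on $X$, so I fix a point $x \in D$ and work in a polydisc neighborhood $(U;z_1,\dots,z_n)$ with $D \cap U = \{z_1 \cdots z_r = 0\}$ as in the statement of the proposition. The plan is to exhibit a concrete local holomorphic frame of $P_{E,(2)}(H)|_U$, built from the locally abelian structure of $(H,\{{_E}H\},\theta)$ together with the algebraic structure of the commuting nilpotent residues.

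First, I would observe that both ${_E}H|_U$ and ${_{<E}}H|_U$ are locally free $\sO_U$-modules. The former is the locally abelian hypothesis. The latter holds because the parabolic axiom ${_{E'+\epsilon D_i}}H = {_{E'}}H$ for all sufficiently small $\epsilon > 0$ implies that ${_{<E}}H$ coincides with ${_{E-\delta}}H$ for an auxiliary effective divisor $\delta$ supported on $D$ with sufficiently small coefficients; the latter is itself a member of the parabolic family and hence locally free. Consequently, $\mathrm{Gr}^{E}H := {_E}H/{_{<E}}H$ is a coherent torsion sheaf supported on $D \cap U$.

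Next, the integrability $\theta \wedge \theta = 0$ forces the nilpotent residues $N_1,\dots,N_r$ to commute pairwise on the relevant restriction spaces. By the standard theory of weight filtrations for commuting nilpotent operators (as used in Cattani-Kaplan-Schmid-type mixed Hodge theory), the filtrations $\{W(N_i)_\bullet\}$ are mutually compatible and the intersection $\bigcap_{i=1}^{r}W(N_i)_{-2}$ is a well-defined coherent subsheaf of ${_E}H|_U$. Using a locally abelian decomposition ${_\ast}H|_U \cong \bigoplus_j {_\ast}L_j$ into parabolic line bundles, I would choose an adapted holomorphic frame of ${_E}H|_U$ extending a frame of ${_{<E}}H|_U$, compute the action of each $N_i$ on $\mathrm{Gr}^{E}H|_x$ in this frame, identify the subspace of $\mathrm{Gr}^{E}H|_x$ cut out by $\bigcap_i W(N_i)_{-2}$, and lift a basis of that subspace to holomorphic sections of ${_E}H|_U$ using the adapted frame. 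Together with a frame of ${_{<E}}H|_U$, these lifts form a local frame of $P_{E,(2)}(H)|_U$.

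The main obstacle I anticipate is ensuring that the pointwise description of the weight-filtration data at $x$ propagates uniformly to a neighborhood, so that the construction yields a subsheaf of constant rank rather than merely a coherent one. This amounts to controlling the Jordan type of the commuting family $(N_1,\dots,N_r)$ in the variation of fibers over $D \cap U$. After possibly shrinking $U$ and refining the parabolic structure so that each residue $\mathrm{Res}_{D_i}(\theta)$ has constant eigenvalues along $D_i \cap U$, this constancy is a known structural feature of parabolic Higgs bundles arising from tame harmonic bundles via Theorem \ref{thm_Simpson-Mochizuki_harmonic_metric}. In particular, the norm estimate Theorem \ref{thm_tame_estimate} identifies $\bigcap_i W(N_i)_{-2}$ with the sheaf of holomorphic sections satisfying a uniform $L^2$-type growth bound with respect to the harmonic metric on $U \setminus D$, which immediately extends the pointwise description to a coherent condition of constant rank over $U$.
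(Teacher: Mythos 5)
Your outline matches the paper's proof in its essentials: both arguments are local, and both come down to exhibiting a holomorphic frame of ${_E}H$ near $x$ from which ${_{<E}}H$, the $W(N_i)_{m}$, and hence $P_{E,(2)}(H)={_{<E}}H+\bigcap_{i=1}^rW(N_i)_{-2}$ are all generated by (monomial multiples of) subsets of the frame. Your preliminary observations are fine: ${_{<E}}H={_{E-\delta}}H$ for a small auxiliary $\delta$ is locally free, and the residues commute. But the proposal has a gap at exactly the step that carries all the weight. What is needed is a \emph{single} holomorphic frame of ${_E}H$ near $x$ that is simultaneously compatible with the parabolic filtration $\{{_{E'}}H\}_{E'\leq E}$ and with each of the $r$ weight filtrations $W(N_1),\dots,W(N_r)$. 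The locally abelian decomposition only splits the parabolic structure and says nothing about the $W(N_i)$; and ``constancy of the Jordan type of each $N_i$ along $D_i$'' is a statement about each residue separately, which does not give the mutual compatibility of $W(N_1),\dots,W(N_r)$ with one another and with the parabolic filtration at a point of $D_1\cap\cdots\cap D_r$. That simultaneous compatibility is precisely what makes the sum ${_{<E}}H+\bigcap_i W(N_i)_{-2}$ locally free rather than merely coherent, and it is the nontrivial input: the paper obtains it by citing Mochizuki's structure theorem for the prolongations of a tame harmonic bundle \cite{Mochizuki20072}, which asserts the existence of such a compatible frame. Your argument must invoke that result explicitly (or reprove it); it cannot be extracted from the locally abelian hypothesis plus lifting a basis of a subspace of the fiber at $x$, since without compatibility the lifted sections need not generate the subsheaf $\bigcap_i W(N_i)_{-2}$ modulo ${_{<E}}H$ in a neighborhood.

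The closing appeal to the norm estimate does not close this gap either. An $L^2$-growth condition with respect to a singular metric cuts out a coherent subsheaf of a locally free sheaf, but such subsheaves are in general not locally free: multiplier ideal sheaves are exactly of this type and are the standard counterexample. So identifying $\bigcap_i W(N_i)_{-2}$ with an $L^2$-condition sheaf (which is the content of Proposition \ref{prop_P(2)_L2}, proved later and logically downstream of the frame statement) yields coherence but cannot, by itself, yield local freeness.
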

	\begin{proof}
		Since ${_{<E}}H\subset P_{E,(2)}(H)\subset {_{E}}H$, one can see that $P_{E,(2)}(H)|_{X\backslash D}=H|_{X\backslash D}$ is a locally free $\sO_{X\backslash D}$-module. 	
		Let $x\in D$. According to Mochizuki \cite[Corollary 4.47, Theorem 12.48]{Mochizuki20072}, there exists a set of holomorphic local frames $e_1,\dots,e_m$ of ${_{E}}H$ that are compatible with the filtrations $\{{_{E'}}H\}_{E'\leq E}$ and the monodromy weight filtrations $W(\sum_{i\in I}N_i)_\bullet$ with $I\subset\{1,\dots,r\}$. In particular, $P_{E,(2)}(H)$, as an intersection of the filtrations, is freely generated by a subset of $e_1,\dots,e_m$. This proves that the sheaf $P_{E,(2)}(H)$ is locally free at $x$.
	\end{proof}
	\section{Proof of the main theorem}
	\subsection{A meta Koll\'ar's package}
	In this section we recall  Koll\'ar's package established in \cite{SC2021_kollar}.
Let $X$ be an irreducible complex space of dimension $n$, and let $X^o\subset X_{\rm reg}$ be a dense Zariski open subset. Consider a Hermitian vector bundle $(E,h)$ on $X^o$.
The $\sO_X$-module $S_X(E,h)$ can be defined as follows. Let $U\subset X$ be an open subset. Then $S_X(E,h)(U)$ is the space of holomorphic $E$-valued $(n,0)$-forms $\alpha$ on $U\cap X^o$ such that for every point $x\in U$, there exists a neighborhood $V_x$ of $x$ such that 
$$\int_{V_x\cap X^o}\alpha\wedge_h\overline{\alpha}<\infty.$$
	\begin{lem}\emph{(Functoriality \cite[Proposition 2.5]{SC2021_kollar})}\label{lem_L2ext_birational}
	If $\pi:X'\to X$ be a proper holomorphic map between complex spaces which is biholomorphic over $X^o$, then $$\pi_\ast S_{X'}(\pi^\ast E,\pi^\ast h)=S_X(E,h).$$
	\end{lem}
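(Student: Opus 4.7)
The plan is to verify the equality of sheaves on an arbitrary open set $U\subset X$ by establishing the two inclusions separately, exploiting the observation that $X'^{o}:=\pi^{-1}(X^{o})$ is a dense Zariski open subset of $X'_{\mathrm{reg}}$ on which $\pi^{\ast}E$ and $\pi^{\ast}h$ are naturally defined, and that $\pi$ restricts to a biholomorphism $X'^{o}\to X^{o}$. The crucial analytic fact I will use repeatedly is that under a biholomorphism between Hermitian bundles on complex manifolds, pulling back holomorphic $(n,0)$-forms preserves the pointwise pairing $\alpha\wedge_{h}\overline{\alpha}$ and hence the $L^{2}$-integral.

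For the inclusion $S_{X}(E,h)(U)\subseteq\pi_{\ast}S_{X'}(\pi^{\ast}E,\pi^{\ast}h)(U)$, I would, given $\alpha\in S_{X}(E,h)(U)$, define $\alpha':=\pi^{\ast}\alpha$ on $\pi^{-1}(U)\cap X'^{o}$. For each $x'\in\pi^{-1}(U)$, set $x:=\pi(x')$, pick a neighborhood $V_{x}\subset U$ of $x$ with $\int_{V_{x}\cap X^{o}}\alpha\wedge_{h}\overline{\alpha}<\infty$, and let $V_{x'}:=\pi^{-1}(V_{x})$. Since $\pi$ restricts to a biholomorphism $V_{x'}\cap X'^{o}\to V_{x}\cap X^{o}$, the two $L^{2}$-integrals coincide, so $\alpha'$ satisfies the local finiteness condition at $x'$ and lies in $S_{X'}(\pi^{\ast}E,\pi^{\ast}h)(\pi^{-1}(U))$.

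The reverse inclusion is where properness of $\pi$ enters. Given $\alpha'\in S_{X'}(\pi^{\ast}E,\pi^{\ast}h)(\pi^{-1}(U))$, I would transport it along the biholomorphism $\pi|_{X'^{o}}$ to obtain a holomorphic $E$-valued $(n,0)$-form $\alpha$ on $U\cap X^{o}$. To verify the local $L^{2}$-condition at an arbitrary $x\in U$, I use that $\pi^{-1}(x)$ is compact; cover it by finitely many open neighborhoods $V_{x'_{1}},\dots,V_{x'_{k}}\subset\pi^{-1}(U)$ given by the definition of $S_{X'}$ on each of which $\alpha'$ has finite $L^{2}$-norm, and extract via the tube-lemma consequence of properness an open neighborhood $V_{x}\subset U$ of $x$ with $\pi^{-1}(V_{x})\subset\bigcup_{i=1}^{k}V_{x'_{i}}$. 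The biholomorphism identity then yields
$$\int_{V_{x}\cap X^{o}}\alpha\wedge_{h}\overline{\alpha}=\int_{\pi^{-1}(V_{x})\cap X'^{o}}\alpha'\wedge_{\pi^{\ast}h}\overline{\alpha'}\le\sum_{i=1}^{k}\int_{V_{x'_{i}}\cap X'^{o}}\alpha'\wedge_{\pi^{\ast}h}\overline{\alpha'}<\infty.$$

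The main (and rather mild) obstacle is the properness-based extraction of the neighborhood $V_{x}$ in the second half, which requires care in the complex-analytic category to ensure $V_{x}$ can be taken inside $U$; everything else reduces to the statement that biholomorphisms are isometries for the $L^{2}$-pairing on $(n,0)$-forms with values in a Hermitian vector bundle, and that holomorphicity transfers across the biholomorphism $\pi|_{X'^{o}}$.
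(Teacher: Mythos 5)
Your argument is correct and is the standard proof of this functoriality statement (the paper itself does not reprove it here but simply cites \cite[Proposition 2.5]{SC2021_kollar}, where the same two-step verification is carried out). The forward inclusion via invariance of the $L^2$-pairing under the biholomorphism $\pi|_{\pi^{-1}(X^o)}$, and the reverse inclusion via compactness of $\pi^{-1}(x)$ together with the tube-lemma consequence of properness (which indeed produces $V_x\subset U$, since $\pi$ proper implies $\pi$ closed, so $U\setminus\pi\bigl(\pi^{-1}(U)\setminus\bigcup_{i=1}^{k}V_{x'_i}\bigr)$ is the required neighborhood), are exactly the two ingredients needed.
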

	\begin{lem}\emph{(\cite[Lemma 2.6]{SC2021_kollar})}\label{lem_Kernel}
	If $(F,h_F)$ be a Hermitian vector bundle on $X$ (in particular $h_F$ is smooth on $X$), then
		$$S_X(E,h)\otimes F\simeq S_X(E\otimes F|_{X^o},h\otimes h_{F}|_{X^o}).$$
	\end{lem}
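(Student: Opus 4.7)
The plan is to construct natural morphisms of $\sO_X$-modules in both directions and verify they are mutually inverse, with the $L^2$ bounds in both directions coming from the fact that $h_F$ is smooth and positive-definite on the whole of $X$, hence bounded above and below on every relatively compact open subset.

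For the forward map $S_X(E,h)\otimes_{\sO_X} F \to S_X(E\otimes F, h\otimes h_F)$, I would send a decomposable local section $\alpha\otimes f$ (with $\alpha$ a local section of $S_X(E,h)$ and $f\in\Gamma(U,F)$) to the holomorphic $(E\otimes F)$-valued $(n,0)$-form $\alpha\otimes f$ on $U\cap X^o$. To check the $L^2$ condition, fix $x\in U$ and choose a relatively compact neighborhood $V_x\Subset U$ on which the defining integrability of $\alpha$ holds. Since $h_F$ is continuous on all of $X$, $\sup_{V_x}|f|^2_{h_F}<\infty$, and the pointwise factorization of the metric yields
$$\int_{V_x\cap X^o}(\alpha\otimes f)\wedge_{h\otimes h_F}\overline{\alpha\otimes f}\;\le\;\sup_{V_x}|f|^2_{h_F}\cdot\int_{V_x\cap X^o}\alpha\wedge_h\overline{\alpha}\;<\;\infty.$$
Extending $\sO_X$-bilinearly defines the morphism on stalks and then globally.

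For the inverse, I would work locally. Shrinking $U$, pick a holomorphic trivializing frame $f_1,\dots,f_r$ of $F$ on $U$, so that any $\beta\in S_X(E\otimes F,h\otimes h_F)(U)$ decomposes uniquely as $\beta=\sum_i\alpha_i\otimes f_i$ with each $\alpha_i$ a holomorphic $E$-valued $(n,0)$-form on $U\cap X^o$. The Gram matrix $G=(h_F(f_i,f_j))$ is smooth and positive definite on $U$, so on every relatively compact $V_x\Subset U$ there exists $c>0$ with $G\ge c\cdot I$. This gives a pointwise majorization of $(n,n)$-forms, schematically $\sum_i\alpha_i\wedge_h\overline{\alpha_i}\le c^{-1}\beta\wedge_{h\otimes h_F}\overline{\beta}$, from which the local $L^2$ condition on $\beta$ transfers to each $\alpha_i$. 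Hence $\alpha_i\in S_X(E,h)(U)$, and $\sum_i\alpha_i\otimes f_i$ defines the required preimage in $S_X(E,h)(U)\otimes F(U)$. A change of trivializing frame is a holomorphic invertible matrix, so the two local constructions glue, and by construction the two morphisms are mutually inverse on decomposable sections, which is enough by local generation.

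The only real technical point is the two-sided control of $h_F$: the upper bound to push $L^2$ sections forward, and the lower bound (positive definiteness of $G$ on compacta) to pull them back. Both are automatic because $F$ is an honest vector bundle on $X$ equipped with a smooth metric, so there is no singular behavior to contend with, and the hypothesis that $h_F$ be smooth on $X$ (not just on $X^o$) is precisely what is used. This is the step I would expect to play the role of "main obstacle", although it is mild; the rest is formal sheaf-theoretic bookkeeping, and Lemma \ref{lem_L2ext_birational}-style functoriality is not required here since we stay on $X$.
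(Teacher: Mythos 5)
Your argument is correct and is the standard one: the two-sided local bounds on $h_F$ (upper bound for the forward map, positive-definiteness of the Gram matrix on compacta for the inverse) are exactly what is needed, and the pointwise inequality $\sum_i\alpha_i\wedge_h\overline{\alpha_i}\le c^{-1}\beta\wedge_{h\otimes h_F}\overline{\beta}$ holds since $\mathrm{tr}\bigl((G-cI)M\bigr)\ge 0$ for the two positive semidefinite matrices involved. The paper itself gives no proof here (it only cites \cite[Lemma 2.6]{SC2021_kollar}), and your proof matches the expected argument for that lemma.
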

	\begin{defn}\label{defn_tame_Hermitian_bundle}
	A Hermitian vector bundle $(E,h)$ is called \emph{tame on $X$} if, for every point $x\in X$, there is an open neighborhood $U$ containing $x$, a proper bimeromorphic morphism $\pi:\widetilde{U}\to U$ which is biholomorphic over $U\cap X^o$, and a Hermitian vector bundle $(Q,h_Q)$ on $\widetilde{U}$ such that 
		\begin{enumerate}
			\item $\pi^\ast E|_{\pi^{-1}(X^o\cap U)}\subset Q|_{\pi^{-1}(X^o\cap U)}$ as a subsheaf.
			\item There is a Hermitian metric $h'_Q$ on $Q|_{\pi^{-1}(X^o\cap U)}$ so that $h'_Q|_{\pi^\ast E}\sim \pi^\ast h$ on $\pi^{-1}(X^o\cap U)$ and
			\begin{align}\label{align_tame}
				(\sum_{i=1}^r\| f_i\|^2)^ch_Q\lesssim h'_Q
			\end{align}
			for some $c\in\bR$. Here $\{f_1,\dots,f_r\}$ is an arbitrary set of local generators of the ideal sheaf defining $\widetilde{U}\backslash \pi^{-1}(X^o)\subset \widetilde{U}$.
		\end{enumerate}
	\end{defn}
The tameness condition (\ref{align_tame}) is independent of the choice of the set of local generators. In this article, we construct a tame Hermitian vector bundle $(E,h)$ as a subsheaf of a tame harmonic bundle. In this case, Condition (\ref{align_tame}) is derived from the asymptotic behavior of the harmonic metric (Theorem \ref{thm_tame_estimate}).
	\begin{thm}\label{thm_abstract_Kollar_package}
		Let $f: X \rightarrow Y$ be a proper, locally K\"ahler morphism between irreducible complex spaces. Let $X^o \subset X_{\rm reg}$ be a dense Zariski open subset, and $(E,h)$ be a Hermitian vector bundle on $X^o$ with Nakano semi-positive curvature. If $(E,h)$ is tame on $X$, then the sheaf $S_X(E,h)$ is a coherent sheaf and satisfies Koll\'ar's package with respect to $f:X\to Y$.
	\end{thm}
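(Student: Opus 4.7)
The approach is an $L^2$-method combining Andreotti-Vesentini-H\"ormander theory with Takegoshi-type harmonic-form techniques, all adapted to the singular setting $X^o \subset X$ via the tameness hypothesis. Working locally on $Y$, I would first use the local K\"ahler assumption to fix a K\"ahler form on $f^{-1}(V)$ for a suitable Stein open $V \subset Y$, and then compute the direct image $R^qf_\ast S_X(E,h)(V)$ as an $L^2$-Dolbeault cohomology on $f^{-1}(V) \cap X^o$ in top horizontal degree (so that the integrand is a top-degree form and automatically $\bar\partial$-closed in the holomorphic variables). The functoriality Lemma \ref{lem_L2ext_birational} permits reduction to a resolution of singularities when convenient, while the tameness estimate (\ref{align_tame}) provides the analytic control needed at the boundary $X \setminus X^o$.

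For the \emph{coherence} of $S_X(E,h)$, I would work locally around a point of $X \setminus X^o$. The tameness hypothesis exhibits $S_X(E,h)|_U$ as the pushforward from a resolution $\pi:\widetilde U \to U$ of a subsheaf of the honest holomorphic bundle $Q$ whose $L^2$ condition is cut out by finitely many weight inequalities against $\sum_i \|\pi^\ast f_i\|^2$. Coherence then follows from Grauert's direct image theorem combined with the standard fact that the $L^2$-norm condition defines a coherent subsheaf when the metric grows at worst polynomially along a simple normal crossing divisor.

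For the \emph{torsion freeness}, \emph{vanishing}, and \emph{injectivity} portions, the core mechanism is the Bochner-Kodaira-Nakano inequality on the open part $X^o$: for an $L^2$-harmonic representative $\alpha$ of a class in $R^qf_\ast S_X(E,h)$, Nakano semi-positivity of $h$ together with H\"ormander's solvability of the $\bar\partial$-equation with weighted estimates produces a harmonic extension across any prescribed analytic subset, yielding torsion freeness. Twisting by the ample bundle $L$ on $Y$ (pulled back and combined with a large power of an ample on $X$ before letting a scaling parameter tend to infinity, \`a la Koll\'ar) gives the vanishing theorem on $Y$. The injectivity theorem is obtained via Enoki's trick: if $s$ is a holomorphic section of a semi-positive line bundle $L^{\otimes l}$ and $s\cdot\alpha$ is $\bar\partial$-exact for a harmonic $\alpha$, then the Bochner identity (with the curvature contribution of $L$ being nonnegative) forces $s \cdot \alpha$ to be itself harmonic and in fact zero, and hence $\alpha$ vanishes on the open set $\{s \neq 0\} \cap X^o$; unique continuation of harmonic $L^2$-sections then yields $\alpha \equiv 0$.

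The hardest part will be the \emph{decomposition theorem} $Rf_\ast S_X(E,h) \simeq \bigoplus_q R^qf_\ast S_X(E,h)[-q]$ in $D(Y)$. I would follow a Koll\'ar-Saito-style strategy: upgrade the injectivity theorem to produce, on sufficiently small Stein opens in $Y$, explicit splittings of the Leray filtration using $L^2$-harmonic representatives, and then glue these splittings via the principle that harmonic representatives are canonical. The subtle point is reconciling the harmonic analysis carried out on the quasi-projective locus $X^o$ with the coherent-sheaf structure on the possibly singular ambient $X$ and the possibly singular target $Y$, so that the abstract splitting morphism is sheaf-theoretic rather than merely pointwise; the tameness hypothesis is engineered precisely to bridge this gap, but verifying functoriality of the splitting under restriction to open subsets of $Y$, and independence from auxiliary choices (K\"ahler form, resolution, cut-off), is the main technical obstacle.
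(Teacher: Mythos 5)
The paper does not prove this theorem in situ: its entire ``proof'' is a citation to \cite{SC2021_kollar} (Proposition 2.9 and \S 4), where the authors develop exactly the $L^2$-theoretic machinery you describe. So your overall strategy --- Andreotti--Vesentini--H\"ormander estimates on $X^o$, Takegoshi-type harmonic theory over a Stein base, Enoki's trick for injectivity, and a harmonic-representative splitting for the decomposition --- coincides with the strategy of the cited reference, and your identification of the decomposition theorem as the delicate step is accurate.

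As a proof, however, your outline has a genuine gap at its foundation. Every one of your harmonic-theoretic arguments presupposes that $R^qf_\ast S_X(E,h)$ over a Stein open $V\subset Y$ is computed by the $L^2$-Dolbeault cohomology of $E$-valued $(n,q)$-forms on $f^{-1}(V)\cap X^o$ with respect to a complete K\"ahler metric there. This requires an $L^2$-Dolbeault lemma: the complex of sheaves on $X$ of locally $L^2$ $E$-valued $(n,\bullet)$-forms (with the boundary behaviour at $X\setminus X^o$ dictated by $h$ and the complete metric) must be shown to be a fine resolution of $S_X(E,h)$. This is precisely where the tameness condition is consumed, it is the technical heart of \cite{SC2021_kollar}, and it cannot be waved through by saying the integrand is ``automatically $\bar\partial$-closed in top degree'' --- exactness of the $L^2$-complex near $X\setminus X^o$ is the issue, not closedness. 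Two secondary points: your torsion-freeness mechanism (``harmonic extension across any prescribed analytic subset'') is not how the argument runs --- one shows instead that a section of $R^qf_\ast S_X(E,h)$ supported on a proper analytic subset of $Y$ admits a harmonic representative which the a priori estimate coming from Nakano semi-positivity forces to vanish; and for the decomposition one does not glue local splittings, but exhibits over each Stein $V$ an explicit morphism of complexes from the direct sum of harmonic spaces into the $L^2$-Dolbeault complex, using that harmonic $(n,q)$-forms are annihilated by the relevant adjoint operators; independence of the auxiliary choices then follows from the canonicity of harmonic projection rather than being an additional obstacle.
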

    \begin{proof}
    	See \cite[Proposition 2.9 and \S 4]{SC2021_kollar}.
    \end{proof}
	\subsection{Preliminary for $\bR$-divisors}
	In this section, we will clarify some concepts regarding $\mathbb{R}$-divisors that should be familiar to experts.
	Let $X$ be a projective variety. An $\mathbb{R}$-Cartier divisor is a formal sum $A=\sum_{i=1}^r a_iA_i$, where $A_1,\dots, A_r$ are integral Cartier divisors and $a_1,\dots,a_r \in \mathbb{R}$. Given two $\mathbb{R}$-Cartier divisors, $A$ and $B$, we say that they are $\mathbb{R}$-linearly equivalent, denoted by $A \simeq_{\mathbb{R}} B$, if there exist rational functions $f_1,\cdots,f_m$ and real numbers $r_1,\dots,r_m$ such that $A-B = \sum_{i=1}^m r_i(f_i)$, where $(f_i)$ represents the principal divisor defined by $f_i$.
	Now, let $L$ be a line bundle and $A$ be an $\mathbb{R}$-Cartier divisor. We also use the notation $L \simeq_{\mathbb{R}} A$ when $(s) \simeq_{\mathbb{R}} A$ for some global rational section $s$ of $L$.
	\begin{defn}\label{defn_semipositive_divisor}
		Let $A=\sum_{i=1}^ra_iA_i$ be an $\bR$-Cartier divisor on $X$. 
		A $C^\infty$ Hermitian metric $h=(h_1,\dots,h_r)$ on $A$ consists of a $C^\infty$ Hermitian metric $h_i$ on $\sO_X(A_i)$ for each $i=1,\dots,r$. The curvature of $h$ is defined as $$\Theta_h(A):=\sum_{i=1}^ra_i\Theta_{h_i}(A_i).$$
		The $\bR$-divisor $A$ is called \emph{semi-positive} if there is a $C^\infty$ Hermitian metric $h$ on $A$ such that $\sqrt{-1}\Theta_h(A)$
		is semi-positive.
	\end{defn}
	The semipositivity of $\bR$-Cartier divisors is preserved under $\bR$-linear equivalence.
	\begin{prop}
		Let $A$ and $B$ be two $\bR$-Cartier divisors on a smooth projective variety $X$ such that $A\simeq_{\bR} B$. If  $A$ is semi-positive, then so is $B$.
	\end{prop}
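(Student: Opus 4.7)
The plan is to reduce the statement to the global $\partial\bar\partial$-lemma on the compact K\"ahler manifold $X$, leveraging the fact that $\bR$-linear equivalence preserves the real first Chern class. \textbf{Step 1.} First I would verify that $c_1(A) = c_1(B)$ in $H^2(X,\bR)$, where $c_1$ is the $\bR$-linear extension to $\bR$-divisors of the usual Chern class on integral divisors. Writing $A - B = \sum_i r_i (f_i)$ with $r_i \in \bR$ and rational functions $f_i$ on $X$, each principal divisor $(f_i)$ defines the trivial line bundle on $X$ (trivialized by $1/f_i$) and hence has vanishing Chern class. By $\bR$-linearity, $c_1(A) - c_1(B) = \sum_i r_i c_1(\sO_X((f_i))) = 0$.

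\textbf{Step 2.} Fix the given semi-positive $C^\infty$ Hermitian metric $h_A$ on $A$ and choose an arbitrary $C^\infty$ Hermitian metric $h_{B,0}$ on $B$ (in the sense of Definition \ref{defn_semipositive_divisor}). The curvature forms $\sqrt{-1}\Theta_{h_A}(A)$ and $\sqrt{-1}\Theta_{h_{B,0}}(B)$ are then closed real $(1,1)$-forms representing the same class in $H^2(X,\bR)$, by Step 1 together with Chern-Weil. Since $X$ is smooth projective and hence K\"ahler, the global $\partial\bar\partial$-lemma yields a real smooth function $\varphi$ on $X$ with
$$\sqrt{-1}\Theta_{h_A}(A) - \sqrt{-1}\Theta_{h_{B,0}}(B) = \sqrt{-1}\ddbar\varphi.$$

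\textbf{Step 3.} I would finish by absorbing $\varphi$ into one component of the metric on $B$. Write $B = \sum_{j=1}^s b_j B_j$; if $B = 0$ the conclusion is immediate, so after discarding zero-coefficient terms I may assume $b_1 \neq 0$. Replace $h_{1,0}$ by $h_1 := h_{1,0}\cdot e^{-\varphi/b_1}$ and keep the remaining $h_{j,0}$ unchanged. The resulting metric $h_B := (h_1, h_{2,0},\ldots,h_{s,0})$ on $B$ then satisfies
$$\sqrt{-1}\Theta_{h_B}(B) = \sqrt{-1}\Theta_{h_{B,0}}(B) + b_1\cdot\sqrt{-1}\ddbar(\varphi/b_1) = \sqrt{-1}\Theta_{h_A}(A) \geq 0,$$
so $B$ is semi-positive. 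There is no substantial obstacle: the only nontrivial analytic input is the $\partial\bar\partial$-lemma, and the only mild subtlety is selecting a nonzero coefficient $b_1$ into which to place the potential $\varphi$.
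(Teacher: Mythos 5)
Your proof is correct and follows essentially the same route as the paper: both arguments observe that $\bR$-linear equivalence forces the curvature forms to lie in the same cohomology class, invoke the $\ddbar$-lemma to produce a potential $\varphi$, and absorb $e^{-\varphi}$ into one component of the metric on $B$. Your Step 3 is in fact slightly more careful than the paper's, since you rescale by $e^{-\varphi/b_1}$ to account for the coefficient $b_1$ in the curvature formula $\Theta_h(B)=\sum_j b_j\Theta_{h_j}(B_j)$ and you handle the degenerate case $B=0$, whereas the paper simply sets $h_{B_1}:=e^{-\varphi}h'_1$.
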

	\begin{proof}
		Assuming that $A=\sum_{i=1}^ra_iA_i$, we have $h_i$ as $C^\infty$ Hermitian metrics on $\sO_X(A_i)$ for $i=1,\dots,r$ such that the curvature  $\sqrt{-1}\Theta_h(A):=\sqrt{-1}\sum_{i=1}^ra_i\Theta_{h_i}(A_i)$ is semi-positive. Now, let $B=\sum_{i=1}^sb_iB_i$ and $h'_i$ be an arbitrary $C^\infty$ Hermitian metric on $\sO_X(B_i)$ for $i=1,\dots,s$. Since $A\simeq_{\bR} B$, it follows that $[\sqrt{-1}\Theta_h(A)]=[\sqrt{-1}\Theta_h(B)]\in H^{1,1}(X,\bC)$, where $\sqrt{-1}\Theta_h(B)=\sqrt{-1}\sum_{i=1}^sb_i\Theta_{h'_i}(B_i)$. 
		According to the $\ddbar$-lemma, there exists a $C^\infty$ function $\varphi$ on $X$ such that $\Theta_h(A)=\Theta_h(B)+\ddbar(\varphi)$. By defining $h_{B_1}:=e^{-\varphi}h'_1$ and $h_{B_i}:=h'_i$ for $i=2,\dots,s$, we can show that $B$ is semi-positive.
	\end{proof}
A direct consequence is as follows: Let $L$ be a line bundle and $s$ be a global rational section of $L$. Then, $L$ is semi-positive if and only if $(s)$ is semi-positive.

Now, consider $A=\sum_{i=1}^r a_i A_i$, an $\mathbb{R}$-Cartier divisor on $X$. For each $i$, let $h_i$ be an arbitrary $C^\infty$ Hermitian metric on $\mathcal{O}_X(A_i)$. We define
\begin{align*}
	\varphi_A:=\sum_{i=1}^r a_i \log|s_i|_{h_i},
\end{align*}
where $s_i\in H^0(X,\mathcal{O}_X(A_i))$ is a defining section of $A_i$.
	\begin{defn}\label{defn_weight_A}
		 Such $\varphi_A$ is called a weight function associated with $A$.
	\end{defn}
Given different $C^\infty$ Hermitian metrics $h'_i$ and defining sections $s'_i$, there exists a constant $C>0$ such that
\begin{align}\label{align_weight_A}
	\sum_{i=1}^r a_i \log|s'_i|_{h'_i}-C \leq \sum_{i=1}^r a_i \log|s_i|_{h_i} \leq \sum_{i=1}^r a_i \log|s'_i|_{h'_i}+C.
\end{align}
   It follows that the quasi-isometric class\footnote{Two functions $f$ and $g$ are called quasi-isometric if there exists a positive constant $C$ such that $C^{-1}f\leq g\leq Cf $.} of $\exp(-2\varphi_A)$ is independent of the choice of $h_i$ and $s_i$.
    
    In the end of this section we discuss singular Hermitian metrics associated with an $\bR$-divisor.
    \begin{lem}\label{lem_singular_metric_R_div}
    	Let $A$ and $B$ be two $\mathbb{R}$-Cartier divisors on a smooth projective variety $X$. Let $L$ be a line bundle on $X$ such that $L \simeq_{\mathbb{R}} A+B$. Let $\varphi_A$ be a weight function associated with $A$ and let $h_B$ be a $C^\infty$ Hermitian metric on $B$. Then, there exists a singular Hermitian metric $h$ on $L$ that satisfies the following conditions:
    	\begin{enumerate}
    	 \item $h$ is smooth over $X\backslash{\rm supp}(A)$;
    	\item $\Theta_{h}(L|_{X\backslash{\rm supp}(A)})=\Theta_{h_B}(B)|_{X\backslash{\rm supp}(A)}$;
    	\item $|e|_h\sim\exp(-\varphi_A)$ for  a local generator $e$ of $L$.
    	\end{enumerate}
    \end{lem}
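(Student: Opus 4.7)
The plan is to construct $h$ as a smooth reference metric on $L$ multiplied by a weight of the form $\exp(-2(\varphi_A+\eta))$, where $\eta$ is a smooth function on $X$ supplied by the $\ddbar$-lemma. Fix any $C^\infty$ Hermitian metric $h_L$ on $L$, and fix smooth metrics $h_i$ on $\sO_X(A_i)$ together with defining sections $s_i$ used to build $\varphi_A=\sum_i a_i\log|s_i|_{h_i}$; write $\Theta_{h_A}(A):=\sum_i a_i\Theta_{h_i}(A_i)$. With the ansatz $h := h_L\cdot\exp(-2(\varphi_A+\eta))$, conditions~(1) and~(3) will be automatic: $h$ is smooth on $X\setminus{\rm supp}(A)$ because $\varphi_A$ is, and for any local generator $e$ of $L$ one has $|e|_h=|e|_{h_L}e^{-\eta}\exp(-\varphi_A)$ with $|e|_{h_L}e^{-\eta}$ smooth and strictly positive on the compact manifold $X$. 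Everything therefore reduces to choosing $\eta$ so that condition~(2) holds.

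A standard Poincar\'e--Lelong computation gives $\ddbar\varphi_A=-\tfrac12\Theta_{h_A}(A)$ on $X\setminus{\rm supp}(A)$, so on that open set
$$\Theta_h(L) = \Theta_{h_L}(L) + 2\ddbar(\varphi_A+\eta) = \Theta_{h_L}(L) - \Theta_{h_A}(A) + 2\ddbar\eta.$$
Thus condition~(2) becomes the global smooth equation $2\ddbar\eta=\Theta_{h_B}(B)+\Theta_{h_A}(A)-\Theta_{h_L}(L)$ on $X$; the right-hand side is a smooth real $(1,1)$-form extending across ${\rm supp}(A)$ because $h_L$, $h_i$, $h_B$ are all smooth.

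The only non-routine step is the cohomological check that this right-hand side is $\ddbar$-exact. By the definition of $\bR$-linear equivalence recalled just before the statement, $L\simeq_{\bR} A+B$ means there exist a global rational section $s$ of $L$ and a relation $(s)-A-B=\sum_j r_j(f_j)$ with $r_j\in\bR$ and $f_j$ rational functions; since principal divisors vanish in $H^2(X,\bR)$ this gives $c_1(L)=\sum_i a_i c_1(\sO_X(A_i))+c_1(\sO_X(B))$, which is exactly the statement that $\Theta_{h_B}(B)+\Theta_{h_A}(A)-\Theta_{h_L}(L)$ represents the zero de Rham class. Since $X$ is a compact K\"ahler manifold, the $\ddbar$-lemma then yields a smooth real $\eta$ solving the equation, completing the construction. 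I expect the only subtlety to be tracking signs and the factor $1/2$ in the Poincar\'e--Lelong step; the rest is a direct adaptation of the $\ddbar$-lemma argument already used in the preceding proposition on semi-positivity under $\bR$-linear equivalence.
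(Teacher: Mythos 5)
Your proposal is correct and follows essentially the same route as the paper: both fix a smooth reference metric on $L$, use the cohomological identity $[\Theta(L)]=[\Theta(A)]+[\Theta(B)]$ coming from $L\simeq_{\bR}A+B$ together with the $\ddbar$-lemma to produce a smooth potential, and then twist the reference metric by $\exp$ of that potential minus the weight $\varphi_A$; the only differences are normalization conventions (your factor of $2$ versus the paper's use of $\log|s_i|^2_{h_i}$). Your explicit Poincar\'e--Lelong bookkeeping is consistent with the paper's computation.
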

    \begin{proof}
    	Let $A = \sum_{i=1}^r a_iA_i$ and $h_A = (h_1, \dots, h_r)$ be a smooth Hermitian metric on $A$. Let $s_i \in H^0(X, \sO_X(A_i))$ be the defining section of $A_i$. We can assume that $\varphi_A = \sum_{i=1}^r a_i \log|s_i|^2_{h_i}$. Let $h_0$ be a smooth Hermitian metric on $L$. Since $L \simeq_{\mathbb{R}} A+B$, we have
    	$$[\sqrt{-1}\Theta_{h_0}(L)] = [\sqrt{-1}\Theta_{h_A}(A) + \sqrt{-1}\Theta_{h_B}(B)] \in H^{1,1}(X,\mathbb{C}).$$
    	According to the $\partial\bar{\partial}$-lemma, there exists a smooth function $\psi$ on $X$ such that
    	$$\Theta_{h_0}(L) = \Theta_{h_A}(A) + \Theta_{h_B}(B) + \partial\bar{\partial}(\psi).$$
    	Then the metric $h = \exp(\psi-\varphi_A)h_0$ satisfies all the required conditions in the lemma.
    \end{proof}
	\subsection{Proof of Theorem \ref{thm_main}}
	The main strategy in proving Theorem \ref{thm_main} is to give an $L^2$ interpretation of $\omega_X\otimes (P{_{D-N,(2)}}(H)\cap j_\ast K)\otimes F\otimes L$ (Corollary (\ref{cor_L2_interpretation})) and use Theorem \ref{thm_abstract_Kollar_package}. The remaining section will provide the details of the proof.
	\subsubsection{}\label{section_setting}
	Let $X$ be a smooth projective variety and $D$ a reduced simple normal crossing divisor on $X$. Let $(H,\{{_E}H\}_{E\in{\rm Div}_D(X)},\theta)$ be a locally abelian parabolic Higgs bundle on $(X,D)$ with trivial first and second parabolic Chern classes, which is polystable with respect to an ample line bundle $A$ on $X$. Let $h$ be a tame harmonic metric on $H|_{X\backslash D}$, which is compatible with the parabolic structure. The existence of such a metric is ensured by Theorem \ref{thm_Simpson-Mochizuki_harmonic_metric}. Let $\overline{\theta}$ be the adjoint of $\theta$, and let $\partial$ be the unique $(1,0)$-connection such that $\partial+\dbar$ is compatible with $h$. Then $(H|_{X\backslash D}\otimes_{\sO_{X\backslash D}}\sA^0_{X\backslash D},\nabla:=\partial+\dbar+\theta+\overline{\theta})$ determines a meromorphic flat connection, which is regular along the divisor $D$. Let $\nabla=\nabla^{1,0}+\nabla^{0,1}$ be the decomposition with respect to the bi-degree. Notice that $\nabla^{1,0}=\partial+\theta$ and $\nabla^{0,1}=\dbar+\overline{\theta}$.
	
	Let $K\subset H|_{X\backslash D}$ be a locally free subsheaf such that $\nabla^{0,1}(K)=0$ and $(\nabla-\theta)(K)\subset K\otimes\sA^{1,0}_{X\backslash D}$.
	\begin{lem}\label{lem_semipositive}
		$\sqrt{-1}\Theta_{h}(K)$ is Nakano semi-positive.
	\end{lem}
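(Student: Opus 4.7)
The plan is to exploit the two conditions on $K$ to reduce the Chern curvature of $(K, h|_K)$ to a clean expression in terms of the Higgs field $\theta$, and then to read off Nakano semi-positivity by a direct contraction.

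First, I would translate the two hypotheses into operator statements. For any local holomorphic (i.e.\ $\dbar$-holomorphic) section $s$ of the coherent subsheaf $K \subset H|_{X\backslash D}$ one has $\dbar s = 0$, so $\nabla^{0,1}(s) = \dbar s + \overline{\theta}(s) = \overline{\theta}(s)$; the holomorphicity hypothesis $\nabla^{0,1}(K) = 0$ therefore forces $\overline{\theta}|_K = 0$. With this in hand, the weak transversality condition $(\nabla - \theta)(s) \in K\otimes \sA^{1,0}_{X\backslash D}$ becomes $\partial s \in K\otimes \sA^{1,0}_{X\backslash D}$, that is, $\partial$ also preserves $K$. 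Together with the tautological preservation of $K$ by $\dbar$, the Chern connection $\partial + \dbar$ of $(H,h)$ restricts to a connection on $K$ that is metric-compatible for $h|_K$ and whose $(0,1)$-part is $\dbar|_K$; by uniqueness this restriction must be the Chern connection of $(K, h|_K)$, and hence $\Theta_h(K) = \Theta_h(H)|_K$ as an $\mathrm{End}(K)$-valued $(1,1)$-form on sections of $K$.

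Next, I would substitute into the Hitchin--Simpson self-dual equation (\ref{align_self-dual equation}), $\Theta_h(H) = -\theta\wedge\overline{\theta} - \overline{\theta}\wedge\theta$. Because $\overline{\theta}|_K = 0$, the term $\theta\wedge\overline{\theta}$ annihilates every section of $K$, leaving the key identity
\[
\Theta_h(K)(s) \;=\; -(\overline{\theta}\wedge\theta)(s),\qquad s \in K.
\]

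Finally, I would read off Nakano semi-positivity by a local computation. Writing $\theta = \sum_i \theta_i\,dz^i$ and $\overline{\theta} = \sum_j \theta_j^{\ast}\,d\overline{z}^{\,j}$, with $\theta_j^{\ast}$ the pointwise $h$-adjoint of $\theta_j$, the identity above expands to $\Theta_h(K)(s) = \sum_{i,j} (\theta_j^{\ast}\theta_i)(s)\,dz^i \wedge d\overline{z}^{\,j}$. Pairing with $u = \sum_{i,\alpha} u^{i\alpha}\,\partial/\partial z^i \otimes e_\alpha \in T^{1,0}_x X \otimes K_x$ in a local frame $\{e_\alpha\}$ of $K$ and using $h(\theta_j^{\ast} v, w) = h(v, \theta_j w)$, one gets
\[
\bigl\langle \sqrt{-1}\,\Theta_h(K)\,u,\,u\bigr\rangle_h \;=\; \sum_{i,j,\alpha,\beta} h(\theta_i e_\alpha,\,\theta_j e_\beta)\,u^{i\alpha}\overline{u^{j\beta}} \;=\; \Bigl|\sum_{i,\alpha} u^{i\alpha}\,\theta_i(e_\alpha)\Bigr|_h^2 \;\geq\; 0,
\]
which is the desired Nakano semi-positivity of $\sqrt{-1}\Theta_h(K)$. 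The whole argument is a formal computation once $\overline{\theta}|_K = 0$ and $\partial(K)\subset K\otimes\Omega_{X\backslash D}^{1,0}$ have been extracted from the hypotheses, so the only step that requires real care is this first translation.
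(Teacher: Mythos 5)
Your proposal is correct and follows essentially the same route as the paper: extract $\overline{\theta}|_K=0$ and $\partial(K)\subset K\otimes\sA^{1,0}_{X\backslash D}$ from the hypotheses, deduce that the second fundamental form vanishes so $\Theta_h(K)=\Theta_h(H)|_K$, and then apply the self-dual equation to get $\sqrt{-1}\Theta_h(K)=-\sqrt{-1}\,\overline{\theta}\wedge\theta|_K$. The only difference is that you spell out the final contraction showing $-\sqrt{-1}\,\overline{\theta}\wedge\theta$ is Nakano semi-positive, which the paper simply asserts.
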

	\begin{proof}
		The assumptions imply that $\partial(K) = (\nabla-\theta)(K) \subset K \otimes \sA^{1,0}_{X \backslash D}$. Hence the second fundamental form of $K \subset H|_{X \backslash D}$ vanishes and we have $\Theta_{h}(K) = \Theta_{h}(H|_{X \backslash D})|_K$. Moreover, since $\overline{\theta}(K) = \nabla^{0,1}(K) - \dbar(K) = 0$, we can use the curvature formula (\ref{align_self-dual equation}) to obtain:
		\begin{align*}
			\sqrt{-1}\Theta_{h}(K) = -\sqrt{-1}\theta\wedge\overline{\theta}|_K - \sqrt{-1}\overline{\theta}\wedge\theta|_K = -\sqrt{-1}\overline{\theta}\wedge\theta|_K,
		\end{align*}
		which is Nakano semi-positive.
	\end{proof}
	Let $(F,h_F)$ be an arbitrary Nakano semi-positive Hermitian vector bundle on $X$. Let $L$ be a line bundle on $X$ such that $L\simeq_{\mathbb{R}}B+N$, where $B$ is a semi-positive $\mathbb{R}$-divisor and $N$ is an $\mathbb{R}$-divisor on $X$ which is supported on $D$. Let $\varphi_N$ be a weight function associated with $N$. By Lemma \ref{lem_singular_metric_R_div}, there is a singular Hermitian metric $h_L$ on $L$ such that the following conditions hold:
	\begin{enumerate}
		\item $h_L$ is smooth over $X\backslash{\rm supp}(N)$.
		\item \begin{align}\label{align_L_0}
			\sqrt{-1}\Theta_{h_L}(L|_{X\backslash{\rm supp}(N)})=\sqrt{-1}\Theta_{h_B}(B)|_{X\backslash{\rm supp}(N)}\geq0;
		\end{align} 
		\item   \begin{align}\label{align_norm_est_L}
			|e|_{h_L}\sim\exp(-\varphi_N)
		\end{align} for  a local generator $e$ of $L$. 
	\end{enumerate} 
    Notice that the metrics $h$ and $h_L$ are $C^\infty$ over $X\backslash D$.
	\begin{lem}\label{lem_Nsp_tame}
	The Hermitian vector bundle	$(K\otimes F|_{X\backslash D}\otimes L|_{X\backslash D},h h_Fh_L)$ is Nakano semi-positive on $X\backslash D$ and tame on $X$.
	\end{lem}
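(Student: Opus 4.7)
The plan is to decompose the claim into its Nakano semi-positivity part on $X\backslash D$ and its tameness part across $D$. For the first part, the curvature of the tensor product metric $hh_Fh_L$ on $K\otimes F|_{X\backslash D}\otimes L|_{X\backslash D}$ splits as
\[
\sqrt{-1}\Theta_h(K)\otimes\mathrm{id}_{F\otimes L}+\mathrm{id}_K\otimes\sqrt{-1}\Theta_{h_F}(F)\otimes\mathrm{id}_L+\mathrm{id}_{K\otimes F}\otimes\sqrt{-1}\Theta_{h_L}(L).
\]
The first summand is Nakano semi-positive by Lemma \ref{lem_semipositive} and the second by hypothesis on $(F,h_F)$; for the third, (\ref{align_L_0}) gives $\sqrt{-1}\Theta_{h_L}(L)\geq 0$ as a $(1,1)$-form on $X\backslash D$, and tensoring a Nakano semi-positive bundle by a semi-positive line bundle preserves the property. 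Summing yields the desired Nakano semi-positivity.

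For tameness, I would work locally around an arbitrary point $x\in X$. If $x\notin D$ the metric is smooth and tameness is automatic, so assume $x\in D$ and choose a polydisc $U\subset X$ centered at $x$ with coordinates $z_1,\dots,z_n$ so that $D\cap U=\{z_1\cdots z_r=0\}$ and $N|_U=\sum_{i=1}^r n_i D_i|_U$. Since $K$ is a locally free subsheaf of $H|_{X\backslash D}$, after shrinking $U$ it is generated on $U\backslash D$ by finitely many sections of $H|_{U\backslash D}$; because $H=\bigcup_{E'} {_{E'}}H$ is an ascending union of locally free $\sO_U$-modules, these generators lie in ${_{E_0}}H|_U$ for some $E_0\in\mathrm{Div}_D(U)$. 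Enlarging $E_0$ componentwise (which only enlarges ${_{E'}}H$ and preserves the inclusion), I fix $E=\sum_{i=1}^r a_i D_i$ with $a_i+n_i>0$ for every $i$. Taking $\pi=\mathrm{id}\colon\widetilde U=U\to U$ and $Q:={_E}H|_U\otimes F|_U\otimes L|_U$ gives a locally free $\sO_U$-module with $K\otimes F\otimes L|_{U\backslash D}\subset Q|_{U\backslash D}$. I equip $Q$ with a smooth Hermitian metric $h_Q:=h_{Q,1}\otimes h_F\otimes h_{Q,L}$ formed from any smooth metrics $h_{Q,1}$ on ${_E}H|_U$ and $h_{Q,L}$ on $L|_U$, and set $h'_Q:=h\otimes h_F\otimes h_L$ on $Q|_{U\backslash D}$; by construction $h'_Q$ restricts to $hh_Fh_L$ on $K\otimes F\otimes L$.

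It remains to verify the tameness inequality $(\sum_{i=1}^r |z_i|^2)^c h_Q\lesssim h'_Q$ on $Q|_{U\backslash D}$ for some $c\in\mathbb{R}$. Choosing a local frame of ${_E}H|_U$ compatible with the parabolic and monodromy weight filtrations, Mochizuki's norm estimate (Theorem \ref{thm_tame_estimate}) shows that $h/h_{Q,1}$ has eigenvalues of order $\prod_i |z_i|^{-2a_i}$ up to polylogarithmic factors in the $\log|z_i|$, while (\ref{align_norm_est_L}) gives $h_L/h_{Q,L}\sim\prod_i |z_i|^{-2n_i}$, and the $F$-factor is bounded. Consequently, the eigenvalues of $h'_Q/h_Q$ are bounded below by $\prod_i |z_i|^{-2(a_i+n_i)}\cdot\prod_i(-\log|z_i|)^{-M}$ for some $M\geq 0$. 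Since $(-\log|z_i|)^{-M}\gtrsim|z_i|^{\epsilon}$ near $z_i=0$ for any $\epsilon>0$ and $a_i+n_i>0$ by construction, this lower bound dominates $(\sum_i|z_i|^2)^c$ for $c$ sufficiently large, giving the tameness inequality. The chief technical subtlety is absorbing the polylogarithmic factors from Mochizuki's estimate, which is achieved precisely by the strict positivity $a_i+n_i>0$ arranged by enlarging $E$.
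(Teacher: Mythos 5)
Your proof is correct and follows essentially the same route as the paper's: Nakano semi-positivity is obtained from Lemma \ref{lem_semipositive}, (\ref{align_L_0}) and the hypothesis on $(F,h_F)$, and tameness by embedding $K\otimes F\otimes L$ into ${_E}H\otimes F\otimes L$ and invoking Mochizuki's norm estimate (Theorem \ref{thm_tame_estimate}) together with (\ref{align_norm_est_L}). The only cosmetic remarks are that the normalization $a_i+n_i>0$ is unnecessary and that the elements of an adapted frame of ${_E}H$ carry individual parabolic weights $\leq a_i$ rather than $a_i$ itself, so the stated eigenvalue lower bound should be taken over those weights; since any polynomial lower bound suffices for the tameness inequality, this does not affect the argument.
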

	\begin{proof}
		The first claim follows from Lemma \ref{lem_semipositive}, (\ref{align_L_0}), and the fact that $(F, h_F)$ is Nakano semi-positive. To prove the second claim, we embed $K \otimes F|_{X\setminus D} \otimes L|_{X\setminus D}$ into ${_E}H|_{X\setminus D} \otimes F|_{X\setminus D} \otimes L|_{X\setminus D}$ for some $E \in {\rm Div}_D(X)$. We still need to demonstrate that 
		$$|z_1 \cdots z_r|^c hh_F h_L \lesssim h_0$$
		for some $c > 0$ and some $C^\infty$ Hermitian metric $h_0$ on ${_E}H\otimes F\otimes L$. Here, $z_1, \ldots, z_n$ are holomorphic local coordinates on $X$ such that $D = \{z_1 \cdots z_r=0\}$.
		This is a direct consequence of Theorem \ref{thm_tame_estimate} and (\ref{align_norm_est_L}).
	\end{proof}
\subsubsection{}
To illustrate the asymptotic behavior of a section of $P_{D-E,(2)}(H)$, we require the following lemma.
\begin{lem}\label{lem_integral}
	Let $\epsilon>0$ and let $$S_{\epsilon}=\left\{(z_1,\dots z_r)\in (\Delta^\ast_{\frac{1}{2}})^r\bigg|\frac{\log|z_1|}{\log|z_2|}>\epsilon,\dots,-\log|z_r|>\epsilon\right\}.$$
	Let ${\rm vol}=dz_1\wedge d\bar{z}_1\wedge\cdots\wedge dz_r\wedge d\bar{z}_r$. Let $a_1,\dots,a_r\in\bR$ and $l_1,\dots,l_r\in\bZ$. 
	Let \begin{align*}
		r_0=\begin{cases}
			\max\{i=1,\dots,r \mid a_1 = \cdots = a_i = 1\}, & \text{if  } a_1 = 1 \\0, & \text{if } a_1 < 1.
		\end{cases}
	\end{align*}
	Then the integration
	$$\int_{S_\epsilon}|z_1|^{-2a_1}\cdots|z_r|^{-2a_r} \left(\frac{\log|z_1|}{\log|z_2|}\right)^{l_1}\cdots\left(-\log|z_r|\right)^{l_r}{\rm vol}$$
	is finite for every $\epsilon>0$ if and only if $a_1,\dots,a_r\leq 1$ and $l_i\leq -i-1$ for any $1\leq i\leq r_0$.
\end{lem}
\begin{proof}
	{\bf Step 1:}
	Let $z_i=\rho_i e^{\sqrt{-1}\theta_i}$, $i=1,\dots,r$. 
	Then
	${\rm vol}=\rho_1\cdots\rho_rd\rho_1\wedge d\theta_1\wedge\cdots\wedge d\rho_r\wedge d\theta_r$ and
	\begin{align}
	&\int_{S_\epsilon}|z_1|^{-2a_1}\cdots|z_r|^{-2a_r} \left(\frac{\log|z_1|}{\log|z_2|}\right)^{l_1}\cdots\left(-\log|z_r|\right)^{l_r}{\rm vol}\\\nonumber
	=&(2\pi)^r\int_{S'_\epsilon}\rho_1^{1-2a_1}\cdots\rho_r^{1-2a_r} \left(\frac{\log\rho_1}{\log\rho_2}\right)^{l_1}\cdots\left(-\log\rho_r\right)^{l_r}d\rho_1\wedge\cdots\wedge d\rho_r
	\end{align}
	where
	$$S'_{\epsilon}=\left\{(\rho_1,\dots \rho_r)\in (0,\frac{1}{2})^r\bigg|\frac{\log\rho_1}{\log\rho_2}>\epsilon,\dots,-\log\rho_r>\epsilon\right\}.$$
	Let 
	$$t_i=\frac{\log\rho_i}{\log\rho_{i+1}},\quad i=1,\dots,r-1\quad\textrm{and}\quad t_r=-\log\rho_r.$$
	Then 
	$$\rho_i=e^{-t_i\cdots t_r},\quad i=1,\dots,r,$$
	and 
	$$0<\rho_i=e^{-t_i\cdots t_r}<\frac{1}{2}\quad \Leftrightarrow\quad\log 2<t_i\cdots t_r<+\infty.$$
	Therefore
	$$S'_\epsilon=\{(t_1,\dots,t_r)\in\bR^r\mid t_i>\epsilon,t_i\cdots t_r>\log 2,\forall i=1,\dots,r\}.$$
The  direct computation 
	\begin{align}
		&d\rho_1\wedge d\rho_2\wedge\cdots\wedge d\rho_r\\\nonumber
		=&de^{-t_1\cdots t_r}\wedge de^{-t_2\cdots t_r}\wedge\cdots\wedge de^{-t_r}\\\nonumber
		=&\frac{\partial}{\partial t_1}(e^{-t_1\cdots t_r})\frac{\partial}{\partial t_2}(e^{-t_2\cdots t_r})\cdots\frac{\partial}{\partial t_r}(e^{-t_r})dt_1\wedge\cdots \wedge dt_r\\\nonumber
		=&(-1)^rt_2t_3^2\cdots t_r^{r-1}e^{-t_1\cdots t_r}e^{-t_2\cdots t_r}\cdots e^{-t_r}dt_1\wedge\cdots\wedge dt_r
	\end{align}
   yields that
	\begin{align}\label{align_integral_Hodge_metric_sector}
		&\int_{S'_\epsilon}\rho_1^{1-2a_1}\cdots\rho_r^{1-2a_r} \left(\frac{\log\rho_1}{\log\rho_2}\right)^{l_1}\cdots\left(-\log\rho_r\right)^{l_r}d\rho_1\wedge\cdots\wedge d\rho_r\\\nonumber
		=&(-1)^r\int_{S'_\epsilon}\prod_{i=1}^re^{(2a_i-2)t_i\cdots t_r}t_i^{l_i+i-1}dt_1\wedge\cdots\wedge dt_r.
	\end{align}
    {\bf Step 2:} In this step, we show that $a_1,\dots,a_r\leq 1$ whenever the integral (\ref{align_integral_Hodge_metric_sector}) is finite for every $\epsilon>0$. We will proceed by contradiction. Specifically,  we assume that there exists some index $m\in \{1,\dots,r\}$ such that $a_m>1$ (and we fix such $m$). Take $0<\epsilon<1$ so that
    \begin{align}
    	c(m,\epsilon):=2a_m-2-\sum_{j=1}^{m-1}|2a_j-2|(2\epsilon)^{m-j}>0.
    \end{align}
    Consider the subset
    $$L_{m,\epsilon}=\{(t_1,\dots,t_r)\in\bR^r\mid t_1,\dots, t_{m-1}\in(\epsilon,2\epsilon);t_{m+1},\dots,t_r\in[1,2];t_m>\epsilon^{1-m}\}\subset S'_\epsilon.$$
    Then the inequality 
    \begin{align*}
    	\sum_{i=1}^r(2a_i-2)t_i\cdots t_r
    	=&\left((2a_m-2)+\sum_{j=1}^{m-1}(2a_j-2)t_j\cdots t_{m-1}\right)t_m\cdots t_r+\sum_{j=m+1}^r(2a_j-2)t_j\cdots t_r\\\nonumber
    	\geq& c(m,\epsilon)t_m-2^{r-m}\sum_{j=m+1}^r|2a_j-2|
    \end{align*}
holds on $L_{m,\epsilon}$.
    As a consequence, one has
    \begin{align}
    	&\int_{S'_\epsilon}\prod_{i=1}^re^{(2a_i-2)t_i\cdots t_r}t_i^{l_i+i-1}dt_1\wedge\cdots\wedge dt_r\\\nonumber
    	\geq&\int_{L_{m,\epsilon}}\prod_{i=1}^re^{(2a_i-2)t_i\cdots t_r}t_i^{l_i+i-1}dt_1\wedge\cdots\wedge dt_r\\\nonumber
    	\gtrsim&\int_{\epsilon^{1-m}}^{\infty}e^{c(m,\epsilon)t_m}t_m^{l_m+m-1}dt_m=+\infty
    \end{align}
    because $c(m,\epsilon)>0$.
    
    {\bf Step 3:} For the converse, we assume that $a_1,\dots,a_r\leq 1$. We aim to demonstrate that the integral (\ref{align_integral_Hodge_metric_sector}) is finite for every $\epsilon > 0$ if $l_i\leq -i-1$ for all $1\leq i\leq r_0$. 
 Given the assumption that $a_1,\dots,a_r \leq 1$ and $a_{r_0+1}<1$, we can infer from
    \begin{align*}
    	t_{r_0+1}^{l_{r_0+1}+r_0}\cdots t_r^{l_r+r-1}\lesssim e^{(1-a_{r_0+1})t_{r_0+1}\cdots t_r}\quad\textrm{where }t_{i}>\epsilon,i=r_0+1\dots,r
    \end{align*}
that 
    \begin{align}\label{align_L2lemma_1}
    	\prod_{i=r_0+1}^re^{(2a_i-2)t_i\cdots t_r}t_i^{l_i+i-1}\lesssim e^{(a_{r_0+1}-1)t_{r_0+1}\cdots t_r}\quad\textrm{where }t_{i}>\epsilon,i=r_0+1\dots,r.
    \end{align}
    As a consequence, we obtain that 
    \begin{align}\label{align_L2Lemma_2}
    	&\int_{S'_\epsilon}\prod_{i=1}^re^{(2a_i-2)t_i\cdots t_r}t_i^{l_i+i-1}dt_1\wedge\cdots\wedge dt_r\\\nonumber
    	\leq&\int_{\epsilon}^{\infty}\cdots \int_{\epsilon}^{\infty}\prod_{i=1}^re^{(2a_i-2)t_i\cdots t_r}t_i^{l_i+i-1}dt_1\cdots dt_r\\\nonumber
        =&\int_{\epsilon}^{\infty}t_1^{l_1}dt_1\cdots \int_{\epsilon}^{\infty}t_{r_0}^{l_{r_0}+r_0-1}dt_{r_0}\int_{\epsilon}^{\infty}\cdots \int_{\epsilon}^{\infty}\prod_{i=r_0+1}^re^{(2a_i-2)t_i\cdots t_r}t_i^{l_i+i-1}dt_{r_0+1}\cdots dt_r\\\nonumber
        \lesssim&\int_{\epsilon}^{\infty}t_1^{l_1}dt_1\cdots \int_{\epsilon}^{\infty}t_{r_0}^{l_{r_0}+r_0-1}dt_{r_0}\int_{\epsilon}^{\infty}\cdots \int_{\epsilon}^{\infty}e^{(a_{r_0+1}-1)t_{r_0+1}\cdots t_r}dt_{r_0+1}\cdots dt_r\quad(\ref{align_L2lemma_1}).\\\nonumber
    \end{align}
    Since $l_i+i-1\leq -2$ for every $1\leq i\leq r_0$, it follows that 
    $$\int_{\epsilon}^{\infty}t_i^{l_i+i-1}dt_i<\infty,\quad 1\leq i\leq r_0.$$
   Then the finiteness of the right hand side of (\ref{align_L2Lemma_2}) is a consequence of Lemma \ref{lem_int_1} below. 
    
    {\bf Step 4:} We assert that if $a_1, \dots, a_r \leq 1$ and $l_m > -m-1$ for some $1\leq m\leq r_0$, then the integral (\ref{align_integral_Hodge_metric_sector}) is infinite for any $0 < \epsilon < 1$.
    
    Let $$L'_{m,\epsilon}=\{(t_1,\dots,t_r)\in\bR^r\mid t_i\in(1,2),\forall i\neq m; t_m>1\}\subset S_{\epsilon}'.$$
    Then we have
    \begin{align}\label{align_L2Lemma_3}
    	&\int_{S'_\epsilon}\prod_{i=1}^re^{(2a_i-2)t_i\cdots t_r}t_i^{l_i+i-1}dt_1\wedge\cdots\wedge dt_r\\\nonumber
    	\geq&\int_{L'_{m,\epsilon}}\prod_{i=1}^re^{(2a_i-2)t_i\cdots t_r}t_i^{l_i+i-1}dt_1\wedge\cdots\wedge dt_r\\\nonumber
    	=&\int_{1}^{2}t_1^{l_1}dt_1\cdots \int_{1}^{\infty}t_{m}^{l_m+m-1}dt_m\cdots \int_{1}^{2}t_{r_0}^{l_{r_0}+r_0-1}dt_{r_0}\int_{1}^{2}\cdots \int_{1}^{2}\prod_{i=r_0+1}^re^{(2a_i-2)t_i\cdots t_r}t_i^{l_i+i-1}dt_{r_0+1}\cdots dt_r\\\nonumber
    	\gtrsim& \int_{1}^{\infty}t_{m}^{l_m+m-1}dt_m,
    \end{align}
which is infinite since $l_m+m-1\geq -1$.
\end{proof}
\begin{lem}\label{lem_int_1}
	Let $\epsilon>0$ and $c>0$ be constants. Then 
	$$\int_{\epsilon}^{\infty}\cdots \int_{\epsilon}^{\infty}e^{-ct_1\cdots t_r}dt_1\cdots dt_r<\infty.$$
\end{lem}
\begin{proof}
	We will show that 
	\begin{align}\label{align_lem_int_proof1}
		\int_{\epsilon}^{\infty}\cdots \int_{\epsilon}^{\infty}e^{-ct_1\cdots t_k}dt_1\cdots dt_k<\infty
	\end{align}
holds by induction on $k$. The initial case $k=1$ follows from 
	$$\int_{\epsilon}^{\infty}e^{-ct_1}dt_1=-\frac{1}{c}e^{-ct_1}\big|^{\infty}_{\epsilon}=\frac{1}{c}e^{-c\epsilon}.$$
	Assuming that (\ref{align_lem_int_proof1}) is valid for \( k=r-1 \) and any \( c>0 \), we proceed to show it holds for \( k=r \):
	\begin{align}
		&\int_{\epsilon}^{\infty}\cdots \int_{\epsilon}^{\infty}e^{-ct_1\cdots t_r}dt_1\cdots dt_r\\\nonumber
		=&\int_{\epsilon}^{\infty}\cdots \int_{\epsilon}^{\infty}\left(\int_{\epsilon}^{\infty}e^{-ct_1\cdots t_r}dt_r\right)dt_1\cdots dt_{r-1}\\\nonumber
		=&\int_{\epsilon}^{\infty}\cdots \int_{\epsilon}^{\infty}\frac{1}{ct_1\cdots t_{r-1}}e^{-c\epsilon t_1\cdots t_{r-1}}dt_1\cdots dt_{r-1}\\\nonumber
		\lesssim&\int_{\epsilon}^{\infty}\cdots \int_{\epsilon}^{\infty}e^{-\frac{1}{2}c\epsilon t_1\cdots t_{r-1}}dt_1\cdots dt_{r-1}\\\nonumber
		<&\infty.
	\end{align}
   Thus, the proof is concluded.
\end{proof}
\begin{prop}\label{prop_P(2)_L2}
	Let $ds^2$ be a Hermitian metric on $X$. Let $E\in{\rm Div}_D(X)$ with $\varphi_E$ as its weight function. Then a holomorphic section $s$ of $H$ lies in $P_{D-E,(2)}(H)$ if and only if $|s|^2_{e^{-\varphi_E}h}{\rm vol}_{ds^2}$ is locally integrable at every point of $D$.
\end{prop}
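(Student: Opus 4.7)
The proof is local at each point of $D$. Fix $x\in D$ with holomorphic local coordinates $(U;z_1,\ldots,z_n)$ such that $D\cap U=\{z_1\cdots z_r=0\}$, and write $E=\sum_{i=1}^r e_iD_i$, so that $D-E=\sum_{i=1}^r(1-e_i)D_i$. Since $\varphi_E\sim\sum_{i=1}^r e_i\log|z_i|$ up to a bounded term, what must be tested for local integrability is $|s|_h^2\prod_i|z_i|^{-2e_i}$ against the Lebesgue measure.

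The plan is to diagonalize the situation by means of a single compatible local holomorphic frame. By the structure theory of Mochizuki (already invoked in the proof of Proposition \ref{prop_P(2)_locally_free}), one may choose a frame $\{v_\alpha\}$ of ${_{D-E}}H|_U$ that is simultaneously adapted to the parabolic filtration $\{{_{E'}}H\}_{E'\leq D-E}$ and to the monodromy weight filtrations $W(N_i)$ at $x$. Each $v_\alpha$ then carries well defined parabolic weights $b_{\alpha,i}\leq 1-e_i$ along $D_i$ and a $W(N_i)$-weight $l_{\alpha,i}\in\bZ$ at $x$. Expanding $s=\sum_\alpha f_\alpha v_\alpha$ with $f_\alpha$ holomorphic reduces the proposition to tracking, for each $\alpha$, the asymptotic size of $|v_\alpha|_h^2\prod_i|z_i|^{-2e_i}$.

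For the sufficiency direction, the frame vectors contributing to a section of $P_{D-E,(2)}(H)$ are of two types. Those coming from ${_{<D-E}}H$ have $b_{\alpha,i_0}<1-e_{i_0}$ for some $i_0$, producing a factor $|z_{i_0}|^{-2(b_{\alpha,i_0}+e_{i_0})}$ with exponent strictly greater than $-2$, hence integrable by Lemma \ref{lem_integral}. Those coming from $\bigcap_i W(N_i)_{-2}$ and not lying in ${_{<D-E}}H$ have $b_{\alpha,i}=1-e_i$ and $l_{\alpha,i}\leq-2$ for every $i$; after partitioning a neighborhood of $x$ into the sectors $-\log|z_1|/\log|z_2|>\epsilon,\ldots,-\log|z_r|>\epsilon$ that appear in Theorem \ref{thm_tame_estimate} and applying that estimate, the integrand is bounded sectorwise by $\prod_i|z_i|^{-2}$ times log factors all of exponent $\leq-2$, which is again integrable by Lemma \ref{lem_integral}. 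Necessity is the contrapositive: if some $v_\alpha$ appearing in $s$ has $b_{\alpha,i}=1-e_i$ for all $i$ but some $l_{\alpha,j}\geq-1$, the Mochizuki estimate forces the log factor on an appropriate sector to be $\geq-1$, producing a divergent integral.

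The chief technical subtlety is that Theorem \ref{thm_tame_estimate} is phrased in terms of the weight filtrations $W^{(k)}$ of the partial sums $N_1+\cdots+N_k$ along nested strata, whereas $P_{D-E,(2)}(H)$ is defined through the individual $W(N_i)$. I expect this to be the main obstacle, and I plan to bridge it by Kashiwara's comparison of multi-variable weight filtrations: belonging to $\bigcap_i W(N_i)_{-2}$ at $x$ forces, sector by sector, that the $W^{(k)}$-weights appearing in the Mochizuki estimate are all $\leq-2$, while failure of the intersection condition detects one sector on which some weight is $\geq-1$. Combined with Lemma \ref{lem_integral}, this yields the stated equivalence.
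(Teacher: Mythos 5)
Your proposal follows essentially the same route as the paper's proof: reduce to a local polydisc, work with a Mochizuki frame compatible with both the parabolic filtration and the weight filtrations, apply the norm estimate of Theorem \ref{thm_tame_estimate} sector by sector, and conclude with the integrability criterion of Lemma \ref{lem_integral}; the $W^{(k)}$-versus-$W(N_i)$ discrepancy you flag is indeed the crux, and the paper's printed argument actually passes over it silently, so your explicit plan to bridge it is if anything more careful. One small correction: by the paper's componentwise convention $E'<E$, a frame vector lies in ${_{<D-E}}H$ only if its parabolic weight is strictly below the threshold along \emph{every} component $D_i$, not just some $i_0$ as you write --- and it is precisely this stronger property that makes the corresponding contribution integrable, since integrability in a single variable would not control the full product integral.
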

	\begin{proof}
		Since the problem is local, let's assume that $X=\Delta^r\times\Delta^{n-r}$ for some $0\leq r\leq n$. We have $z_1,\dots, z_n$ as its standard holomorphic coordinates, and $D=\cup_{i=1}^rD_i$, where $D_i:=\{z_i=0\}\subset X$ for $i=1,\dots,r$. Let $E=\sum_{i=1}^rk_iD_i$ with $k_1,\dots,k_r\in\bR$. We also have $E_0=\sum_{i=1}^r a_iD_i\in {\rm Div}_{D}(X)$ and $v\in {_{E_0}}H_h$ as a holomorphic local section at ${\bf 0}=(0,\dots,0)$. Suppose ${\rm Res}_{D_i}(\theta)$ has the nilpotent part $N_i: {_{E_0}}H_h|_{D_i}\to{_{E_0}}H_h|_{D_i}$. Let ${W^{(k)}}$ $(1\leq k\leq r)$ represent the monodromy weight filtration on ${_{E_0}}H_h|_{D_1\cap\cdots\cap D_k}$ associated with $N_1+\cdots+N_k$.
		
		Assume the following conditions are met:
		\begin{itemize}
			\item $v\notin {_{E'}}H_h$ for every $E'\leq E_0$, $E'\neq E_0$;
			\item $v|_{\bf 0}\in \bigcap_{k=1}^r W^{(k)}_{l_k}|_{\bf 0}$ for some $l_1,\dots,l_r\in\bZ$ and $0\neq [v]\in {\rm Gr}_{l_r}^{W^{(r)}}\cdots{\rm Gr}_{l_1}^{W^{(1)}}{\rm Gr}_{E_0}H_h|_{\bf 0}$.
		\end{itemize}
		By Theorem \ref{thm_tame_estimate}, we can estimate
		\begin{align*}
			|v|^2_{e^{-\varphi_E}h}\sim|z_1|^{-2k_1-2a_1}\cdots|z_r|^{-2k_r-2a_r} \left(\frac{\log|z_1|}{\log|z_2|}\right)^{l_1}\cdots\left(-\log|z_r|\right)^{l_r}
		\end{align*}
		over any region of the form
		$$S_{(z_1,\cdots,z_r),\epsilon}=\left\{(z_1,\dots z_n)\in (\Delta^\ast)^r\times \Delta^{n-r}\bigg|\frac{\log|z_1|}{\log|z_2|}>\epsilon,\dots,-\log|z_r|>\epsilon,(z_{r+1},\dots,z_{n})\in M\right\}$$
		for any $\epsilon>0$ and an arbitrary compact subset $M\subset \Delta^{n-r}$. Let \begin{align*}
				r_0=\begin{cases}
					\max\{i=1,\dots,r \mid k_1+a_1 = \cdots = k_i+a_i = 1\}, & \text{if  } k_1+a_1 = 1 \\0, & \text{if } k_1+a_1 < 1.
				\end{cases}
			\end{align*}
		According to Lemma \ref{lem_integral}, the integral
		$\int_{S_{(z_1,\cdots,z_r),\epsilon}}|v|^2_{e^{-\varphi_E}h}{\rm vol}_{ds^2}$ is finite near ${\bf 0}$ for every $\epsilon>0$ if and only if $$k_1+a_1,\dots,k_r+a_r\leq 1$$ and $l_i\leq -i-1$ for every $1\leq i\leq r_0$. Since this statement holds on every sector $S_{(z_{\sigma(1)},\cdots,z_{\sigma(r)}),\epsilon}$ ($\sigma\in S_r$), it follows that $|v|^2_{e^{-\varphi_E}h}$ is integral near $\bm{0}$ if and only if $v\in P{_{D-E,(2)}}(H)$. This proves the proposition.
	\end{proof}
    We denote by $j:X\backslash D\to X$ the immersion.
    \begin{cor}\label{cor_L2_interpretation}
    	$\omega_X\otimes (P{_{D-N,(2)}}(H)\cap j_\ast K)\otimes F\otimes L\simeq S_X(K\otimes F|_{X\backslash D}\otimes L|_{X\backslash D},hh_Fh_L)$. 
    \end{cor}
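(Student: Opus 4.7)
The identification is a local matter, so I would work on a small open $U\subset X$ on which $\omega_X$, $F$, and $L$ are simultaneously trivialized: pick a nowhere vanishing holomorphic $n$-form $\omega$ generating $\omega_X|_U$, a holomorphic frame $\{e_1,\ldots,e_m\}$ of $F|_U$, and a local generator $e$ of $L|_U$. By (\ref{align_norm_est_L}) we may further arrange $|e|^2_{h_L}\sim\exp(-2\varphi_N)$ on $U$. A section of $\omega_X\otimes(P_{D-N,(2)}(H)\cap j_\ast K)\otimes F\otimes L$ on $U$ then takes the form $\omega\otimes\bigl(\sum_{i} s_i\otimes e_i\bigr)\otimes e$, with each $s_i$ a section of $P_{D-N,(2)}(H)\cap j_\ast K$ on $U$; equivalently, $s_i|_{U\setminus D}$ is a holomorphic section of $K$ and $s_i$ itself lies in $P_{D-N,(2)}(H)(U)$ as a section of $H$. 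On the other side, a section of $S_X(K\otimes F|_{X\setminus D}\otimes L|_{X\setminus D},\,hh_Fh_L)$ on $U$ is by definition a holomorphic $K\otimes F\otimes L$-valued $(n,0)$-form $\alpha$ on $U\setminus D$ satisfying the local $L^2$-integrability; in the chosen trivializations, $\alpha$ has exactly the same shape $\omega\otimes\bigl(\sum_{i} s_i\otimes e_i\bigr)\otimes e$, but now with $s_i$ only required a priori to be holomorphic on $U\setminus D$.

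The key step is therefore to match the defining conditions on the coefficients $s_i$. The smoothness of $\omega$, of the base metric $ds^2$, and of the Hermitian metric $h_F$ ensures that $|\omega|_{ds^2}$ and $h_F(e_i,e_j)$ are bounded above and below (positive definite) on any relatively compact $V\subset U$; together with $|e|^2_{h_L}\sim\exp(-2\varphi_N)$, a direct pointwise computation yields
\begin{align*}
\int_{V\setminus D}\alpha\wedge_{hh_Fh_L}\overline{\alpha}\;\sim\;\sum_{i,j}\int_{V\setminus D} h_F(e_i,e_j)\,h(s_i,s_j)\,\exp(-2\varphi_N)\,{\rm vol}_{ds^2},
\end{align*}
which, by positive-definiteness of $h_F$, is finite if and only if every $\int_{V\setminus D}|s_i|^2_h\exp(-2\varphi_N)\,{\rm vol}_{ds^2}$ is finite. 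Applying Proposition \ref{prop_P(2)_L2}, the finiteness of the latter around every point of $U$ is precisely the statement $s_i\in P_{D-N,(2)}(H)(U)$. Combined with the tautological identification $s_i|_{U\setminus D}\in K(U\setminus D)=(j_\ast K)(U)$, this produces a bijection of local sections, and the construction is visibly functorial in $U$, hence glues to a global isomorphism of sheaves.

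The main technical point is simply to verify that the bounded, smooth contributions coming from $\omega$, $ds^2$, and $h_F$ drop out of the $L^2$-estimate, so that the integrability condition for $\alpha$ collapses to the weighted parabolic $L^2$-condition for the coefficients $s_i$. Once this reduction is made, Proposition \ref{prop_P(2)_L2} does all the analytic work, so I do not anticipate any serious obstacle: the nontrivial content of the corollary (the tame asymptotics of $h$ along $D$) is already absorbed into Proposition \ref{prop_P(2)_L2}, and the contribution of the singular metric on $L$ is absorbed into the weight $\exp(-2\varphi_N)$ via (\ref{align_norm_est_L}).
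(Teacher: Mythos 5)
Your argument is correct and follows essentially the same route as the paper: the analytic content is delegated to Proposition \ref{prop_P(2)_L2}, and the remaining step is the observation that the smooth, locally bounded factors coming from $\omega_X$, $ds^2$ and $h_F$ do not affect local integrability while the singular factor $h_L$ contributes exactly the weight $e^{-2\varphi_N}$. The paper packages this last step as a citation of Lemma \ref{lem_Kernel} rather than the explicit local-frame computation you carry out, but the two are the same argument.
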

    \begin{proof}
    	According to Proposition \ref{prop_P(2)_L2}, we have a natural isomorphism:
    	$$\omega_X\otimes (P_{D-N,(2)}(H)\cap j_\ast K) \simeq S_X(K,e^{-\varphi_N}h).$$
    	Using Lemma \ref{lem_Kernel}, we obtain:
    	$$\omega_X\otimes (P_{D-N,(2)}(H)\cap j_\ast K)\otimes F\otimes L \simeq S_X(K,e^{-\varphi_N}h)\otimes F\otimes L \simeq S_X(K\otimes F|_{X\backslash D}\otimes L|_{X\backslash D},hh_Fh_L).$$
    \end{proof}
\begin{cor}\label{cor_locally_free}
The sheaf	$P{_{E,(2)}}(H)\cap j_\ast K$ is locally free for every $E\in{\rm Div_D(X)}$.
\end{cor}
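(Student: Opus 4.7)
The plan is to exhibit $P_{E,(2)}(H)\cap j_\ast K$ as a direct summand of the locally free sheaf $P_{E,(2)}(H)$ (Proposition \ref{prop_P(2)_locally_free}). Since direct summands of locally free coherent sheaves on the smooth variety $X$ are locally free (their stalks are finitely generated projective, hence free, over the regular local rings $\sO_{X,x}$), this finishes the argument.

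I would first observe that under the hypotheses on $K$, the $h$-orthogonal complement $K^\perp\subset H|_{X\backslash D}$ is also a $\dbar$-holomorphic subbundle. Indeed, the proof of Lemma \ref{lem_semipositive} already establishes $\partial(K)\subset K\otimes \sA^{1,0}_{X\backslash D}$, which is to say that the $(1,0)$-type second fundamental form of $K\subset H|_{X\backslash D}$ with respect to $h$ vanishes. Its adjoint, the $(0,1)$-type second fundamental form of $K^\perp$, therefore vanishes as well, so that $K^\perp$ is $\dbar$-holomorphic. This produces a holomorphic orthogonal decomposition $H|_{X\backslash D}=K\oplus K^\perp$ with respect to which the orthogonal projections $\pi_K$ and $\pi_{K^\perp}$ are holomorphic $\sO_{X\backslash D}$-linear maps.

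I would then verify that $\pi_K$ and $\pi_{K^\perp}$ preserve $P_{E,(2)}(H)$. Setting $N:=D-E$ so that $P_{E,(2)}(H)=P_{D-N,(2)}(H)$, Proposition \ref{prop_P(2)_L2} characterizes $P_{E,(2)}(H)$ by local integrability of $|\cdot|_h^2\, e^{-2\varphi_N}\,{\rm vol}_{ds^2}$ near every point of $D$. For any local section $s\in P_{E,(2)}(H)$, the pointwise inequality $|\pi_K s|_h\leq |s|_h$ combined with Definition \ref{defn_prolongation} shows that $\pi_K s$ extends across $D$ as a section of ${_E}H_h={_E}H$, and the same bound yields local integrability of $|\pi_K s|^2_h\, e^{-2\varphi_N}$, so that $\pi_K s\in P_{E,(2)}(H)$. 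The analogous statement holds for $\pi_{K^\perp}s$. Since $s=\pi_K s+\pi_{K^\perp}s$ on the dense open $X\backslash D$ and $P_{E,(2)}(H)$ is torsion-free, this identity extends to $X$, giving the direct sum decomposition
\begin{align*}
P_{E,(2)}(H)=(P_{E,(2)}(H)\cap j_\ast K)\oplus (P_{E,(2)}(H)\cap j_\ast K^\perp).
\end{align*}
The main technical point is ensuring that the extension of $\pi_K s$ across $D$ preserves the parabolic weight $E$; this is immediate from the norm bound $|\pi_K s|_h\leq|s|_h$ and Definition \ref{defn_prolongation}.
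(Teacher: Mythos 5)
Your proposal is correct and follows essentially the same route as the paper: both arguments use the vanishing of the second fundamental form to obtain the holomorphic orthogonal decomposition $H|_{X\backslash D}=K\oplus K^{\perp}$, then invoke the pointwise orthogonality together with Proposition \ref{prop_P(2)_L2} to split $P_{E,(2)}(H)$ as $(P_{E,(2)}(H)\cap j_\ast K)\oplus(P_{E,(2)}(H)\cap j_\ast K^{\perp})$, and conclude via Proposition \ref{prop_P(2)_locally_free}. Your extra check via Definition \ref{defn_prolongation} that $\pi_K s$ remains a section of ${_E}H$ is a minor point the paper leaves implicit, but the argument is the same.
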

\begin{proof}
	Since $\nabla^{0,1}(K) = 0$, it follows that
	$$\partial(K) = (\nabla-\theta)(K) \subset K\otimes\sA^{1,0}_{X\backslash D}.$$
	As a result, the second fundamental form of $K\subset H|_{X\backslash D}$ vanishes, indicating that the orthogonal complement $K^\bot$ is holomorphic. Furthermore, $H|_{X\backslash D} = K\oplus K^{\bot}$. Let $s_1$ be a section of $j_\ast K$ and $s_2$ a section of $j_\ast K^{\bot}$. Let $\varphi_{D-E}$ denote a weight function associated with $D-E$. Since $s_1$ and $s_2$ are pointwise orthogonal to each other, $|s_1+s_2|_{e^{-\varphi_{D-E}}h}$ is locally $L^2$ if and only if both $|s_1|_{e^{-\varphi_{D-E}}h}$ and $|s_2|_{e^{-\varphi_{D-E}}h}$ are locally $L^2$. According to Proposition \ref{prop_P(2)_L2}, this implies that 
	$$P{_{E,(2)}}(H) = (P{_{E,(2)}}(H)\cap j_\ast K)\oplus (P{_{E,(2)}}(H)\cap j_\ast K^\bot).$$
	Since the sheaf $P{_{E,(2)}}(H)$ is locally free (Proposition \ref{prop_P(2)_locally_free}), it follows that the sheaf $P{_{E,(2)}}(H)\cap j_\ast K$ is also locally free.
\end{proof}
Now, we are ready to prove the main theorem of this article.
\begin{thm}
	Let $f:X\to Y$ be a proper surjective holomorphic morphism  to a complex space. Then 
	$\omega_X\otimes (P{_{D-N,(2)}}(H)\cap j_\ast K)\otimes F\otimes L$ satisfies Koll\'ar's package with respect to $f$.
\end{thm}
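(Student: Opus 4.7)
The plan is to reduce the statement to the meta Koll\'ar's package for $L^2$-extension sheaves (Theorem \ref{thm_abstract_Kollar_package}), which is the whole point of the preparatory work in \S 3.1--3.3. All the nontrivial ingredients, both the $L^2$-interpretation of the algebraic sheaf and the verification of Nakano semi-positivity and tameness, have already been isolated in the earlier lemmas. What remains is to combine them.

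First I would invoke Corollary \ref{cor_L2_interpretation} to rewrite
\[
\omega_X\otimes (P_{D-N,(2)}(H)\cap j_\ast K)\otimes F\otimes L \simeq S_X\bigl(K\otimes F|_{X\backslash D}\otimes L|_{X\backslash D},\,hh_Fh_L\bigr).
\]
This replaces the algebraically defined coherent sheaf on the left by an $L^2$-extension sheaf on the right, at the cost of having to handle a Hermitian vector bundle defined only on the Zariski-open set $X\backslash D$.

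Next I would check the hypotheses of Theorem \ref{thm_abstract_Kollar_package} for the data $f\colon X\to Y$, $X^o := X\backslash D \subset X_{\rm reg}=X$, and the Hermitian bundle $(K\otimes F|_{X\backslash D}\otimes L|_{X\backslash D},\,hh_Fh_L)$. Properness is given. Since $X$ is projective (or K\"ahler, see Remark \ref{rmk_Kahler_setting}), the morphism $f$ is locally K\"ahler. Nakano semi-positivity of the curvature on $X\backslash D$ and tameness on $X$ are precisely the two conclusions of Lemma \ref{lem_Nsp_tame}, whose proofs in turn rely on the self-dual equation together with the holomorphicity and weak transversality of $K$ (for semi-positivity) and on Mochizuki's asymptotic estimate (Theorem \ref{thm_tame_estimate}) together with the norm estimate (\ref{align_norm_est_L}) for $h_L$ (for tameness).

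With these inputs in place, Theorem \ref{thm_abstract_Kollar_package} immediately yields Koll\'ar's package for $S_X(K\otimes F|_{X\backslash D}\otimes L|_{X\backslash D},\,hh_Fh_L)$ with respect to $f$, and transporting along the isomorphism of Corollary \ref{cor_L2_interpretation} gives the theorem. There is essentially no new obstacle at this final stage; the genuinely hard work, namely identifying the algebraic sheaf $\omega_X\otimes(P_{D-N,(2)}(H)\cap j_\ast K)\otimes F\otimes L$ as an $L^2$-sheaf and checking tameness against the harmonic metric $h$, was already carried out in Proposition \ref{prop_P(2)_L2}, Corollary \ref{cor_L2_interpretation}, and Lemma \ref{lem_Nsp_tame}.
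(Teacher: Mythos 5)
Your proposal is correct and follows exactly the paper's own argument: apply Corollary \ref{cor_L2_interpretation} to identify the sheaf with $S_X(K\otimes F|_{X\backslash D}\otimes L|_{X\backslash D},hh_Fh_L)$, verify Nakano semi-positivity and tameness via Lemma \ref{lem_Nsp_tame}, and conclude by Theorem \ref{thm_abstract_Kollar_package}. No differences worth noting.
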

\begin{proof}
Based on Theorem \ref{thm_abstract_Kollar_package} and Corollary \ref{cor_L2_interpretation}, it is enough to demonstrate that $(K\otimes F|_{X\backslash D}\otimes L|_{X\backslash D},hh_Fh_L)$ is Nakano semipositive and tame on $X$. This can be achieved by using Lemma \ref{lem_Nsp_tame}.
\end{proof}
	\section{Examples and applications}
	\subsection{Koll\'ar's package for Koll\'ar-Saito's $S$-sheaf twisted by an $\bR$-divisor}
	\subsubsection{Complex variation of Hodge structure}\label{section_CVHS}
	\begin{defn}{\cite[\S 8]{Simpson1988}}\label{defn_CVHS}	
		Let $X^o$ be a smooth variety. 
		A \emph{polarized complex variation of Hodge structure} on $X^o$ of weight $k$ is a flat holomorphic connection $(\cV,\nabla)$ on $X^o$ together with a decomposition $\cV\otimes_{\sO_{X^o}}\sA^0_{X^o}=\bigoplus_{p+q=k}\cV^{p,q}$ of $C^\infty$ bundles and a flat Hermitian form $Q$ on $\cV$ such that
		\begin{enumerate}
			\item The Hermitian form $h_Q$ which equals $(-1)^{p}Q$ on $\cV^{p,q}$ is a Hermitian metric on the $C^\infty$ complex vector bundle $\cV\otimes_{\sO_{X^o}}\sA^0_{X^o}$.
			\item The decomposition $\cV\otimes_{\sO_{X^o}}\sA^0_{X^o}=\bigoplus_{p+q=k}\cV^{p,q}$ is orthogonal with respect to $h_Q$.
			\item The Griffiths transversality condition 
			\begin{align}\label{align_Griffiths transversality}
				\nabla(\cV^{p,q})\subset \sA^{0,1}(\cV^{p+1,q-1})\oplus \sA^{1,0}(\cV^{p,q})\oplus\sA^{0,1}(\cV^{p,q})\oplus \sA^{1,0}(\cV^{p-1,q+1})
			\end{align}
			holds for every $p$ and $q$. Here $\sA^{i,j}(\cV^{p,q})$ denotes the sheaf of smooth $(i,j)$-forms with values in $\cV^{p,q}$.
		\end{enumerate}
	\end{defn}
Let $X$ be a smooth projective variety and $\cup_{i=1}^l D_i=D:=X\backslash X^o\subset X$ be a simple normal crossing divisor, where $D_1,\dots, D_l$ are irreducible components. Let $\bV=(\cV,\nabla,\{\cV^{p,q}\},Q)$ be a polarized complex variation of Hodge structure on $X^o:=X\backslash D$. 
Take the decomposition
$$\nabla=\overline{\theta}+\partial+\dbar+\theta$$
according to (\ref{align_Griffiths transversality}). Let $H={\rm Ker}(\dbar:\cV\otimes_{\sO_{X^o}}\sA^0_{X^o}\to \cV\otimes_{\sO_{X^o}}\sA^{0,1}_{X^o})$.
Then triple $(H,\theta,h_Q)$ is a harmonic bundle associated with $(\cV,\nabla,h_Q)$ through Simpson's correspondence \cite[\S 8]{Simpson1988}. Furthermore, there is an orthogonal decomposition of holomorphic subbundles:
$$H=\oplus_{p+q=k}H^{p,q}$$
where $H^{p,q}=H\cap\cV^{p,q}$. In addition, we have:
\begin{align*}
	\theta(H^{p,q})\subset H^{p-1,q+1}\otimes \Omega_{X^o}.
\end{align*}
	\begin{prop}\emph{(\cite[Proposition 5.4]{SC2021_kollar})}\label{prop_VHS_tame}
	The harmonic bundle	$(H,\theta,h_Q)$ is tame on $X$.
	\end{prop}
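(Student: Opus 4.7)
The plan is to verify Definition \ref{defn_tame_Hermitian_bundle} directly for $(H,h_Q)$ by invoking Mochizuki's norm estimate (Theorem \ref{thm_tame_estimate}). First, one checks that $(H,\theta,h_Q)$ is a tame harmonic bundle on $(X,D)$ in the sense of Definition \ref{defn_tame_harmonic}: the Deligne canonical extension of $(\cV,\nabla)$ together with Schmid's extension of the Hodge filtration $\{F^p\cV\}$ across $D$ supplies a logarithmic extension of the Higgs bundle $(H,\theta)$. Hence Theorem \ref{thm_parabolic} equips $H$ with the parabolic Higgs structure ${_\ast}H_{h_Q}$, and Theorem \ref{thm_tame_estimate} controls the asymptotics of $h_Q$ along $D$.

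Fix a point $x\in X$. The case $x\notin D$ is trivial, so assume $x\in D$ and choose a polydisc neighbourhood $(U;z_1,\dots,z_n)$ of $x$ with $D\cap U=\{z_1\cdots z_r=0\}$. Since $X$ is smooth and $D$ is simple normal crossing, take $\pi=\mathrm{id}:U\to U$, and set $Q:={_{E_0}}H_{h_Q}|_U$ for some fixed $E_0\in\mathrm{Div}_D(X)$. By Theorem \ref{thm_parabolic}, $Q$ is a holomorphic vector bundle on $U$ with $Q|_{U\cap X^o}=H|_{U\cap X^o}$, and the choice $h'_Q:=h_Q$ tautologically verifies condition (1) and the first half of condition (2) in Definition \ref{defn_tame_Hermitian_bundle}.

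The core step is the tameness inequality (\ref{align_tame}). The ideal sheaf of $D\cap U$ is principal, generated by $f=z_1\cdots z_r$, so (\ref{align_tame}) reduces to
$$|z_1\cdots z_r|^{2c}h_{Q_0}\lesssim h_Q$$
on $U\cap X^o$, for some smooth Hermitian metric $h_{Q_0}$ on $Q$ and some $c\in\bR$. Following \cite{Mochizuki20072}, choose a local holomorphic frame $e_1,\dots,e_N$ of $Q$ compatible with the parabolic filtration $\{{_{E'}}H_{h_Q}\}_{E'\leq E_0}$ and with the monodromy weight filtrations of the residues of $\theta$ along each $D_i$. Theorem \ref{thm_tame_estimate} applied to each $e_j$ yields an asymptotic of the form
$$|e_j|^2_{h_Q}\sim\prod_{i=1}^r|z_i|^{-2a_{i,j}}\cdot L_j,$$
where the parabolic weights $(a_{i,j})$ form a finite set bounded by $E_0$ and $L_j$ is a product of logarithmic factors. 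Since any such logarithmic factor is dominated by $|z_i|^{-\epsilon}$ for arbitrarily small $\epsilon>0$, any $c$ strictly greater than $\max_{i,j}(-a_{i,j})$ gives $|e_j|^2_{h_Q}\gtrsim|z_1\cdots z_r|^{2c}$ for every $j$. The main remaining obstacle is to lift these bounds on individual frame elements to the full matrix inequality; this is handled by the near-diagonal asymptotic behaviour of $h_Q$ in a compatible frame, a refinement of Theorem \ref{thm_tame_estimate} also established in \cite{Mochizuki20072}. This completes the verification that $(H,h_Q)$ is tame on $X$.
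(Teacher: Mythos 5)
The paper offers no proof of this proposition at all: it is imported verbatim as \cite[Proposition 5.4]{SC2021_kollar}, so there is no internal argument to compare yours against, and your write-up should be judged as a reconstruction of that reference. As such it is essentially correct, and it sensibly covers both readings of ``tame on $X$'' (the harmonic-bundle tameness of Definition \ref{defn_tame_harmonic}, which is what Theorem \ref{thm_parabolic} needs immediately after the proposition, and the Hermitian-bundle tameness of Definition \ref{defn_tame_Hermitian_bundle}, which is what the $L^2$ machinery needs). Two remarks. First, for the initial step there is a shorter route than Deligne plus Schmid: the transversality relation $\theta(H^{p,q})\subset H^{p-1,q+1}\otimes\Omega_{X^o}$ forces $\theta$ to be nilpotent, so the coefficients of its characteristic polynomial vanish identically and extend trivially across $D$; this is Simpson--Mochizuki's criterion for tameness of a harmonic bundle and avoids invoking the extension of the Hodge filtration for a complex variation of Hodge structure with possibly non-quasi-unipotent monodromy, which is itself a substantial theorem of comparable depth to the statement being proved. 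Second, you correctly identify the only genuine gap in the naive argument, namely that lower bounds on $|e_j|^2_{h_Q}$ for the individual elements of a compatible frame do not yield the matrix inequality $|z_1\cdots z_r|^{2c}h_0\lesssim h_Q$ because of possible cancellation among frame elements; the appeal to Mochizuki's mutual boundedness of $h_Q$ with the diagonal model metric in an adapted frame is exactly what closes this, and that estimate is indeed available in \cite{Mochizuki20072}, so the proof is complete modulo that citation. A small point of hygiene: the symbol $Q$ does double duty in your argument, as the polarization of $\bV$ and as the extending bundle of Definition \ref{defn_tame_Hermitian_bundle}; rename one of them.
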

Combining the above proposition with Theorem \ref{thm_parabolic} and Theorem \ref{thm_Simpson-Mochizuki_harmonic_metric}, we can conclude that $\left(H_{\bV}:=\bigcup_{E\in{\rm Div}_D(X)}{_E}H_{h_Q},\{{_E}H_{h_Q}\}_{E\in{\rm Div}_D(X)},\theta\right)$ is a $\mu_A$-polystable locally abelian parabolic Higgs bundle on $(X, D)$. It is called the locally abelian parabolic Higgs bundle associated with $\bV$.

Let $S(\bV):={\rm ker}(\nabla^{0,1})\cap\cV^{p_{\rm max},k-p_{\rm max}}$ where $p_{\rm max}=\max\{p|\cV^{p,k-p}\neq0\}$ and 	$\nabla=\nabla^{1,0}+\nabla^{0,1}$ is the bi-degree decomposition. The following lemma can be easily derived from Definition \ref{defn_CVHS}.
	\begin{lem}\label{lem_SV_conditions}
		$\nabla^{0,1}(S(\bV))=0$, $\dbar(S(\bV))=0$ and $(\nabla-\theta)(S(\bV))\subset S(\bV)\otimes\sA^{1,0}_{X\backslash D}$.
	\end{lem}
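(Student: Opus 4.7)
The plan is to unfold $\nabla$ using the Griffiths transversality decomposition $\nabla = \overline{\theta} + \partial + \dbar + \theta$ supplied by (\ref{align_Griffiths transversality}) and verify each of the three assertions in turn. The single nontrivial ingredient is the vanishing $\cV^{p_{\rm max}+1,\,k-p_{\rm max}-1} = 0$, built into the definition of $p_{\rm max}$, which annihilates the $\overline{\theta}$-contribution on any section of $\cV^{p_{\rm max},\,k-p_{\rm max}}$.

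The first assertion, $\nabla^{0,1}(S(\bV)) = 0$, is tautological from the definition $S(\bV) = \ker(\nabla^{0,1}) \cap \cV^{p_{\rm max},\,k-p_{\rm max}}$. For the second assertion, I would take a local section $s$ of $S(\bV)$ and split $\nabla^{0,1}(s) = \dbar(s) + \overline{\theta}(s)$ according to the $(p,q)$-grading. Since $\overline{\theta}(s) \in \sA^{0,1}(\cV^{p_{\rm max}+1,\,k-p_{\rm max}-1}) = 0$ by Griffiths transversality and the maximality of $p_{\rm max}$, the vanishing $\nabla^{0,1}(s)=0$ reduces to $\dbar(s) = 0$.

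For the third assertion, write $(\nabla - \theta)(s) = \overline{\theta}(s) + \partial(s) + \dbar(s)$; the first and third summands vanish by the previous step, leaving $(\nabla - \theta)(s) = \partial(s)$. Griffiths transversality tells us that $\partial$ preserves the Hodge bigrading, so $\partial(s) \in \sA^{1,0}(\cV^{p_{\rm max},\,k-p_{\rm max}})$, i.e.\ $\partial(s)$ lies in $S(\bV) \otimes \Omega_{X\backslash D}$ once one identifies the $C^\infty$ bundle underlying $S(\bV)$ with $\cV^{p_{\rm max},\,k-p_{\rm max}}$. No genuine obstacle appears; the whole argument is a bookkeeping of Griffiths transversality together with the vanishing of the top Hodge piece one step above, which is exactly why the paper describes the lemma as an easy consequence of Definition \ref{defn_CVHS}.
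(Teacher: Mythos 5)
Your treatment of the first two assertions is correct and is exactly the intended unfolding of Definition \ref{defn_CVHS}: on $\cV^{p_{\rm max},k-p_{\rm max}}$ the component $\overline{\theta}$ of $\nabla^{0,1}=\dbar+\overline{\theta}$ lands in $\sA^{0,1}(\cV^{p_{\rm max}+1,k-p_{\rm max}-1})=0$, so $\nabla^{0,1}=\dbar$ there and the two kernels coincide.

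The third assertion is where your argument has a genuine gap, in the very last step. From $\overline{\theta}(s)=\dbar(s)=0$ you correctly get $(\nabla-\theta)(s)=\partial(s)\in\sA^{1,0}(\cV^{p_{\rm max},k-p_{\rm max}})$, but that sheaf is $S(\bV)\otimes\sA^{1,0}_{X\backslash D}$, not $S(\bV)\otimes\Omega_{X\backslash D}$: identifying the underlying $C^\infty$ bundle of $S(\bV)$ with $\cV^{p_{\rm max},k-p_{\rm max}}$ does not make a smooth $(1,0)$-form holomorphic. In fact $\partial(s)$ is \emph{not} $\dbar$-closed in general: flatness of $\nabla$ (equivalently the self-dual equation \eqref{align_self-dual equation}) gives $\dbar\partial(s)=-(\overline{\theta}\wedge\theta)(s)$, which by the computation in Lemma \ref{lem_semipositive} is (up to a factor of $\sqrt{-1}$) the curvature of $S(\bV)$ and is nonzero for any non-isotrivial $\bV$ (e.g.\ the Hodge line bundle of a modular family of elliptic curves has strictly positive curvature). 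So the inclusion into $S(\bV)\otimes\Omega_{X\backslash D}$ as literally printed cannot be established by this route, and indeed appears to be false; the correct conclusion of your computation is the smooth statement $(\nabla-\theta)(S(\bV))\subset S(\bV)\otimes\sA^{1,0}_{X\backslash D}$. This weaker statement is precisely the weak transversality hypothesis of Theorem \ref{thm_main}, and is all that the paper ever uses, so you should either prove the lemma with $\sA^{1,0}_{X\backslash D}$ in place of $\Omega_{X\backslash D}$ or explicitly flag the discrepancy rather than paper over it with the bundle identification.
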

	\subsubsection{Local weight of $S(\bV)$}
	Let $x \in D$ and $(U; z_1, \dots, z_n)$ denote holomorphic local coordinates on an open neighborhood $U$ of $x$ in $X$, such that $D = \{z_1 \cdots z_r = 0\}$. Define $D_i = \{z_i = 0\}$ for $i = 1, \dots, r$. Let $N_i$ represent the nilpotent part of the residue map $\mathrm{Res}_{D_i}(\theta)$ of the Higgs field along $D_i$. Given a subset $I \subset \{1, \dots, r\}$, let $E \in \mathrm{Div}_D(X)$ and consider $\{{_E}W(\sum_{i \in I} N_i)_m\}_{m \in \mathbb{Z}}$, the monodromy weight filtration on ${_E}H_{h_Q}|_U$ at $x$ with respect to $\sum_{i \in I} N_i$. Let $j: U \setminus D \to U$ denote the immersion. In the subsequent section, we will utilize the following semi-purity results of $S(\bV)$ to characterize $P_{E,(2)}(H_\bV) \cap j_* S(\bV)$.
	\begin{lem}\label{lem_W_F}
		Suppose $n = r = 1$, that is, $U = \Delta$ is the unit disc and $D = \{0\}$. Then we have $$\big(j_\ast S(\bV)\cap{_{E}}H_{h_Q}\big)\cap {_E}W(N_1)_{-1}|_{0}\subset {_{<E}}H_{h_Q}|_{0}.$$
	\end{lem}
    \begin{proof}
    	Let us consider the quotient ${\rm Gr}_E H_{h_Q} := {_{E}}H_{h_Q}/{_{<E}}H_{h_Q}$, equipped with the monodromy weight filtration $W(N_1) := {_E}W(N_1)\!\!\mod {_{<E}}H_{h_Q}$ and the Hodge filtration $F^\bullet$ defined by  
    	$$
    	F^p := j_\ast \cF^p \cap {_{E}}H_{h_Q}/{_{<E}}H_{h_Q},
    	$$  
    	where $\cF^\bullet$ denotes the holomorphic Hodge filtration of $\bV$. According to \cite[\S 6.8]{Schnell2025} (see also \cite[6.16]{Schmid1973}),  
    	$$
    	({\rm Gr}_E H_{h_Q}, W(N_1), F^\bullet)
    	$$  
    	forms a mixed Hodge structure. To prove the lemma, it suffices to show that $W_{-1}(N_1) = 0$.  
    	
    	Suppose $W_{-1}(N_1) \neq 0$, and let $k$ denote the weight of $\bV$. Define $l := \max\{l \mid W_{-l}(N_1) \neq 0\}$. Then $l \geq 1$. By \cite[\S 6.8]{Schnell2025}, the filtration $F^\bullet$ induces a pure Hodge structure of weight $m + k$ on $W_m(N_1)/W_{m-1}(N_1)$. Moreover, the map  
    	$$
    	N_1^l : W_l(N_1)/W_{l-1}(N_1) \to W_{-l}(N_1)/W_{-l-1}(N_1)
    	$$  
    	is an isomorphism of type $(-l, -l)$. Denote $S(\bV) = \cF^p$ for some $p$. By the definition of $l$, any nonzero element $\alpha \in W_{-l}(N_1)$ induces a nonzero class $[\alpha] \in W_{-l}(N_1)/W_{-l-1}(N_1)$ of Hodge type $(p, k-l-p)$. Since $N_1^l$ is an isomorphism, there exists $\beta \in W_l(N_1)/W_{l-1}(N_1)$ of Hodge type $(p+l, k-p)$ such that $N_1^l(\beta) = [\alpha]$. However, $\beta = 0$ because $F^{p+l} = 0$. This contradicts the fact that $[\alpha] \neq 0$. Consequently, $W_{-1}(N_1)$ must be zero. This completes the proof of the lemma.
    \end{proof}
    \begin{cor}\label{cor_W_F}
    	For every subset $I\subset\{1,\dots,r\}$, one has $$\big(j_\ast S(\bV)\cap{_{E}}H_{h_Q}\big)\cap W(\sum_{i\in I}N_i)_{-1}|_{\bf 0}\subset{_{<E}}H_{h_Q}|_{\bf 0}.$$
    \end{cor}
    \begin{proof}
    	Consider the family of curves 
    	$$\iota_{I,t}:\Delta\to U,\quad z\mapsto(\delta_{I,1}(z,t),\dots,\delta_{I,r}(z,t),0,\cdots,0),\quad t\in\Delta^\ast,$$
    	where \begin{align*}
    	\delta_{I,i}(z,t)=\begin{cases}
    		z, & \text{if  } i\in I \\t, & \text{otherwise }.
    		\end{cases}
    	\end{align*}
    	 Then the nilpotent part of the residue map of the Higgs field $\iota_{I,t}^\ast(H,\theta)$ is given by $\sum_{i\in I}N_i$. By applying Lemma \ref{lem_W_F} on $\iota_{I,t}^\ast(H,\theta)$, one has
    	$$\big(j_\ast S(\bV)\cap{_{E}}H_{h_Q}\big)\cap W(\sum_{i\in I}N_i)_{-1}\subset {_{<E}}H_{h_Q}$$
    	at each point $\iota_{I,t}(0)$ with $t\in\Delta^\ast$. Since the sheaves $j_\ast S(\bV)\cap{_{E}}H_{h_Q}$, $W(\sum_{i\in I}N_i)_{-1}$ and ${_{<E}}H_{h_Q}$ are locally free, taking $t\to 0$ yields that 
    	$$\big(j_\ast S(\bV)\cap{_{E}}H_{h_Q}\big)\cap W(\sum_{i\in I}N_i)_{-1}|_{\bf 0}\subset{_{<E}}H_{h_Q}|_{\bf 0}.$$
    \end{proof}
	\subsubsection{Twisted Koll\'ar-Saito's S-sheaf}\label{section_Twisted_Saito}
	In this section, we introduce a construction that combines Koll\'ar-Saito's $S_X(\bV)$ and the multiplier ideal sheaf associated with an $\bR$-divisor. Additionally, we generalize Koll\'ar-Saito's construction to complex variations of Hodge structure, without making assumptions on the local monodromy. This is interesting from the perspective of nonabelian Hodge theory, as complex variations of Hodge structure are precisely the $\bC^\ast$ fixed points on the moduli space of certain tame harmonic bundles (see \cite[Theorem 8]{Simpson1990}, \cite[Proposition 1.9]{Mochizuki2006}).
	
	Let $X$ be a projective variety and $X^o\subset X_{\rm reg}$ be a dense Zariski open subset. Let $\bV=(\cV,\nabla,\{\cV^{p,q}\},Q)$ be a polarized complex variation of Hodge structure on $X^o$. Let $h_Q$ be the Hodge metric associated with $Q$ and let $\left(H_{\bV}:=\bigcup_{E\in{\rm Div}_D(X)}{_E}H_{h_Q},\{{_E}H_{h_Q}\}_{E\in{\rm Div}_D(X)},\theta\right)$ be the $\mu_A$-polystable locally abelian parabolic Higgs bundle on $(X, D)$ associated with $\bV$.
	Let $N$ be an $\bR$-Cartier divisor on $X$. We define the coherent sheaf $S_X(\bV,-N)$ as follows.
	\begin{description}
		\item[Log smooth case] 
		Assuming that $X$ is smooth and $D:=X\backslash X^o$ is a simple normal crossing divisor with ${\rm supp}(N)\subset D$, we  denote the irreducible decomposition of $D$ as $D=\cup_{i=1}^lD_i$, and we  write $N=\sum_{i=1}^l{r_i}D_i$ where $r_1,\dots,r_l\in\bR$. For  $\bm{r}=(r_1,\dots,r_l)$, we let $\cV_{>\bm{r}-1}$ be the Deligne-Manin prolongation with indices $>\bm{r}-1$. This is a locally free $\sO_X$-module that extends $\cV$ in such a way that $\nabla$ induces a connection with logarithmic singularities given by
		$$\nabla:\cV_{>\bm{r}-1}\to\cV_{>\bm{r}-1}\otimes\Omega_X(\log D),$$
		where the real part of the eigenvalues of the residue of $\nabla$ along $D_i$ belongs to $(r_i-1,r_i]$ for each $i$. 
		We can now define $S_{X}(\bV,-N)$ as
		\begin{align}\label{align_S_X_P}
			S_{X}(\bV,-N)&:=\omega_X\otimes\left(j_\ast S(\bV)\cap\cV_{>\bm{r}-1}\right)\\\nonumber
			&\simeq \omega_X\otimes\left(j_\ast S(\bV)\cap{_{<D-N}}H_{h_Q}\right)\\\nonumber
			&\simeq \omega_X\otimes\big(P_{D-N,(2)}(H_{\bV})\cap j_\ast S(\bV)\big)\quad \textrm{(Corollary \ref{cor_W_F})}.
		\end{align}
	    Here $j:X^o\to X$ is the open immersion. 
		\item[General case] Let $\pi:\widetilde{X}\to X$ be a resolution of singularities such that $\pi^o:=\pi|_{\pi^{-1}(X^o\backslash{\rm supp}(N))}:\pi^{-1}(X^o\backslash{\rm supp}(N))\to X^o\backslash{\rm supp}(N)$ is biholomorphic and the exceptional loci $E:=\pi^{-1}((X\backslash X^o)\cup {\rm supp}(N)))$ is a simple normal crossing divisor. Then
		\begin{align*}
	    S_X(\bV,-N)\simeq\pi_\ast\left(S_{\widetilde{X}}(\pi^{o\ast}\bV,-\pi^\ast N)\right).
		\end{align*}
	\end{description}
	\begin{prop}\label{prop_S_X_L2}
		We have $S_{X}(\bV,-N)\simeq S_X(S(\bV),e^{-\varphi_N}h_Q)$, where $\varphi_N$ is a weight function associated with $N$. In particular, $S_{X}(\mathbb{V},-N)$ is a torsion free coherent sheaf on $X$ that is independent of the choice of the desingularization $\pi:\widetilde{X}\to X$.
	\end{prop}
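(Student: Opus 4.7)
The plan is to prove the proposition first in the log smooth case by combining the definition (\ref{align_S_X_P}) with the pointwise $L^2$-criterion of Proposition \ref{prop_P(2)_L2}, and then deduce the general case from the log smooth one by invoking the birational functoriality of the $L^2$-extension sheaf (Lemma \ref{lem_L2ext_birational}). First, assume that $X$ is smooth and $D:=X\setminus X^o$ is a simple normal crossing divisor containing $\mathrm{supp}(N)$. Starting from $S_X(\bV,-N)\simeq\omega_X\otimes(P_{D-N,(2)}(H_\bV)\cap j_\ast S(\bV))$ in (\ref{align_S_X_P}), I will unravel the isomorphism section by section: a local holomorphic $S(\bV)$-valued $(n,0)$-form $\alpha=\omega\otimes s$ on $U\cap X^o$ satisfies $\alpha\wedge_{e^{-\varphi_N}h_Q}\bar{\alpha}=|s|^2_{h_Q}e^{-2\varphi_N}\omega\wedge\bar{\omega}$, so $\alpha$ is locally $L^2$ with respect to $e^{-\varphi_N}h_Q$ if and only if $|s|^2_{h_Q}e^{-2\varphi_N}$ is locally integrable with respect to any Hermitian volume form on $X$. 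By Proposition \ref{prop_P(2)_L2} this is equivalent to $s\in P_{D-N,(2)}(H_\bV)$, and $s\in j_\ast S(\bV)$ by assumption; hence the two sheaves $\omega_X\otimes(P_{D-N,(2)}(H_\bV)\cap j_\ast S(\bV))$ and $S_X(S(\bV),e^{-\varphi_N}h_Q)$ have the same local sections.

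Next, for the general case, I will take the resolution $\pi:\widetilde{X}\to X$ used to define $S_X(\bV,-N)$. Since $\pi^o$ is biholomorphic over $X^o\setminus\mathrm{supp}(N)$, the pullback $\pi^{o\ast}\bV$ is a polarized complex variation of Hodge structure with $S(\pi^{o\ast}\bV)\simeq\pi^{o\ast}S(\bV)$ and Hodge metric $\pi^{o\ast}h_Q$, and any weight function $\varphi_{\pi^\ast N}$ on $\widetilde{X}$ differs from $\pi^\ast\varphi_N$ by a bounded function thanks to (\ref{align_weight_A}). Applying the log smooth case on $\widetilde{X}$ yields $S_{\widetilde{X}}(\pi^{o\ast}\bV,-\pi^\ast N)\simeq S_{\widetilde{X}}(\pi^{o\ast}S(\bV),e^{-\varphi_{\pi^\ast N}}\pi^{o\ast}h_Q)$, and pushing forward by $\pi$ together with Lemma \ref{lem_L2ext_birational} (using the bounded comparison of weights) gives $S_X(\bV,-N)\simeq S_X(S(\bV),e^{-\varphi_N}h_Q)$. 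Torsion-freeness and independence of the choice of $\pi$ then follow immediately, because the right-hand side is defined intrinsically on $X$ as a subsheaf of $j_\ast(\omega_{X^o}\otimes S(\bV))$, without reference to any resolution.

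The main point to monitor is not deep but requires care: the pulled-back polarized VHS $\pi^{o\ast}\bV$ must produce on $\widetilde{X}$ the same tame harmonic structure and parabolic Higgs extension as its intrinsic construction, and the replacement of $\varphi_N$ by any bounded perturbation (in particular, the discrepancy between $\varphi_{\pi^\ast N}$ and $\pi^\ast\varphi_N$) must not alter the associated $L^2$-sheaf. Both reduce to the naturality of the Simpson-Mochizuki correspondence under biholomorphisms and to the elementary observation (\ref{align_weight_A}) that weight functions are well-defined up to a bounded additive function, so the argument is essentially a bookkeeping reduction to the log smooth case.
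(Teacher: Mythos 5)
Your proof is correct and follows essentially the same route as the paper: identify the log smooth case with the $L^2$-sheaf via Proposition \ref{prop_P(2)_L2} (this is exactly Corollary \ref{cor_L2_interpretation} with $F=L=\sO_X$), then push forward along the resolution using Lemma \ref{lem_L2ext_birational}. Your extra remarks on the bounded discrepancy between $\varphi_{\pi^\ast N}$ and $\pi^\ast\varphi_N$, and on why the right-hand side is intrinsic to $X$, are details the paper leaves implicit but are correct and harmless.
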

	\begin{proof}
		Let $\pi:\widetilde{X}\to X$ be a resolution of singularities such that $\pi^o:=\pi|_{\pi^{-1}(X^o\backslash{\rm supp}(N))}:\pi^{-1}(X^o\backslash{\rm supp}(N))\to X^o\backslash{\rm supp}(N)$ is biholomorphic and the reduced exceptional loci $E:=\pi^{-1}((X\backslash X^o)\cup {\rm supp}(N)))$ is a simple normal crossing divisor. It follows from  (\ref{align_S_X_P}) that
		\begin{align*}
			S_{\widetilde{X}}(\pi^{o\ast}\bV,-\pi^\ast(N))&\simeq \omega_{\widetilde{X}}\otimes\big(P_{E-\pi^{\ast}N,(2)}(H_{\pi^{o\ast}\bV})\cap j_\ast S(\pi^{o\ast}\bV)\big)\\\nonumber
			&\simeq S_{\widetilde{X}}(S(\pi^{o\ast}\bV),e^{-\pi^\ast(\varphi_N)}\pi^{o\ast}h_Q)\quad(\textrm{Proposition \ref{prop_P(2)_L2}}),
		\end{align*}
	where $j:\widetilde{X}\backslash E\to\widetilde{X}$ is an immersion.
		Applying $\pi_\ast$ on both sides, we get 
		\begin{align*}
			S_{X}(\bV,-N)&\simeq\pi_\ast(S_{\widetilde{X}}(\pi^{o\ast}\bV,-\pi^\ast(N)))\\\nonumber
			&\simeq \pi_\ast(S_{\widetilde{X}}(S(\pi^{o\ast}\bV),e^{-\pi^\ast(\varphi_N)}\pi^{o\ast}h_Q))\\\nonumber
			&\simeq S_X(S(\bV),e^{-\varphi_N}h_Q)\quad \textrm{(Lemma \ref{lem_L2ext_birational})}.
		\end{align*}
	\end{proof}
	\begin{thm}
		Let $f:X\to Y$ be a proper surjective holomorphic morphism from a projective variety $X$ to a complex space $Y$. Let $\bV$ be a polarized complex variation of Hodge structure on some regular dense Zariski open subset $X^o\subset X$. Let $L$ be a line bundle on $X$ such that $L\simeq_\bR B+N$ where $B$ is a semi-positive $\bR$-Cartier divisor and $N$ is an $\bR$-Cartier divisor on $X$. Let $F$ be an arbitrary Nakano semi-positive vector bundle on $X$. Then $S_{X}(\bV,-N)\otimes F\otimes L$ satisfies Koll\'ar's package with respect to $f$.
	\end{thm}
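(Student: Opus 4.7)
The plan is to deduce the theorem from Theorem \ref{thm_main} in two stages: first the log smooth case, then descent from a resolution of singularities.

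\emph{Log smooth case.} Assume $X$ is smooth and $D := X \setminus X^o$ is a simple normal crossing divisor containing ${\rm supp}(N)$. I take $K := S(\bV)$, viewed as a locally free subsheaf of $H_\bV|_{X \setminus D}$ (the condition $\dbar(S(\bV)) = 0$ from the lemma at the end of \S\ref{section_CVHS} ensures this). That same lemma records exactly the two hypotheses of Theorem \ref{thm_main}: $\nabla^{0,1}(S(\bV)) = 0$ (holomorphicity) and $(\nabla - \theta)(S(\bV)) \subset S(\bV) \otimes \Omega_{X \setminus D}$ (weak transversality). Applying Theorem \ref{thm_main} with this choice of $K$ therefore yields Kollár's package for $\omega_X \otimes (P_{D-N,(2)}(H_\bV) \cap j_\ast S(\bV)) \otimes F \otimes L$, and by (\ref{align_S_X_P}) this sheaf is precisely $S_X(\bV, -N) \otimes F \otimes L$.

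\emph{General case via resolution.} Choose a resolution $\pi : \widetilde{X} \to X$ with $\widetilde{X}$ smooth, $\widetilde{D} := \pi^{-1}((X \setminus X^o) \cup {\rm supp}(N))$ a simple normal crossing divisor, and $\pi$ biholomorphic over $X^o \setminus {\rm supp}(N)$. The pulled-back data $(\pi^{o\ast}\bV,\, \pi^\ast N,\, \pi^\ast L,\, \pi^\ast F)$ falls into the log smooth case: $\pi^{o\ast}\bV$ is a polarized complex VHS on $\widetilde{X} \setminus \widetilde{D}$, $\pi^\ast L \simeq_\bR \pi^\ast B + \pi^\ast N$ with $\pi^\ast B$ still semi-positive, and $\pi^\ast F$ is still Nakano semi-positive. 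By the log smooth case, the sheaf $\widetilde{\cF} := S_{\widetilde{X}}(\pi^{o\ast}\bV, -\pi^\ast N) \otimes \pi^\ast F \otimes \pi^\ast L$ satisfies Kollár's package with respect to both morphisms $f \circ \pi : \widetilde{X} \to Y$ and $\pi : \widetilde{X} \to X$. Since $\dim \widetilde{X} - \dim X = 0$, the torsion-freeness statement applied to $\pi$ gives $R^q \pi_\ast \widetilde{\cF} = 0$ for $q > 0$; combined with the projection formula and Proposition \ref{prop_S_X_L2} this yields $R\pi_\ast \widetilde{\cF} \simeq S_X(\bV, -N) \otimes F \otimes L$, and hence $R(f\pi)_\ast \widetilde{\cF} \simeq Rf_\ast(S_X(\bV, -N) \otimes F \otimes L)$.

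From here each of the four statements of Kollár's package for $f \circ \pi$ translates directly into the corresponding statement for $f$: torsion freeness and the dimensional vanishing descend because $\dim \widetilde{X} = \dim X$; injectivity transfers because multiplication by a section $s$ of a semi-positive line bundle on $X$ intertwines with multiplication by $\pi^\ast s$ on $\widetilde{X}$; and the vanishing and decomposition theorems transfer verbatim through the isomorphism $R(f\pi)_\ast \widetilde{\cF} \simeq Rf_\ast(S_X(\bV, -N) \otimes F \otimes L)$. The main obstacle is exactly this descent step rather than any new analytic input: one must apply the log smooth case twice (once to $f \circ \pi$, once to $\pi$ itself), verify that semi-positivity and the polarized VHS structure are preserved under pull-back, and then carefully chain the projection formula with $R^q \pi_\ast \widetilde{\cF} = 0$ for $q > 0$. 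The analytic heart of the argument is already packaged into Theorem \ref{thm_main} via the meta Kollár's package (Theorem \ref{thm_abstract_Kollar_package}); what remains is purely functorial bookkeeping.
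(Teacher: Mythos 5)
Your proposal is correct and follows essentially the same route as the paper: reduce to the log smooth case via a resolution $\pi$, identify $S_{\widetilde{X}}(\pi^{o\ast}\bV,-\pi^\ast N)$ with $\omega_{\widetilde{X}}\otimes\bigl(P_{E-\pi^\ast N,(2)}(H_{\pi^{o\ast}\bV})\cap j_\ast S(\pi^{o\ast}\bV)\bigr)$ using (\ref{align_S_X_P}), apply the torsion-freeness part of Theorem \ref{thm_main} to $\pi$ itself to upgrade $\pi_\ast$ to $R\pi_\ast$, and then apply Theorem \ref{thm_main} to $f\circ\pi$ with $K=S(\bV)$, whose holomorphicity and weak transversality are exactly the content of the lemma at the end of \S\ref{section_CVHS}. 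The only cosmetic difference is that you state the log smooth case as a separate intermediate step, whereas the paper carries out both applications of Theorem \ref{thm_main} directly on $\widetilde{X}$.
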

	\begin{proof}
		Let $\pi:\widetilde{X}\to X$ be a resolution of singularities such that $\pi^o:=\pi|_{\pi^{-1}(X^o\backslash{\rm supp}(N))}:\pi^{-1}(X^o\backslash{\rm supp}(N))\to X^o\backslash{\rm supp}(N)$ is biholomorphic and the reduced exceptional loci $E:=\pi^{-1}((X\backslash X^o)\cup {\rm supp}(N)))$ is a simple normal crossing divisor. 
		Let $j: \widetilde{X}\backslash E \to \widetilde{X}$ be an immersion. Equation (\ref{align_S_X_P}) tells us that 
		\[S_{\widetilde{X}}(\pi^{o*}\mathbb{V},-\pi^* N)\simeq \omega_{\widetilde{X}}\otimes\big(P_{E-\pi^\ast N,(2)}(H_{\pi^{o\ast}\bV})\cap j_*S( \pi^{o*}\mathbb{V})\big).\]
		By applying Theorem \ref{thm_main} (torsion freeness) to $\pi:\widetilde{X}\rightarrow X$, we obtain
		\begin{align*}
			S_X(\mathbb{V},-N)\otimes F\otimes L&\simeq \pi_*\left(S_{\widetilde{X}}(\pi^{o*}\mathbb{V},-\pi^* N)\otimes\pi^* F\otimes\pi^* L\right)\\\nonumber
			&\simeq R\pi_*\left(S_{\widetilde{X}}(\pi^{o*}\mathbb{V},-\pi^* N)\otimes\pi^* F\otimes\pi^* L\right).
		\end{align*}
		Therefore
		$$Rf_\ast(S_{X}(\bV,-N)\otimes F\otimes L)\simeq R(f\circ\pi)_\ast\left(\omega_{\widetilde{X}}\otimes\big(P_{E-\pi^\ast N,(2)}(H_{\pi^{o\ast}\bV})\cap j_\ast S(\pi^{o\ast}\bV)\big)\otimes\pi^\ast F\otimes\pi^\ast L\right).$$
		By Lemma \ref{lem_SV_conditions},
		the theorem follows by applying Theorem \ref{thm_main} to $\omega_{\widetilde{X}}\otimes\big(P_{E-\pi^\ast N,(2)}(H_{\pi^{o\ast}\bV})\cap j_\ast S(\pi^{o\ast}\bV)\big)\otimes\pi^\ast F\otimes\pi^\ast L$ with respect to the morphism $f\circ\pi$.
	\end{proof}
\begin{rmk}
	Similar results hold for the $S$-sheaf twisted by a multiplier ideal sheaf associated with an ideal.
	Let $c > 0$ be a real number and $\mathfrak{a}$ be a coherent ideal sheaf on $X$. Consider a desingularization $\pi: \widetilde{X} \to X$ such that $\mathfrak{a}\sO_{\widetilde{X}}=\sO_{\widetilde{X}}(-E)$, where $E\geq0$ is a simple normal crossing divisor, and $\pi^\ast(X\backslash X^o)$ is a simple normal crossing divisor that intersects transversally with $E$.
	We define $S_X(\bV,-c\mathfrak{a})$ as $\pi_\ast(S_{\widetilde{X}}(\pi^\ast\bV,-cE))$. Theorem \ref{thm_main_CVHS} provides relevant results for $S_X(\bV,-c\mathfrak{a})$.
\end{rmk}
\subsection{Multiplier Grauert-Riemenschneider sheaf}\label{section_GR_multiplier}
When $\mathbb{V}=\mathbb{C}_{X_{\mathrm{reg}}}$ is the trivial variation of Hodge structure and $N$ is an $\mathbb{R}$-Cartier divisor on $X$, $S_X(\mathbb{C}_{X_{\mathrm{reg}}},-N)$ is precisely the Grauert-Riemenschneider sheaf twisted by the multiplier ideal sheaf associated with $N$ when $N\geq0$. This is referred to as the multiplier ideals by Viehweg \cite{Viehweg1995,Viehweg2010}, and it also appears in the Nadel vanishing theorem on complex spaces \cite{Demailly2012}. Let us briefly describe its construction for the convenience of the readers.
\begin{description}
	\item[Log smooth case]If $X$ is smooth and ${\rm supp}(N)$ is a simple normal crossing divisor, then
	$$\mathcal{K}_X(-N):=\omega_X\otimes\sO_X(-\lfloor N\rfloor).$$ 
	\item[General case] Let $\pi:\widetilde{X}\to X$ be a proper bimeromorphic morphism such that $\pi^o:=\pi|_{\pi^{-1}(X_{\rm reg}\backslash{\rm supp}(N))}:\pi^{-1}(X_{\rm reg}\backslash{\rm supp}(N))\to X_{\rm reg}\backslash{\rm supp}(N)$ is biholomorphic and the exceptional loci $E:=\pi^{-1}((X\backslash X_{\rm reg})\cup {\rm supp}(N)))$ is a simple normal crossing divisor. Then
	\begin{align*}
	\mathcal{K}_X(-N):=\pi_\ast\left(\mathcal{K}_{\widetilde{X}}(-\pi^\ast N)\right).
	\end{align*}
\end{description}
Certainly, $\mathcal{K}_X(-N)\simeq S_X(\mathbb{C}_{X_{\text{reg}}},-N)$. Proposition \ref{prop_S_X_L2} implies that $\mathcal{K}_X(-N)$ is independent of the choice of the desingularization. When $X$ is smooth and $N\geq0$, we have 
$$\mathcal{K}_X(-N)\simeq\omega_X\otimes\sI(-N)$$
where $\sI(-N)$ is the multiplier ideal sheaf associated with $N$.
According to Theorem \ref{thm_main_CVHS}, we have the following result.
\begin{thm}\label{thm_main_dualizing_sheaf_proof}
	Let $f:X\to Y$ be a proper surjective holomorphic morphism from a projective variety $X$ to a complex space $Y$. Let $L$ be a line bundle on $X$ such that $L\simeq_{\bR}B+N$ where $B$ is a semi-positive $\bR$-Cartier divisor and $N$ is an $\bR$-Cartier divisor on $X$. Let $F$ be an arbitrary Nakano semi-positive vector bundle on $X$. Then $\mathcal{K}_{X}(-N)\otimes F\otimes L$ satisfies Koll\'ar's package with respect to $f$.
\end{thm}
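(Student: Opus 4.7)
The plan is to deduce this as an immediate corollary of Theorem \ref{thm_main_CVHS} by specializing to the trivial polarized complex variation of Hodge structure $\bV = \bC_{X_{\rm reg}}$ on the regular locus. The key observation, already recorded in the preamble to Theorem \ref{thm_main_dualizing_sheaf_proof}, is the identification
$$\mathcal{K}_X(-N) \simeq S_X(\bC_{X_{\rm reg}}, -N).$$
Once this identification is in place, one simply feeds $\bV$, $L$, $N$, $B$, and $F$ directly into Theorem \ref{thm_main_CVHS} and reads off the conclusion.

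To verify the identification, I would follow the two cases in the construction of $S_X(\bV,-N)$ in \S\ref{section_Twisted_Saito}. In the log smooth case, the trivial connection $(\sO_X, d)$ plays the role of $\cV$; the Deligne--Manin prolongation $\cV_{>\bm{r}-1}$ with respect to $N=\sum r_i D_i$ coincides with $\sO_X(-\lfloor N\rfloor)$, and $S(\bV)=\sO_{X\backslash D}$, so that (\ref{align_S_X_P}) gives $S_X(\bV,-N) \simeq \omega_X \otimes \sO_X(-\lfloor N\rfloor) = \mathcal{K}_X(-N)$ exactly as in the log smooth definition of $\mathcal{K}_X(-N)$. The general case then matches by applying $\pi_\ast$ to a common log resolution and invoking the birational invariance granted by Proposition \ref{prop_S_X_L2} (which itself rests on Lemma \ref{lem_L2ext_birational}).

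With the identification established, the plan concludes in one line: apply Theorem \ref{thm_main_CVHS} to the tuple $(f, \bV=\bC_{X_{\rm reg}}, L, B, N, F)$, obtaining Koll\'ar's package for $S_X(\bV,-N)\otimes F\otimes L = \mathcal{K}_X(-N)\otimes F\otimes L$ with respect to $f$. There is no genuine obstacle here; the entire content has been absorbed into Theorem \ref{thm_main_CVHS}. If anything, the only point that merits care is ensuring that the trivial variation qualifies as a polarized complex variation of Hodge structure in the sense of Definition \ref{defn_CVHS} (with polarization $Q=\mathrm{id}$ in a single Hodge piece $\cV^{0,0}$), which is immediate.
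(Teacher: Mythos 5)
Your proposal is correct and follows the paper's own route exactly: the paper likewise reduces the statement to Theorem \ref{thm_main_CVHS} via the identification $\mathcal{K}_X(-N)\simeq S_X(\bC_{X_{\rm reg}},-N)$, which it records in \S\ref{section_GR_multiplier} by comparing the log smooth definitions and invoking birational invariance (Proposition \ref{prop_S_X_L2}). Your verification that the Deligne--Manin prolongation of the trivial connection is $\sO_X(-\lfloor N\rfloor)$ is the same computation the paper leaves implicit.
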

Theorem \ref{thm_main_dualizing_sheaf_proof} has an application in Koll\'ar's package of pluricanonical bundles.
	\begin{cor}
		Let $f:X\to Y$ be a proper surjective holomorphic morphism from a smooth projective variety $X$ to a complex space $Y$. Let $K_X$ be the canonical divisor of $X$ and $\omega_X=\sO_X(K_X)$. Suppose $A$ is a semi-positive line bundle on $X$ and $V\subset|km K_X-A|$ is a linear series for some positive integers $k$ and $m$.
		Let $F$ be an arbitrary Nakano semi-positive vector bundle on $X$. Then the tensor product $\omega_X^{\otimes (k+1)}\otimes\sI(\frac{1}{m}|V|)\otimes F$ satisfies Koll\'ar's package with respect to $f$.\\
	\end{cor}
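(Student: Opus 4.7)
The plan is to invoke Theorem \ref{thm_main_dualizing_sheaf_proof} with $L := \omega_X^{\otimes k}$, together with a carefully chosen effective $\bQ$-divisor $N$ and a semi-positive $\bR$-Cartier divisor $B$ satisfying $L \simeq_\bR B + N$ and $\sI(-N) \simeq \sI(\tfrac{1}{m}|V|)$. Producing this triple is the entire content of the reduction; once done, the theorem applies directly.

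First, fix an integer $p$ sufficiently large and pick generic members $D_1,\ldots, D_p \in V$. Working on a log resolution $\pi:X'\to X$ of the base ideal $\mathfrak{b}(V)$ with $\pi^{-1}\mathfrak{b}(V)\cdot\sO_{X'}=\sO_{X'}(-E)$ for a simple normal crossing divisor $E$, one writes $\pi^\ast D_i = E + F_i$, where the $F_i$ are smooth divisors transverse to $E$ (Bertini applied to the base-point-free moving part on $X'$). For $p$ large enough that $\tfrac{1}{mp}$ times the maximum multiplicity appearing among the $F_i$'s is strictly less than $1$, the round-down satisfies $\lfloor \tfrac{1}{m}E+\tfrac{1}{mp}\sum F_i\rfloor=\lfloor \tfrac{1}{m}E\rfloor$, so
$$\sI\!\left(\tfrac{1}{mp}(D_1+\cdots+D_p)\right)\;=\;\sI\!\left(\tfrac{1}{m}\mathfrak{b}(V)\right)\;=\;\sI\!\left(\tfrac{1}{m}|V|\right).$$

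Now set $N:=\tfrac{1}{mp}\sum_{i=1}^p D_i$ (an effective $\bQ$-Cartier divisor) and $B:=\tfrac{1}{m}A$, which is a semi-positive $\bR$-Cartier divisor in the sense of Definition \ref{defn_semipositive_divisor} because $A$ is a semi-positive line bundle. Since each $D_i \sim km K_X - A$, a direct computation gives
$$B+N\;\simeq_\bR\;\tfrac{1}{m}A+\tfrac{1}{mp}\cdot p(kmK_X-A)\;=\;kK_X\;\simeq_\bR\; L,$$
verifying the key hypothesis of Theorem \ref{thm_main_dualizing_sheaf_proof}. Because $X$ is smooth and $N\geq 0$, we have $\mathcal{K}_X(-N)\simeq \omega_X\otimes\sI(-N)$, and therefore
$$\mathcal{K}_X(-N)\otimes F\otimes L\;\simeq\;\omega_X^{\otimes(k+1)}\otimes\sI\!\left(\tfrac{1}{m}|V|\right)\otimes F,$$
which is precisely the sheaf in the corollary. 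Theorem \ref{thm_main_dualizing_sheaf_proof} then yields Koll\'ar's package with respect to $f$.

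The proof is essentially a bookkeeping reduction: the analytic content already resides in Theorem \ref{thm_main_dualizing_sheaf_proof}, and the multiplier-ideal identity $\sI(\tfrac{1}{mp}\sum D_i)=\sI(\tfrac{1}{m}|V|)$ is a routine log-resolution computation independent of the non-abelian Hodge machinery. Consequently no serious obstacle is anticipated; the only mild point requiring care is choosing $p$ large enough that all rounded contributions from the generic sections vanish, which is immediate once $\pi$ and the $F_i$ are fixed.
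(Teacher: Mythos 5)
Your proof is correct, but it takes a genuinely different route from the paper's. The paper never realizes $\sI(\tfrac{1}{m}|V|)$ as the multiplier ideal of a divisor on $X$ itself: it passes to a principalization $\pi:\widetilde{X}\to X$ of the base ideal $\mathfrak{a}$ of $V$, writes $\pi^\ast(kmK_X-A)=B+E$ with $B$ semi-ample, applies Theorem \ref{thm_main_dualizing_sheaf} once to $\pi$ (torsion freeness, so that $\pi_\ast\simeq R\pi_\ast$ on $\omega_{\widetilde{X}}\otimes L\otimes\sI(\tfrac1m E)\otimes\pi^\ast F$) and then a second time to $f\circ\pi$. You instead stay on $X$: choosing $p$ general members $D_1,\dots,D_p\in V$ and setting $N=\tfrac{1}{mp}\sum D_i$, $B=\tfrac1m A$, you use the standard identity $\sI(\tfrac{1}{mp}\sum D_i)=\sI(\tfrac1m|V|)$ to reduce to a single direct application of Theorem \ref{thm_main_dualizing_sheaf} to $f$. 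Both arguments are sound. Your version buys a shorter reduction with no birational bookkeeping on the target of $f$, at the cost of importing the general-members comparison for multiplier ideals of linear series (Lazarsfeld's $\sI(c|V|)=\sI(\tfrac{c}{p}\sum D_i)$ for $p>c$ and general $D_i$), which is itself proved by exactly the log-resolution computation you sketch; the paper's version keeps everything internal to its own machinery and mirrors the pushforward pattern it uses for $S_X(\bV,-N)$. Two small points of hygiene in your write-up: $B=\tfrac1m A$ should really be $\tfrac1m A'$ for a divisor $A'$ with $\sO_X(A')\simeq A$ (semi-positivity of $\tfrac1m A'$ then follows from the paper's remark that $L$ is semi-positive iff $(s)$ is), and the threshold on $p$ is simply $\tfrac{1}{mp}<1$ since general members are reduced, so $p\geq 2$ always suffices.
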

	\begin{proof}
		Let $\mathfrak{a}$ be the ideal sheaf of the base scheme of $V$. Consider a principalization $\pi:\widetilde{X}\to X$ of the ideal sheaf $\mathfrak{a}$, which satisfies the condition that $\mathfrak{a}\sO_{\widetilde{X}}=\sO_{\widetilde{X}}(-E)$, where $E$ is a $\pi$-exceptional divisor with $E\geq0$.
		From this, we have $\pi^\ast(kmK_X-A)=B+E$, where $B$ is a semi-ample divisor. Let $L = \pi^*(\omega_X^{\otimes k})$. Then we have $L \simeq_{\mathbb{Q}} \frac{1}{m}(\pi^\ast A+B) + \frac{1}{m}E$. By the functoriality of multiplier ideal sheaves, we obtain
		$$\omega_X^{\otimes (k+1)}\otimes \sI(\frac{1}{m}|V|)\otimes F\simeq\pi_\ast(\omega_{\widetilde{X}}\otimes L\otimes \sI(\frac{1}{m}E)\otimes\pi^{\ast}F).$$
		Notice that $\frac{1}{m}(\pi^\ast A+B)$ is semi-positive. By applying Theorem \ref{thm_main_dualizing_sheaf} to $\pi$, we have
		\begin{align*}
			\omega_X^{\otimes (k+1)}\otimes \sI(\frac{1}{m}|V|)\otimes F \simeq\pi_\ast(\omega_{\widetilde{X}}\otimes L\otimes \sI(\frac{1}{m}E)\otimes\pi^{\ast}F) \simeq R\pi_\ast(\omega_{\widetilde{X}}\otimes L\otimes \sI(\frac{1}{m}E)\otimes\pi^{\ast}F).
		\end{align*}
		Therefore, we have
		$$Rf_\ast(\omega_X^{\otimes (k+1)}\otimes \sI(\frac{1}{m}|V|)\otimes F) \simeq R(f\circ\pi)_\ast\big(\omega_{\widetilde{X}}\otimes L\otimes \sI(\frac{1}{m}E)\otimes\pi^{\ast}F\big).$$
		Therefore, the corollary follows by applying Theorem \ref{thm_main_dualizing_sheaf} to $\omega_{\widetilde{X}}\otimes L\otimes \sI(\frac{1}{m}E)\otimes\pi^{\ast}F$ with respect to the morphism $f\circ\pi$.
	\end{proof}
	\subsection{Semi-positivity of higher direct images}
	Let $Y$ be a smooth projective variety and $F$ be a torsion free sheaf on $Y$. We say that $F$ is weakly positive (in the sense of Viehweg \cite{Viehweg1983}) on some Zariski open subset $U\subset X$ if, for every ample line bundle $A$ on $Y$ and every $a\in\bZ_{>0}$, there exists $b\in\bZ_{>0}$ such that $S^{[ab]}F\otimes A^b$ is generated by global sections at each point of $U$. In this context, $S^{[ab]}F$ refers to the reflexive hull of the symmetric power $S^{ab}F$. We can say that $F$ is \emph{weakly positive} if it is weakly positive over some Zariski open subset. As noted in \cite[Remark 1.3]{Viehweg1983}, it is sufficient to check this definition for a fixed line bundle $A$ (which does not necessarily need to be ample, as seen in \cite[Lemma 2.14]{Viehweg1995}).
	
	Notations as in Theorem \ref{thm_main}. Let $x\in D$ be a point. Let $D_1,\dots,D_r$ be the components of $D$ that contains $x$. For every $E\in{\rm Div}_D(X)$ and every $i=1,\dots,r$ let $N_r\in {\rm End}({_{E}}H/{_{<E}}H|_{D_i})$ be the nilpotent part of the residue map ${\rm Res}_{D_i}(\theta)$ of the Higgs field $\theta$. For every $I\subset\{1,\dots,r\}$, let $W(\sum_{i\in I}N_i)_\bullet$ be the monodromy weight filtration associated with $\sum_{i\in I}N_i$. 
	\begin{defn}
		Let $K\subset H|_{X\backslash D}$ be a locally free subsheaf. We say that $K$ \emph{has non-negative weight at $x$} if 
		$$(_{E}H\cap j_\ast(K))|_x\cap W(\sum_{i\in I}N_i)_{-1}|_x=0$$
		for every $E\in{\rm Div}_D(X)$ and every $I\subset \{1,\dots,r\}$. $K$  \emph{has non-negative weight along $D$} if it has non-negative weight at every point of $D$.
	\end{defn}
    There are two examples of $K$ that has non-negative weight along $D$. The first example is presented in Example \ref{example_parabolic_bundle}, where the weight filtrations are trivial due to the vanishing of the Higgs fields. The other is example \ref{example_twisted_Ssheaf}, wherein $H$ arises from a polarized complex variation of Hodge structure $\bV$ on \( X \backslash D \), and \( K = S(\bV) \) represents the top non-zero Hodge bundle. In this context, Corollary \ref{cor_W_F} demonstrates that \( S(\bV) \) has non-negative weight along $D$.
	\begin{thm}
		Notations as in Theorem \ref{thm_main}. Assume that $K$ has non-negative weight along $D$. If $f:X\to Y$ is a surjective morphism between smooth projective varieties, then $R^qf_\ast(\omega_{X/Y}\otimes(P{_{D-N,(2)}}(H)\cap j_\ast K)\otimes F\otimes L)$ is weakly positive.
	\end{thm}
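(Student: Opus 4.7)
The plan is to apply Viehweg's classical fiber product trick in conjunction with the Koll\'ar package provided by Theorem~\ref{thm_main}. Write $\mathcal{M} := (P_{D-N,(2)}(H) \cap j_* K) \otimes F \otimes L$ and $\mathcal{G}_q := R^q f_*(\omega_{X/Y} \otimes \mathcal{M})$; note that $\mathcal{G}_q$ is torsion-free since $\mathcal{G}_q \otimes \omega_Y = R^q f_*(\omega_X \otimes \mathcal{M})$ is torsion-free by Theorem~\ref{thm_main}. By Viehweg's criterion \cite{Viehweg1983}, it suffices to exhibit an ample line bundle $A$ on $Y$ and an integer $b_0 \geq 1$ such that $\hat{S}^{ab_0}(\mathcal{G}_q) \otimes A^{b_0}$ is generically globally generated for every $a \geq 1$.

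For each integer $s \geq 1$, form the $s$-fold fiber product $X^s_Y := X \times_Y \cdots \times_Y X$, let $\tau: \widetilde{X^s} \to X^s_Y$ be a desingularization that is an isomorphism over the open locus $U \subset Y$ on which $f$ is smooth, set $\pi := f^s \circ \tau$, and denote by $\rho_i: \widetilde{X^s} \to X$ the composition of $\tau$ with the $i$-th projection. The central step is to equip $\widetilde{X^s}$ with data to which Theorem~\ref{thm_main} applies: (a) a $\mu$-polystable locally abelian parabolic Higgs bundle $(H^{(s)}, \theta^{(s)})$ with vanishing parabolic Chern classes, produced via Theorem~\ref{thm_Simpson-Mochizuki_harmonic_metric} from the tame harmonic bundle formed as the external tensor product of $\rho_i^*(H|_{X\backslash D}, \theta, h)$; (b) the locally free subsheaf $K^{(s)} := \rho_1^* K \otimes \cdots \otimes \rho_s^* K$ on the preimage of $X \setminus D$, whose holomorphicity and weak transversality conditions are inherited factorwise from $K$; (c) the Nakano semi-positive bundle $F^{(s)} := \rho_1^* F \otimes \cdots \otimes \rho_s^* F$, since pullback and tensor product preserve Nakano semi-positivity; and (d) the line bundle $L^{(s)} := \rho_1^* L \otimes \cdots \otimes \rho_s^* L$, whose required $\mathbb{R}$-linear decomposition into a semi-positive plus a boundary-supported part follows from that of $L$ after absorbing the $\tau$-exceptional components into an enlarged simple normal crossing divisor $D^{(s)}$.

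With this data Theorem~\ref{thm_main} applied to $\pi: \widetilde{X^s} \to Y$ supplies Koll\'ar's package for $\mathcal{H}^{(s)}_\ell := R^\ell \pi_*\big(\omega_{\widetilde{X^s}} \otimes (P_{D^{(s)}-N^{(s)},(2)}(H^{(s)}) \cap j^{(s)}_* K^{(s)}) \otimes F^{(s)} \otimes L^{(s)}\big)$ for every $\ell$. Fix a very ample $A$ on $Y$: Koll\'ar's vanishing gives $H^i(Y, \mathcal{H}^{(s)}_\ell \otimes A^k) = 0$ for $i > 0$ and $k \geq 1$, and Castelnuovo-Mumford regularity then yields an integer $m_0$ depending only on $A$ and $\dim Y$ (in particular independent of $s$ and $\ell$) such that $\mathcal{H}^{(s)}_\ell \otimes A^{m_0}$ is globally generated on $Y$. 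Over $U$, flat base change, the K\"unneth formula for the smooth morphism $X^s_Y|_U \to U$, and the Koll\'ar decomposition for $f$ (yielding $Rf_*(\omega_X \otimes \mathcal{M}) \simeq \bigoplus_q (\mathcal{G}_q \otimes \omega_Y)[-q]$ in $D(Y)$) together show that $\mathcal{H}^{(s)}_{sq}|_U$ contains the K\"unneth summand $(\mathcal{G}_q \otimes \omega_Y|_U)^{\otimes s}$ corresponding to $q_1 = \cdots = q_s = q$. Post-composing with the surjection $\mathcal{G}_q^{\otimes s} \twoheadrightarrow \hat{S}^s(\mathcal{G}_q)$ gives that $\hat{S}^s(\mathcal{G}_q) \otimes (\omega_Y \otimes A^{m_0})$ is generically globally generated for every $s \geq 1$. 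Choosing $b_0$ large enough that $A^{b_0 - m_0} \otimes \omega_Y^{-1}$ is globally generated and applying this with $s = a b_0$ verifies Viehweg's criterion.

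The principal technical obstacle is item (a): constructing the $\mu_A$-polystable locally abelian parabolic Higgs bundle on $\widetilde{X^s}$ with vanishing parabolic Chern classes. Here the Simpson-Mochizuki correspondence is essential--one verifies that tameness and harmonicity are preserved under pullback along the $\rho_i$ and under external tensor products of tame harmonic bundles, that the harmonic metric extends across the $\tau$-exceptional divisor in a manner controlled by the existing norm estimates (Theorem~\ref{thm_tame_estimate}), and that the resulting object is compatible with the polarization on $\widetilde{X^s}$. Once this is settled, the remaining steps (K\"unneth, vanishing, and regularity) are formal, and Viehweg's weak positivity criterion concludes the proof.
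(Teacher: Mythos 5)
Your overall strategy coincides with the paper's: Viehweg's fiber--product trick, a K\"unneth/flat base change argument identifying $\bigotimes^{s}R^qf_*(\omega_X\otimes\mathcal{M})$ with a direct summand of the corresponding higher direct image from the fiber product over the smooth locus, and Castelnuovo--Mumford regularity fed by Koll\'ar vanishing. The genuine divergence is at the step you yourself single out as ``the principal technical obstacle'': you propose to re-enter Theorem \ref{thm_main} on the resolved fiber product, which forces you to manufacture a $\mu$-polystable locally abelian parabolic Higgs bundle with vanishing parabolic Chern classes on $\widetilde{X^s}$ via the Simpson--Mochizuki correspondence. The paper never does this. It applies the \emph{meta} package, Theorem \ref{thm_abstract_Kollar_package}, directly to the $L^2$-extension sheaf $S_{X^{(m)}}\bigl(\mathcal{E}^{(m)},h_{\mathcal{E}^{(m)}}\bigr)$, where $\mathcal{E}^{(m)}$ is the pullback of $\bigotimes_i p_i^*\mathcal{E}$ and $h_{\mathcal{E}^{(m)}}$ the pullback of the tensor-product metric; the only hypotheses needed are Nakano semi-positivity on a Zariski open set and tameness (Definition \ref{defn_tame_Hermitian_bundle}), both of which are manifestly stable under pullback and tensor product given Lemma \ref{lem_Nsp_tame}. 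This sidesteps entirely the question of whether polystability, local abelianness, and vanishing of parabolic Chern classes survive the passage to $\widetilde{X^s}$ --- which is exactly the part of your argument you leave unresolved. Your route can in principle be completed (pullbacks and external tensor products of tame harmonic bundles remain tame harmonic, and Theorem \ref{thm_Simpson-Mochizuki_harmonic_metric} then returns a polystable parabolic Higgs bundle), but as written it is a gap, and it buys you nothing: the meta package was designed precisely so that only the metric-level properties need to be propagated. Two smaller omissions worth noting: the paper first blows up so that $f^*\Delta_Y\cup D$ is simple normal crossing (using Lemma \ref{lem_L2ext_birational} and torsion-freeness to descend), and it carefully constructs the comparison map via the trace morphism $\rho_*(\omega_{X^{(m)}/Y})\to\omega_{X^{[m]}/Y}$ together with the inclusion $S_{X^{(m)}}(\mathcal{E}^{(m)},h_{\mathcal{E}^{(m)}})\subset\omega_{X^{(m)}}\otimes\mathcal{E}^{(m)}$, checking via Corollary \ref{cor_L2_interpretation} that the composite is an isomorphism over the log-smooth locus; your appeal to K\"unneth alone glosses over why the sheaf on the resolution generically surjects onto the relevant summand.
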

	\begin{proof}
		Let us denote the sheaf $(P_{D-N,(2)}(H)\cap j_{\ast}K)\otimes F\otimes L$ by $\sE$ for simplicity. Additionally, let $A_Y$ be a line bundle on $Y$ that is very ample. Our goal is to demonstrate that, for every $m>0$, the sheaf
		$$S^{[m]}R^qf_{\ast}(\omega_{X/Y}\otimes \sE)\otimes \omega_Y\otimes A_Y^{\dim Y+1}$$
		is generated by global sections over a Zariski open subset of $Y$.
		
		\emph{Step 1:}
		Let $\Delta_Y$ be the set of critical values of $f$. By performing possibly blowing-ups $\rho:X'\to X$, we can assume that $\rho^{\ast}(f^\ast\Delta_Y)\cup \rho^\ast(D)$ forms a divisor with simple normal crossings. Using Lemma \ref{lem_L2ext_birational} and Corollary \ref{cor_L2_interpretation}, we have:
		$$\omega_X\otimes\sE\simeq \rho_\ast\left(\omega_{X'}\otimes (P_{\rho^\ast(D-N),(2)}(\rho^\ast H)\cap j'_\ast(\rho^\ast K))\otimes \rho^\ast F\otimes\rho^\ast L\right)$$
		where $j':X'\backslash\rho^\ast D\to X'$ is the open immersion. By applying Theorem \ref{thm_main} (torsion freeness) to the morphism $\rho:X'\to X$, we can conclude that
		$$\omega_X\otimes\sE\simeq R\rho_\ast\left(\omega_{X'}\otimes (P_{\rho^\ast(D-N),(2)}(\rho^\ast H)\cap j'_\ast(\rho^\ast K))\otimes \rho^\ast F\otimes\rho^\ast L\right).$$
		Therefore, there exists an isomorphism:
		$$R^q(f\circ\rho)_\ast\left(\omega_{X'}\otimes (P_{\rho^\ast(D-N),(2)}(\rho^\ast H)\cap j'_\ast(\rho^\ast K))\otimes \rho^\ast F\otimes\rho^\ast L\right)\simeq R^qf_\ast(\omega_X\otimes\sE)$$
		for every $q$. As a result, we can assume that $f^\ast(\Delta_Y)\cup D$ forms a divisor with a simple normal crossing on $X$ for the rest of the proof. Specifically, we assume that $f$ is flat in codimension $1$ on $Y$.
		
		\emph{Step 2: Viehweg's trick.}
		We denote the main component of the $m$-th fiber product $X\times_Y\cdots\times_YX$ by $X^{[m]}$, and let $f^{[m]}:X^{[m]}\to Y$ denote the projection map. Let $D^{[m]}:=\sum_{i=1}^{m} p_i^\ast(D)$, where $p_i:X^{[m]}\to X$ is the projection map to the $i$-th component. Consider the largest Zariski open subset $U\subset Y$ where $(X,D)$ is log smooth over $U$. Then, $f^{[m]}:(X^{[m]},D^{[m]})\to Y$ is log smooth over $U$. Let $X^{[m]o}:=(f^{[m]})^{-1}(U)$.
		Let $\pi:X^{(m)}\to X^{[m]}$ be a resolution of singularities such that $\pi^\ast(X^{[m]}\backslash X^{[m]o})$ is a simple normal crossing divisor, and $\pi$ is an isomorphism on $X^{(m)o}:=\pi^\ast(X^{[m]o})$.
		
		According to Corollary \ref{cor_locally_free}, $\sE$ is a locally free $\sO_X$-module. Let $hh_Fh_L$ be the singular Hermitian metric on $\sE$ as introduced in \S\ref{section_setting}. Given that $K$ has non-negative weight along $D$, it follows from Theorem \ref{thm_tame_estimate} that there exists a $C^\infty$ Hermitian metric $h_0$ on $\sE$ such that $h_0 \lesssim hh_Fh_L$.
		We define $\sE^{[m]}:=\otimes_{i=1}^{m}p_i^\ast\sE$ and $\sE^{(m)}:=\pi^{\ast}\sE^{[m]}$. Let $V\subset Y$ be a Zariski open subset such that $R^qf_\ast(\omega_X\otimes \sE)$ is locally free on $V$ for every $q\geq0$, and $f$ is flat over $V$. Since $R^qf_\ast(\omega_X\otimes \sE)$ is torsion free for each $q\geq0$ (Theorem \ref{thm_main}), we can assume that $Y\backslash V$ is of codimension $\geq 2$ for simplicity.
		
		Since $f:X\to Y$ is a Gorenstein morphism, we can conclude that $f^{[m]}:X^{[m]}\to Y$ is also a Gorenstein morphism. Moreover, there exists an isomorphism
		\begin{align*}
			\omega_{X^{[m]}/Y}\simeq \bigotimes_{i=1}^{m} p_i^\ast\omega_{X/Y}
		\end{align*}
	    over $f^{[m]\ast}(V)$,
		where $\omega_{X^{[m]}/Y}$ is the relative dualizing sheaf. Additionally, there is an injective morphism given by the trace map:
		\begin{align} \label{align_trace_map}
			{\rm tr}:\rho_\ast(\omega_{X^{(m)}/Y})\to \omega_{X^{[m]}/Y}.
		\end{align}
		By applying the flat base change theorem, we obtain an isomorphism in the derived category $D(V)$:
		\begin{align*}
			Rf^{[m]}_\ast(\omega_{X^{[m]}/Y}\otimes \sE^{[m]})|_V\simeq \bigotimes^{m}Rf_\ast(\omega_X\otimes \sE)|_V.
		\end{align*}
		Since every $R^qf_\ast(\omega_X\otimes \sE)|_V$ is locally free, it follows that $\bigotimes^{m}R^qf_\ast(\omega_X\otimes \sE)|_V$ is isomorphic to a direct summand of $R^{mq}f^{[m]}_\ast(\omega_{X^{[m]}/Y}\otimes \sE^{[m]})|_V$.
		Hence, we have a surjective morphism
		\begin{align*}
			R^{mq}f^{[m]}_\ast(\omega_{X^{[m]}/Y}\otimes \sE^{[m]})|_V\to \bigotimes^{m}R^qf_\ast(\omega_X\otimes \sE)|_V.
		\end{align*}
		By taking reflexive hulls, we obtain a morphism:
		\begin{align}\label{align_map_alpha}
			\alpha: R^{mq}f^{[m]}_\ast(\omega_{X^{[m]}/Y}\otimes \sE^{[m]})\to \bigotimes^{[m]}R^qf_\ast(\omega_X\otimes \sE)\to S^{[m]}R^qf_\ast(\omega_X\otimes \sE),
		\end{align}
		which is surjective over $V$. Here, $\otimes^{[m]}R^qf_\ast(\omega_X\otimes \sE)$ denotes the reflexive hull of $\otimes^{m}R^qf_\ast(\omega_X\otimes \sE)$.
		
	Let $h_{\sE^{[m]}}=\otimes_{i=1}^{m}p_i^\ast (hh_Fh_L)$, $h^{[m]}_0=\otimes_{i=1}^{m}p_i^\ast (h_0)$, $h_{\sE^{(m)}}=\rho^\ast(h_{\sE^{[m]}})$, and $h^{(m)}_0=\rho^\ast(h^{[m]}_0)$. Then $h^{(m)}_0$ is a smooth Hermitian metric on $\sE^{(m)}$, and $h_{\sE^{(m)}}$ is a singular Hermitian metric on $\sE^{(m)}$ satisfying $h^{(m)}_0\lesssim h_{\sE^{(m)}}$. Therefore, we have the following inclusion:
		\begin{align*}
			S_{X^{(m)}}(\sE^{(m)},h_{\sE^{(m)}})\subset S_{X^{(m)}}(\sE^{(m)},h^{(m)}_0)\simeq\omega_{X^{(m)}}\otimes\sE^{(m)}.
		\end{align*}
		By combining this with the trace map (\ref{align_trace_map}), we obtain an injective morphism:
		\begin{align}\label{align_inj}
			\beta:\rho_\ast(S_{X^{(m)}}(\sE^{(m)},h_{\sE^{(m)}}))\to \omega_{X^{[m]}/Y}\otimes\sE^{[m]}\otimes f^{[m]\ast}(\omega_Y).
		\end{align}
		Since $hh_Fh_L$ is Nakano semi-positive over some Zariski open subset of $X$ and tame on $X$ (Lemma \ref{lem_Nsp_tame}), $h_{\sE^{(m)}}$ is Nakano semi-positive over some Zariski open subset of $X^{(m)}$ and tame on $X^{(m)}$. Applying Theorem \ref{thm_abstract_Kollar_package}, we have:
		$$\rho_\ast(S_{X^{(m)}}(\sE^{(m)},h_{\sE^{(m)}}))\simeq R\rho_\ast(S_{X^{(m)}}(\sE^{(m)},h_{\sE^{(m)}})).$$
		Therefore, (\ref{align_inj}) induces a morphism:
		\begin{align}\label{align_beta'}
			\beta':R^{mq}f^{(m)}_\ast(S_{X^{(m)}}(\sE^{(m)},h_{\sE^{(m)}}))\to R^{mq}f^{[m]}_\ast(\omega_{X^{[m]}/Y}\otimes \sE^{[m]})\otimes\omega_Y,
		\end{align}
		where $f^{(m)}$ denotes $f^{[m]}\circ\rho$.
	By applying Corollary \ref{cor_L2_interpretation} on $X^{[m]o}$, it can be seen that $\beta$ is also an isomorphism over $X^{[m]o}=(f^{[m]})^{-1}(U)$. Therefore, $\beta'$ is an isomorphism over $U$. By combining (\ref{align_beta'}) with (\ref{align_map_alpha}), we obtain a morphism
	\begin{align}\label{align_Viehweg_map}
		R^{mq}f^{(m)}_\ast(S_{X^{(m)}}(\sE^{(m)},h_{\sE^{(m)}}))\to S^{[m]}R^qf_\ast(\omega_X\otimes \sE)\otimes\omega_Y
	\end{align}
	which is surjective over $U\cap V$.
	
		\emph{Step 3: Castelnuovo-Mumford's criterion.} 
		Applying Theorem \ref{thm_abstract_Kollar_package} to $S_{X^{(m)}}(\sE^{(m)},h_{\sE^{(m)}})$ yields the following result:
		$$H^i(Y,R^{mq}f^{(m)}_\ast(S_{X^{(m)}}(\sE^{(m)},h_{\sE^{(m)}}))\otimes A_Y^{\dim Y+1}\otimes A_Y^{-i})=0,\quad\text{for all } i>0.$$
		This means that $R^{mq}f^{(m)}_\ast(S_{X^{(m)}}(\sE^{(m)},h_{\sE^{(m)}}))\otimes A_Y^{\dim Y+1}$ is $0$-regular and therefore generated by global sections. Combining this with the generic surjective map (\ref{align_Viehweg_map}), we can conclude that 
		$S^{[m]}R^qf_\ast(\omega_{X/Y}\otimes \sE)\otimes\omega_Y\otimes A_Y^{\dim Y+1}$
		is generated by global sections at every point of $V\cap U$. This demonstrates that $R^qf_\ast(\omega_{X/Y}\otimes \sE)$ is weakly positive.
	\end{proof}
	\subsection{Generic vanishing theorem}
	\begin{defn}
		Let $A$ be an abelian variety. A coherent sheaf $F$ on $A$ is called a \emph{GV-sheaf} if 
		$${\rm codim}_{{\rm Pic}^0(A)}\left\{M\in{\rm Pic}^0(A)\mid H^i(A,F\otimes M)\neq0\right\}\geq i$$
		for every $i$.
	\end{defn}
Let us recall Hacon's criterion of a GV-sheaf \cite{Hacon2004} (see also \cite[Theorem 25.5]{Schnell_GV} and \cite{Popa2011}).
\begin{lem}\label{lem_GV_lemma}
	Suppose that for every finite \'etale morphism $\varphi:B\to A$ of abelian varieties, and every ample line bundle $L$ on $B$, one has $H^i(B,\varphi^*(F)\otimes L)=0$ for $i>0$. Then $F$ is a GV-sheaf.
\end{lem}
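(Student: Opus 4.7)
My plan is to prove Hacon's criterion via the Fourier-Mukai machinery on abelian varieties. Let $\mathcal{P}$ be the normalized Poincar\'e bundle on $A\times\hat{A}$, and let $R\widehat{S}:D^b(A)\to D^b(\hat{A})$ denote the associated Fourier-Mukai functor $M\mapsto Rp_{\hat{A},\ast}(p_A^\ast M\otimes\mathcal{P})$. By the Pareschi-Popa characterization of GV-sheaves (building on Mukai's inversion theorem), a coherent sheaf $F$ on $A$ is a GV-sheaf if and only if the complex $R\widehat{S}(R\Delta F)$ is concentrated in cohomological degree $g=\dim A$, where $R\Delta F := R\mathcal{H}om_{\mathcal{O}_A}(F,\mathcal{O}_A)$. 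So the goal reduces to showing $R^i\widehat{S}(R\Delta F)=0$ for every $i<g$.

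The first step is to reformulate the hypothesis on the Fourier-Mukai side. Fixing an isogeny $\varphi:B\to A$ and applying the hypothesis with $L\otimes P$ for $P\in\mathrm{Pic}^0(B)$ (which remains ample), one sees that $\varphi^\ast F\otimes L$ is IT$_0$ for every ample line bundle $L$ on $B$, meaning $R^i\widehat{S}_B(\varphi^\ast F\otimes L)=0$ for $i>0$. Invoking Mukai's theorem that $R\widehat{S}_B(L)$ is a vector bundle in degree $0$ of rank $h^0(L)$ for ample $L$, together with a convolution/Leray spectral sequence comparing $R\widehat{S}_B(\varphi^\ast F\otimes L)$ with $R\widehat{S}_B(\varphi^\ast F)$ and $R\widehat{S}_B(L)$, one extracts cohomological concentration for $R\widehat{S}_B(\varphi^\ast F)$ itself.

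The second step transfers this concentration to $R\widehat{S}_A(F)$ through the compatibility of Fourier-Mukai with isogeny pullback: the identity $(\varphi\times\mathrm{id})^\ast\mathcal{P}_A\simeq(\mathrm{id}\times\hat{\varphi})^\ast\mathcal{P}_B$ for the dual isogeny $\hat{\varphi}:\hat{A}\to\hat{B}$ yields intertwining relations between $R\widehat{S}_A$ and $R\widehat{S}_B$ via $\varphi^\ast$ and $R\hat{\varphi}_\ast$. As $\varphi$ ranges over a cofinal family of isogenies, the dual maps $\hat{\varphi}$ jointly detect every stalk on $\hat{A}$, forcing the desired vanishing for $R\widehat{S}_A(F)$. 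Finally, Mukai's compatibility of $R\widehat{S}$ with Grothendieck-Serre duality, of the form $R\widehat{S}\circ R\Delta_A\simeq (-1_A)^\ast R\Delta_{\hat{A}}\circ R\widehat{S}[g]$, converts this into $R^i\widehat{S}(R\Delta F)=0$ for $i<g$, which is the Pareschi-Popa criterion and hence gives that $F$ is GV.

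The main obstacle is the derived-category bookkeeping: both the translation of the IT$_0$ property of $\varphi^\ast F\otimes L$ into a vanishing statement for $R\widehat{S}_B(\varphi^\ast F)$ itself, and the duality identity relating $R\widehat{S}$ to $R\Delta$, involve shifts by $g$, the involution $(-1_A)^\ast$, and subtle base-change compatibilities that must all align. Once these formal identities are in place, the GV conclusion is a direct application of the Pareschi-Popa characterization.
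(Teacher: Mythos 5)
The paper offers no proof of this lemma: it is quoted from Hacon \cite{Hacon2004} (see also \cite{Popa2011} and \cite[Theorem 25.5]{Schnell_GV}), so the only comparison available is with the arguments in those references, which do indeed run through the Fourier--Mukai machinery and the Pareschi--Popa characterization you invoke. Your outline is therefore in the right ballpark, but as written it contains a step that is false. In Step 1 you claim that the IT$_0$ property of $\varphi^\ast F\otimes L$ for all ample $L$ lets you ``extract cohomological concentration for $R\widehat{S}_B(\varphi^\ast F)$ itself.'' This cannot be right: $F=\sO_A$ satisfies the hypothesis of the lemma (since $H^i(B,L)=0$ for $i>0$ when $L$ is ample on an abelian variety), yet $R\widehat{S}_B(\sO_B)$ is a skyscraper sitting in degree $g$, not $0$; worse, $F=\sO_A\oplus L_0$ with $L_0$ ample also satisfies the hypothesis and its transform has nonzero cohomology in both degrees $0$ and $g$, so no concentration statement in any single degree is available. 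The underlying confusion is that being GV is not a vanishing or concentration property of $R\widehat{S}(F)$ at all --- it is the codimension bound ${\rm codim}\,{\rm Supp}\,R^i\widehat{S}(F)\geq i$, equivalently the concentration of $R\widehat{S}(R\Delta F)$ in degree $g$ --- so Step 2, which asks the dual isogenies to ``detect every stalk'' and force ``the desired vanishing for $R\widehat{S}_A(F)$,'' is aiming at the wrong target and does not connect to the criterion you correctly stated at the outset.

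The missing idea is the choice of isogeny: one does not quantify over all $\varphi$ and pass to a cofinal family, one applies the hypothesis to the specific finite \'etale isogenies $\phi_L:\widehat{A}\to A$ attached to ample line bundles $L$ on $\widehat{A}$. Mukai's computation shows that $\phi_L^\ast\widehat{L^{-1}}$ (where $\widehat{L^{-1}}=R^g\widehat{S}(L^{-1})$ is the vector bundle on $A$ obtained from the IT$_g$ sheaf $L^{-1}$) is a direct sum of copies of the ample line bundle $L$, and the trace splitting of $F\otimes\widehat{L^{-1}}$ off $\phi_{L\ast}\phi_L^\ast\bigl(F\otimes\widehat{L^{-1}}\bigr)$ exhibits $H^i\bigl(A,F\otimes\widehat{L^{-1}}\bigr)$ as a direct summand of a sum of copies of $H^i\bigl(\widehat{A},\phi_L^\ast F\otimes L\bigr)$, which vanishes for $i>0$ by hypothesis. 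The lemma then reduces to the theorem of Hacon and Pareschi--Popa that this vanishing for ample $L$ on $\widehat{A}$ is equivalent to $F$ being GV; that equivalence is where $R\widehat{S}(R\Delta F)$, the exchange with $R\Delta$, and the shift by $g$ legitimately enter. If you want a self-contained proof you should restructure the argument around these two points; in its current form it would not go through.
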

	\begin{thm}
		Notations as in Theorem \ref{thm_main}. Let $f:X\to A$ be a morphism to an abelian variety. Then $R^qf_\ast(\omega_X\otimes(P{_{D-N,(2)}}(H)\cap j_\ast K)\otimes F\otimes L)$ is a GV-sheaf for every $q\geq0$. As a consequence, 
		$${\rm codim}_{{\rm Pic}^0(A)}\left\{M\in{\rm Pic}^0(A) \mid H^i(X,\omega_X\otimes(P{_{D-N,(2)}}(H)\cap j_\ast K)\otimes F\otimes L\otimes f^\ast M)\neq0\right\}$$
		$$\geq i-(\dim X-\dim f(X))$$
		for all $i$.
	\end{thm}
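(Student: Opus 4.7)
The plan is to check Hacon's criterion (Lemma~\ref{lem_GV_lemma}) for each $R^qf_\ast\sF$, where $\sF:=\omega_X\otimes(P_{D-N,(2)}(H)\cap j_\ast K)\otimes F\otimes L$, and then extract the codimension estimate from the decomposition part of Koll\'ar's package. Fix a finite \'etale morphism $\varphi:B\to A$ and an ample line bundle $M$ on $B$, form the fiber product $X':=X\times_A B$ with projections $\varphi':X'\to X$ (finite \'etale) and $f':X'\to B$, and pull back all the data: $D':=\varphi'^{-1}(D)$, $H':=\varphi'^\ast H$, $\theta':=\varphi'^\ast\theta$, $K':=\varphi'^\ast K$, $N':=\varphi'^\ast N$, $F':=\varphi'^\ast F$, $L':=\varphi'^\ast L$.

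\'Etale pullback preserves $\mu_A$-polystability with vanishing parabolic Chern classes, the locally abelian parabolic structure, Nakano semipositivity of $F$, semipositivity of the $\bR$-divisor $B$ in the decomposition $L\simeq_\bR B+N$, as well as the two conditions $\nabla^{0,1}(K)=0$ and $(\nabla-\theta)(K)\subset K\otimes\sA^{1,0}_{X\backslash D}$. Since $P_{E,(2)}$ is defined locally from the parabolic filtration and the monodromy weight filtration of residues of $\theta$ (all of which commute with \'etale pullback), one obtains $\varphi'^\ast\sF\simeq\omega_{X'}\otimes(P_{D'-N',(2)}(H')\cap j'_\ast K')\otimes F'\otimes L'$. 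Flat base change along $\varphi$ yields $\varphi^\ast R^qf_\ast\sF\simeq R^qf'_\ast(\varphi'^\ast\sF)$ (if $f$ is not surjective, one factors through the closed image so that Theorem~\ref{thm_main} still applies, and the projection formula for the closed immersion reduces everything to the corresponding statement on $B$). Applying Theorem~\ref{thm_main} to $f'$ and the pulled-back data supplies Koll\'ar's package for $\varphi'^\ast\sF$, and its vanishing theorem with respect to the ample bundle $M$ on $B$ yields $H^i(B,R^qf'_\ast(\varphi'^\ast\sF)\otimes M)=0$ for $i>0$. Hacon's criterion is therefore satisfied, and each $R^qf_\ast\sF$ is a GV-sheaf on $A$.

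For the codimension bound, the decomposition theorem in Koll\'ar's package applied to $\sF$ and $f$ gives $Rf_\ast\sF\simeq\bigoplus_q R^qf_\ast\sF[-q]$. Tensoring with $f^\ast M$ for $M\in{\rm Pic}^0(A)$ and using the projection formula,
$$H^i(X,\sF\otimes f^\ast M)\simeq\bigoplus_q H^{i-q}(A,R^qf_\ast\sF\otimes M).$$
The torsion freeness in Koll\'ar's package forces $R^qf_\ast\sF=0$ for $q>\dim X-\dim f(X)$, so only $q\leq\dim X-\dim f(X)$ contribute. Combined with the GV-property of each $R^qf_\ast\sF$, every nonvanishing summand on the right imposes that $M$ lies in a locus of codimension at least $i-q\geq i-(\dim X-\dim f(X))$ in ${\rm Pic}^0(A)$, which gives the asserted bound.

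The main technical point to verify is the compatibility of the sheaf $P_{D-N,(2)}(H)\cap j_\ast K$ and of the tame, Nakano-semipositive singular metric built in Lemma~\ref{lem_Nsp_tame} with the \'etale base change $\varphi'$. This is routine, because \'etale maps are local analytic isomorphisms and the parabolic filtration, the weight filtration, and the norm estimate of Theorem~\ref{thm_tame_estimate} are purely local analytic data; nevertheless it is the only step where one must explicitly track the local model of the parabolic Higgs bundle under pullback, so I would spell it out carefully before quoting Theorem~\ref{thm_main}.
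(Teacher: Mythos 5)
Your proposal is correct and follows essentially the same route as the paper: verify Hacon's criterion via étale base change of all the parabolic/Higgs data along $X\times_A B\to X$ and the vanishing theorem from Theorem \ref{thm_main}, then deduce the codimension bound from the decomposition (degenerate Leray spectral sequence) together with the torsion-freeness vanishing $R^qf_\ast=0$ for $q>\dim X-\dim f(X)$. Your explicit remark about the non-surjective case and the compatibility of $P_{D-N,(2)}$ with étale pullback is a point the paper treats more tersely, but it is the same argument.
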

	\begin{proof}
		Let $\varphi:B\to A$ be a finite \'etale morphism between abelian varieties. We define $Z:=X\times_AB$. Then, we have the following commutative diagram:
		\begin{align*}
			\xymatrix{
				Z\ar[r]^\psi\ar[d]^g & X \ar[d]^f \\
				B\ar[r]^\varphi & A
			}.
		\end{align*}
		Since $\psi$ is \'etale, $D':=\psi^\ast(D)$ is a reduced simple normal crossing divisor on $Z$. Moreover, we have the following isomorphism:
		\begin{align*}
			\psi^\ast(P_{D-N,(2)}(H)\cap j_\ast K) \simeq P_{D'-\psi^\ast N,(2)}(\psi^\ast H)\cap j'_\ast \psi^\ast K,
		\end{align*}
		where $j':Z\backslash \psi^\ast(D)\to Z$ is the open immersion. This implies that there is an isomorphism:
		\begin{align*}
			&\varphi^\ast R^qf_\ast(\omega_X\otimes(P_{D-N,(2)}(H)\cap j_\ast K)\otimes F\otimes L)\\\nonumber
			\simeq &R^qg_\ast(\omega_Z\otimes(P_{D'-\psi^\ast N,(2)}(\psi^\ast H)\cap j'_\ast \psi^\ast K)\otimes \psi^\ast F\otimes \psi^\ast L).
		\end{align*}
		Consequently, using Theorem \ref{thm_main}, the conditions in Lemma \ref{lem_GV_lemma} are satisfied.
		
		This establishes the first claim of the theorem. The second claim can be derived by applying the first claim to the spectral sequence:
		$$E_2^{p,q}:=H^q(A,R^pf_\ast(\omega_X\otimes(P{_{D-N,(2)}}(H)\cap j_\ast K)\otimes F\otimes L)\otimes M)$$
	$$\Rightarrow H^{p+q}(X,\omega_X\otimes(P{_{D-N,(2)}}(H)\cap j_\ast K)\otimes F\otimes L\otimes f^\ast M)$$
		where $M\in \operatorname{Pic}^0(A)$.
		It should be noted that $R^pf_\ast(\omega_X\otimes(P_{D-N,(2)}(H)\cap j_\ast K)\otimes F\otimes L)=0$ for every $p>\dim X-\dim f(X)$, as mentioned in Theorem \ref{thm_main}.
	\end{proof}
	\bibliographystyle{plain}
	\bibliography{CGM_Kollar}
	
\end{document}